\definecolor{linkred}{rgb}{0.75,0,0}
\definecolor{linkblue}{rgb}{0,0,0.75}
\theoremstyle{plain}
\newtheorem{theorem}{Theorem}
\newtheorem{proposition}{Proposition}[section]
\newtheorem{lemma}[proposition]{Lemma}
\newtheorem{corollary}[proposition]{Corollary}
\theoremstyle{definition}
\newtheorem{example}[proposition]{Example}
\newtheorem{definition}[proposition]{Definition}
\newtheorem{remark}[proposition]{Remark}
\newcommand {\dd}{\mathrm{d}}
\newcommand{\cc}{\mathcal{C}}
\newcommand{\ch}{\mathcal{H}}
\newcommand{\ck}{\mathcal{K}}
\newcommand{\bc}{\mathbb{C}}
\newcommand{\bp}{\mathbb{P}}
\newcommand{\br}{\mathbb{R}}
\newcommand{\bz}{\mathbb{Z}}
\newcommand{\modm}{\mathcal{M}}
\newcommand{\fat}{\mathcal{F}\hspace{-.3mm}{\rm at}_{g,n}}
\begin{document}

\title{Pruned Hurwitz numbers}
\author{Norman Do \and Paul Norbury}
\address{School of Mathematical Sciences, Monash University, Australia 3800}
\address{Department of Mathematics and Statistics, The University of Melbourne, Victoria 3010, Australia}
\email{\href{mailto:norm.do@monash.edu}{norm.do@monash.edu}, \href{mailto:pnorbury@ms.unimelb.edu.au}{pnorbury@ms.unimelb.edu.au}}

\thanks{The authors were partially supported by the Australian Research Council grants DP1094328 (PN) and DE130100650 (ND)}
\subjclass[2010]{14N10; 05A15; 32G15}
\date{\today}
\begin{abstract}
We define a new Hurwitz problem which is essentially a small core of the simple Hurwitz problem.  The corresponding Hurwitz numbers have simpler formulae, satisfy effective recursion relations and determine the simple Hurwitz numbers.  We also apply this idea of finding a smaller simpler enumerative problem to orbifold Hurwitz numbers and Belyi Hurwitz numbers.
\end{abstract}

\maketitle

\setcounter{tocdepth}{1}
\tableofcontents

\section{Introduction}

In 1891 Hurwitz \cite{HurRie} introduced the problem of enumerating connected branched covers of $\mathbb{CP}^1$ up to isomorphism  with simple ramification over $m$ fixed points in $\mathbb{CP}^1$ and ramification given by a partition $\mu=(\mu_1, \mu_2, \ldots, \mu_n)$ over $\infty\in\mathbb{CP}^1$.  The Riemann-Hurwitz formula shows that the genus of the cover satisfies $m=m(g,\mu)=2g-2+n+|\mu|$ where $|\mu|=\mu_1+\mu_2+\ldots+\mu_n$. Hurwitz described the following equivalent factorisation problem in the symmetric group $S_{|\mu|}$.  We say that a product $\sigma_1\cdot \sigma_2\cdots\sigma_m$ in $S_{|\mu|}$ is {\em transitive} if the collection $\{\sigma_1, \sigma_2, \ldots, \sigma_m\}$ acts transitively on the set  $\{1,2,...,|\mu|\}$.   Given $\mu$, choose $T\in S_{|\mu|}$ of cycle type $\mu=(\mu_1, \mu_2, \ldots, \mu_n)$ and define $H_{g,n}(\mu)$ to be the number of transitive factorisations of $T$ by $m$ transpositions:
\begin{equation}  \label{factor}
\sigma_1\cdot \sigma_2\cdots\sigma_m=T.
\end{equation}
The number $H_{g,n}(\mu)$ is independent of the choice of $T$.  

This problem was studied further by Hurwitz \cite{HurAnz}, Goulden-Jackson \cite{GJaTra} and Ekedahl-Lando-Shapiro-Vainshtein \cite{ELSVHur} where it was shown that:
\begin{equation}  \label{hur}
 \frac{H_{g,n}(\mu)}{m(g,\mu)!}=\prod_{i=1}^n\frac{\mu_i^{\mu_i}}{\mu_i!}P_{g,n}(\mu_1,...,\mu_n)
 \end{equation}
for some polynomial $P_{g,n}(\mu_1,...,\mu_n)$.   The celebrated ELSV formula determines the polynomials $P_{g,n}$ using Hodge integrals over the moduli space of stable curves $\overline{\modm}_{g,n}$.  See Section~\ref{sec:bg}.  

Each number in the set $\{1,2,...,|\mu|\}$ appears in at least one of the factors $\sigma_i$ of  \eqref{factor} since the collection $(\sigma_1, \sigma_2, \cdots ,\sigma_m)$ acts transitively on the set.  In this paper we introduce a new Hurwitz problem with one further condition.  
\begin{definition}
Define the {\em pruned} simple Hurwitz number $K_{g,n}(\mu)$ to be the number of transitive factorisations \eqref{factor} of any $T$ of shape $\mu$ into $m=2g-2+n+|\mu|$ transpositions so that each number $\{1,2,...,|\mu|\}$ appears in at least two of the factors $\sigma_i$.
\end{definition} 
The seemingly innocuous extra condition of each number appearing in at least two factors brings further deep structure to the problem.   The pruned simple Hurwitz number count is a subset of the simple Hurwitz number count and the biggest surprise is that it is extremely well-behaved, and in many ways better behaved than simple Hurwitz numbers.  The pruning condition can also be understood in terms of branched coverings and the word {\em pruned} refers to a graphical description of simple Hurwitz numbers  described in Section~\ref{sec:bg}.   Pruned simple Hurwitz numbers essentially define the core of simple Hurwitz numbers having a formula which is a vast simplification of the formula \eqref{hur} for simple Hurwitz numbers. 
\begin{theorem}  \label{th:main}
The pruned simple Hurwitz numbers satisfy: 
\begin{itemize}
\item[(i)] $K_{g,n}(\mu_1, \mu_2, \ldots, \mu_n)/(2g-2+n+|\mu|)!$ is a polynomial function of the $\mu_i$.
\item[(ii)] $K_{g,n}(\mu_1, \mu_2, \ldots, \mu_n)$ satisfies an effective recursion.
\item[(iii)] $K_{g,n}(\mu_1, \mu_2, \ldots, \mu_n)$ determines and is determined by $H_{g,n}(\mu_1, \mu_2, \ldots, \mu_n)$.
\end{itemize}
\end{theorem}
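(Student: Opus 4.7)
The plan is to prove the three parts in the order (iii), (i), (ii), since (i) and (ii) follow most cleanly once an explicit relation between the simple and pruned counts is in hand.

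For (iii), the idea is to set up a \emph{leaf-pruning} correspondence on factorizations. Call an element $k \in \{1,\ldots,|\mu|\}$ a \emph{leaf} of a factorization $\sigma_1\cdots\sigma_m = T$ if it appears in exactly one transposition $\sigma_i = (k,j)$. Since every other $\sigma_\ell$ fixes $k$, excising $\sigma_i$ and the label $k$ produces a shorter transitive factorization on $\{1,\ldots,|\mu|\}\setminus\{k\}$ of some new target $T'$ whose cycle type is obtained from $\mu$ by a simple local modification (shortening the cycle containing $k$). Iterating until no leaves remain produces a pruned factorization. Summing over all ways to insert leaves back yields an identity of the schematic form
\begin{equation*}
H_{g,n}(\mu) = \sum_{\nu} B(\mu,\nu)\, K_{g',n'}(\nu),
\end{equation*}
where $B(\mu,\nu)$ is an explicit combinatorial coefficient counting insertion schemes and $\nu$ ranges over partitions reachable from $\mu$ by successive leaf removal. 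Because leaf insertion strictly increases $|\mu|$, the transformation is triangular in a natural partial order, so it is invertible and (iii) follows.

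For (i), polynomiality of $K_{g,n}(\mu)/(2g-2+n+|\mu|)!$ is then a consequence of inverting this identity and using the polynomiality of the corresponding quotient for $H_{g',n'}(\nu)$ up to the factor $\prod \nu_i^{\nu_i}/\nu_i!$ from formula~\eqref{hur}. The $\nu_i^{\nu_i}/\nu_i!$ prefactors must be absorbed into the insertion coefficients $B(\mu,\nu)$, after which the expression collapses to a polynomial in $\mu_1,\ldots,\mu_n$. For (ii), I would derive an effective recursion directly from the combinatorics of pruned factorizations by examining the last transposition $\sigma_m$: it either joins two cycles of $\sigma_1\cdots\sigma_{m-1}$ or splits one in two. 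Because the pruning condition forbids leaves, the ensuing case analysis is substantially simpler than the classical cut-and-join for $H_{g,n}$, yielding a recursion that expresses $K_{g,n}(\mu)$ in terms of pruned numbers with strictly smaller $2g-2+n+|\mu|$. Alternatively, the cut-and-join equation for $H_{g,n}$ can be transported through the correspondence of (iii).

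The main obstacle will be the combinatorial bookkeeping in (iii). Leaves can be pruned in many orders and removing one can create new ones, so making the leaf-insertion coefficient $B(\mu,\nu)$ explicit (or at least manifestly triangular) requires careful analysis of how cycle types and insertion positions interact. One must also verify that transitivity is preserved throughout leaf removal, so that the inductive structure genuinely decouples the two enumerations rather than mixing in disconnected contributions. Once this is controlled, parts (i) and (ii) are natural consequences, and the simpler combinatorial nature of the pruned problem (each label used at least twice) becomes the source of the effectiveness and polynomiality asserted in the theorem.
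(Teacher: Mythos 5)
Your overall architecture matches the paper's: a leaf-pruning correspondence relating $H_{g,n}$ to $K_{g,n}$ (triangular, hence invertible, giving (iii)), combined with the ELSV formula to get (i), and a separate recursion for (ii). Two points are essentially right: the correspondence does fix $(g,n)$ and only lowers each $\mu_i$ (so you can drop the primes on $g',n'$), and triangularity does give invertibility. However, there are two genuine gaps.

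First, for (ii) your plan fails as stated. You propose to examine the last transposition $\sigma_m$ and claim the case analysis is ``substantially simpler'' because pruned factorizations have no leaves. The problem is the opposite: the pruning condition is \emph{not preserved} by deleting $\sigma_m$ --- removing one transposition can turn other labels into leaves, so the shorter factorization is generally not pruned and the recursion does not close on the numbers $K_{g,n}$. The paper's fix is to delete $\sigma_m$ and then iteratively delete all resulting leaves; the deleted edges form a path of some length $\gamma$ (or $\beta$), which is why the recursion involves sums over $\alpha+\beta=\mu_i+\mu_j+1$ and $\alpha+\beta+\gamma=\mu_i+1$ with weights $\beta$, $\gamma$ and label-distribution factors $\tfrac{(m-1)!}{(m-\beta)!}$, etc. There is a further overcounting subtlety (configurations where one face surrounds another) that forces the exclusion of terms involving $\widehat{K}_{0,2}$; your sketch would miss this. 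Transporting the classical cut-and-join through the correspondence of (iii) is likewise nontrivial and is not how an \emph{effective} recursion (one not involving $K_{g,n}(\mu)$ on both sides) is obtained.

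Second, for (i) the phrase ``the expression collapses to a polynomial'' is precisely the missing content. You need the insertion coefficient $B(\mu,\nu)$ in closed form --- the paper shows, via a generalized Cayley count of rooted forests, that reconstructing a face of perimeter $\mu_i$ from a pruned face of perimeter $\nu_i$ contributes $\mu_i^{\mu_i-\nu_i}/(\mu_i-\nu_i)!$ after normalization --- and then a lemma asserting that the triangular system
\[
\frac{\mu^{\mu+d+1}}{\mu!} \;=\; \sum_{\nu=1}^{\mu} q_d(\nu)\,\frac{\nu\,\mu^{\mu-\nu}}{(\mu-\nu)!}
\]
has solutions $q_d$ that are polynomials of degree $2d$ (they turn out to be Stirling numbers of the second kind). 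Only with this lemma does inverting the correspondence against the ELSV prefactor $\prod \mu_i^{\mu_i+1}/\mu_i!$ produce a polynomial; nothing in the general shape of a triangular system guarantees polynomial output. Your plan correctly identifies where the difficulty lies but does not supply the two ingredients that resolve it.
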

The recursion in (ii) of Theorem~\ref{th:main} is given explicitly by Proposition~\ref{th:cutandjoin} in Section~\ref{sec:prusim}.  It is not simply the restriction of the cut-and-join recursion for simple Hurwitz numbers because the pruned Hurwitz condition---that each number $\{1,2,...,|\mu|\}$ appears in at least two of the factors $\sigma_i$ of $\mu$---is not preserved under the cut-and-join operations.   The coefficients of top degree terms of $K_{g,n}(\mu_1, \mu_2, \ldots, \mu_n)/m!$ are intersection numbers over $\overline{\modm}_{g,n}$ and this together with the recursion gives a new proof of the Witten--Kontsevich theorem.  The relation (iii) between $K_{g,n}(\mu_1, \mu_2, \ldots, \mu_n)$ and $H_{g,n}(\mu_1, \mu_2, \ldots, \mu_n)$ is given explicitly by \eqref{th:pruning} in Section~\ref{sec:prusim}.    The following example demonstrates the simplification of the formulae for $K_{g,n}(\mu_1, \mu_2, \ldots, \mu_n)$ over $H_{g,n}(\mu_1, \mu_2, \ldots, \mu_n)$.
\begin{example}
For $T\in S_{|\mu|}$ of shape $\mu=(\mu_1,\mu_2,\mu_3)$ consider transitive factorisations by transpositions $\sigma_1\cdot\sigma_2\cdots\sigma_{|\mu|+1}=T$  corresponding to genus 0 branched covers.   The two formulae are
$$H_{0,3}(\mu_1,\mu_2,\mu_3)=(|\mu|+1)!\prod_{i=1}^3\frac{\mu_i^{\mu_i+1}}{\mu_i!},\quad K_{0,3}(\mu_1,\mu_2,\mu_3)=(|\mu|+1)!\mu_1\mu_2\mu_3$$
and the latter is simpler.
\end{example}

For any $a\in\bz^+$ {\em orbifold} Hurwitz numbers $H^{[a]}_{g,n}(\mu)$ are defined as follows.  Given $\mu$, choose $T\in S_{|\mu|}$ of cycle type $\mu=(\mu_1, \mu_2, \ldots, \mu_n)$ and define $H^{[a]}_{g,n}(\mu)$ to be the number of transitive factorisations of $T$ by $m=2g-2+n+\frac{|\mu|}{a}$ transpositions and an element $\sigma_0$ of shape $(a,a,...a)$:
\begin{equation}  \label{factora}
\sigma_0\cdot\sigma_1\cdot \sigma_2\cdots\sigma_m=T.
\end{equation}
In particular $|\mu|$ must be divisible by $a$ to get a non-zero count.  Such factorisations correspond to branched covers of $\bp^1$ and the Riemann-Hurwitz formula in this case shows that the genus of the cover satisfies $m=2g-2+n+\frac{|\mu|}{a}$.  The factor $\sigma_0$ in \eqref{factora} defines a colouring of $\{1,2,...,|\mu|\}$ where each of the $\frac{|\mu|}{a}$ cycles of $\sigma_0$ is given a distinct colour.  Each colour appears in at least one of the transposition factors $\sigma_i$, $i>0$ of  \eqref{factora} since the collection $(\sigma_0,\sigma_1, \sigma_2, \cdots ,\sigma_m)$ acts transitively on $\{1,2,...,|\mu|\}$.  We can generalise the pruning condition as follows.  
\begin{definition}
Define the {\em pruned} orbifold Hurwitz number $K^{[a]}_{g,n}(\mu)$ to be the number of transitive factorisations \eqref{factora} of any $T$ of shape $\mu$ into $m=2g-2+n+|\mu|$ transpositions and an element $\sigma_0$ of shape $(a,a,...a)$ so that each colour---determined by $\sigma_0$---appears in at least two of the transposition factors $\sigma_i$, $i>0$.
\end{definition} 
When $a=1$, this reduces to the pruned simple Hurwitz numbers.  Theorem~\ref{th:main} is a special case of the following theorem.
\begin{theorem}  \label{th:maina}
The pruned orbifold Hurwitz numbers satisfy: 
\begin{itemize}
\item[(i)] $K^{[a]}_{g,n}(\mu_1, \mu_2, \ldots, \mu_n)/(2g-2+n+\tfrac{|\mu|}{a})!$ is a polynomial function of the $\mu_i$.
\item[(ii)] $K^{[a]}_{g,n}(\mu_1, \mu_2, \ldots, \mu_n)$ satisfies an effective recursion.
\item[(iii)] $K^{[a]}_{g,n}(\mu_1, \mu_2, \ldots, \mu_n)$ determines and is determined by $H^{[a]}_{g,n}(\mu_1, \mu_2, \ldots, \mu_n)$.
\end{itemize}
\end{theorem}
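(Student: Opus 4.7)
My approach is parallel to the three-part strategy used for Theorem~\ref{th:main}, of which Theorem~\ref{th:maina} is the natural generalisation; the colouring of $\{1,\ldots,|\mu|\}$ into cycles of $\sigma_0$ is inserted at each step of the argument. I would first establish (iii), then deduce (i) from it together with the known polynomial structure of $H^{[a]}_{g,n}$, and finally prove the effective recursion (ii) by adapting a cut-and-join analysis to respect the pruning condition.

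For (iii), the main tool is an explicit \emph{pruning map}. Call a colour $c$ (a cycle of $\sigma_0$) \emph{pruneable} in a factorisation \eqref{factora} if it appears in exactly one transposition $\sigma_i$; transitivity then forces $\sigma_i = (x,y)$ with $x$ of colour $c$ and $y$ of a different colour. Removing $\sigma_i$ from the word together with the cycle of $\sigma_0$ corresponding to $c$ yields a smaller orbifold factorisation whose shape is obtained by decreasing parts of $\mu$, accompanied by combinatorial data recording the position of $\sigma_i$ in the word, the value of $y$, and the internal structure of the removed $a$-cycle. Iterating the pruning and assembling the bijection produces a triangular expansion of $H^{[a]}_{g,n}(\mu)$ as a sum of values $K^{[a]}_{g,n'}(\mu')$ with coefficients polynomial in $\mu$, which is invertible and expresses $K^{[a]}_{g,n}$ in terms of $H^{[a]}_{g,n}$.

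Part (i) then follows by substituting the polynomial structure of $H^{[a]}_{g,n}(\mu)/m!$---established via the orbifold ELSV formula of Johnson--Pandharipande--Tseng---into the inversion from (iii). Since the combinatorial coefficients arising from the leaf data are themselves polynomial in $\mu$, the leaf prefactors combine with those carried by $H^{[a]}_{g,n}$ to leave a polynomial $K^{[a]}_{g,n}(\mu)/m!$.

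For (ii), I would derive a cut-and-join-type recursion by peeling off the last transposition $\sigma_m$ and analysing its effect on the pruning condition. As already noted for the simple case following Theorem~\ref{th:main}, the main obstacle is that the pruning condition is not preserved under cut-and-join: removing $\sigma_m$ can render a previously pruned colour pruneable. The remedy is to split the recursion into cases indexed by which colours become pruneable upon removal of $\sigma_m$, and to introduce correction terms that can themselves be expressed in pruned quantities via the structure developed in (iii). In the orbifold setting, $\sigma_m$ involves two indices of possibly distinct colours, so the case analysis is more intricate than for $a=1$; ensuring closure of the resulting recursion on pruned Hurwitz numbers is the principal difficulty of the proof.
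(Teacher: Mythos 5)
Your overall architecture (pruning correspondence for (iii), ELSV plus inversion for (i), a modified cut-and-join for (ii)) matches the paper's, and your transposition-level pruning map is the symmetric-group shadow of the paper's removal of essential-degree-one vertices from $a$-fold branching graphs. However, there are two genuine gaps.

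First, your deduction of (i) from (iii) rests on the claim that ``the combinatorial coefficients arising from the leaf data are themselves polynomial in $\mu$.'' They are not. The pruning correspondence (Proposition~\ref{th:pruningorb}) reads
\begin{equation*}
\widehat{H}^{[a]}_{g,n}(\mu_1,\ldots,\mu_n) \;=\; \sum_{\nu_1,\ldots,\nu_n=1}^{\mu_1,\ldots,\mu_n} \widehat{K}^{[a]}_{g,n}(\nu_1,\ldots,\nu_n)\,\prod_{i=1}^n \frac{\mu_i^{(\mu_i-\nu_i)/a}}{\bigl(\tfrac{\mu_i-\nu_i}{a}\bigr)!},
\end{equation*}
and the coefficients $\mu^{(\mu-\nu)/a}/(\tfrac{\mu-\nu}{a})!$ are emphatically non-polynomial in $\mu$; so is $\widehat{H}^{[a]}_{g,n}(\mu)$ itself, which by the orbifold ELSV formula carries the prefactor $a^{1-g+\sum\{\mu_i/a\}}\prod\mu_i^{\lfloor\mu_i/a\rfloor}/\lfloor\mu_i/a\rfloor!$. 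Polynomiality of $\widehat{K}$ is therefore not a formal consequence of triangularity: one must prove that the non-polynomial prefactor on the left expands in the ``forest basis'' $\{\nu\mu^{\mu-\nu}/(\mu-\nu)!\}$ with \emph{polynomial} coefficients. For $a=1$ this is exactly the Lemma on the sequences $q_d$ (which turn out to be Stirling numbers of the second kind, $q_d(\nu)=S(\nu+d,\nu)$, of degree $2d$), and it is the entire content of Proposition~\ref{th:polynomial}. Your sketch omits this step, and without it (i) does not follow. Note also that in the orbifold case the honest conclusion is \emph{quasi}-polynomiality modulo $a$; the paper in fact proves this most cleanly not through ELSV at all but by observing that the Eynard--Orantin differential $\omega_{g,n}$ is rational with poles only at the $a$th roots of unity, so its Taylor coefficients at $z=0$ are quasi-polynomial mod $a$.

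Second, your plan for (ii) --- peel off $\sigma_m$, index cases by which colours become pruneable, and add ``correction terms expressed in pruned quantities via (iii)'' --- does not close up as described. Routing the corrections through (iii) would reintroduce the non-polynomial forest coefficients and an unbounded inversion, destroying effectiveness. The mechanism that actually works (Proposition~\ref{th:cutandjoin} and its orbifold analogue) is purely local: after deleting the edge labeled $m$, one \emph{cascades}, repeatedly deleting essential-degree-one vertices until the graph is pruned again; the deleted edges necessarily form a single path, and the recursion sums over the path length $\gamma$ (whence the shifted constraints $\alpha+\beta+\gamma=\mu_i+1$ and $\alpha+a\beta=\mu_i+\mu_j+a$), with explicit reconstruction factors $\gamma\,(m-1)!/(m-\gamma)!$ etc. There is also a genuine subtlety you do not anticipate: when one face entirely surrounds another, the ``two-face'' reconstruction overcounts, and the surplus exactly cancels the unstable $\widehat{K}^{[a]}_{0,2}$ terms of the splitting case, which is why the recursion carries the ``stable'' restriction. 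Your case analysis by ``which colours become pruneable'' would need to rediscover both the path structure and this cancellation to produce a recursion entirely among pruned numbers.
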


The operation of pruning applies to a broader set of combinatorial problems arising from geometry.  It is
related to rational behaviour of a generating function.  Assemble the orbifold Hurwitz numbers into the following generating function.
\begin{equation} \label{orb_gen_func}
\ch^{[a]}_{g,n}(x_1, \ldots, x_n) = \sum_{\mu_1, \ldots, \mu_n = 1}^\infty \frac{H^{[a]}_{g,n}(\mu)}{m!}\frac{x_1^{\mu_1} \cdots x_n^{\mu_n}}{\mu_1\cdots\mu_n}
\end{equation}
Then $\ch^{[a]}_{g,n}(x_1, \ldots, x_n)$ is a convergent power series that extends to a rational function of $z_i$ for $x_i=z_ie^{-z_i^a}$.  A local expansion of $\ch^{[a]}_{g,n}(x_1, \ldots, x_n)$ in $z_i$ yields a generating function for $K^{[a]}_{g,n}(\mu_1, \mu_2, \ldots, \mu_n)$.  The main observation in this paper is that $K^{[a]}_{g,n}(\mu_1, \mu_2, \ldots, \mu_n)$ can be realised as the weighted count of an interesting combinatorial and geometric problem.

Another application of pruning---where expansion in a rational parameter gives rise to an interesting combinatorial or geometric problem---occurs for a different Hurwitz problem known as a Belyi Hurwitz numbers.  Consider connected genus $g$ branched covers $\pi:\Sigma\to S^2$ unramified over $S^2-\{0,1,\infty\}$ with points in the fibre over $\infty$ labeled $(p_1,...,p_n)$ and with ramification $(\mu_1,...,\mu_n)$, ramification $(2,2,...,2)$ over $1$ and arbitrary ramification over $0$.  We call the weighted count of non-isomorphic such branched covers a Belyi Hurwitz number because the covers are known as Belyi maps.  Pruned Belyi Hurwitz covers are Belyi Hurwitz covers with the further restriction that all points above $0$ have non-trivial ramification.  Theorem~\ref{th:main} generalises to this case.

By pruning an enumerative problem, one aim is to produce a simpler core problem to help to understand the original enumerative problem.  Belyi Hurwitz numbers give a good example of a case where the pruned and unpruned versions have independent interest.  The unpruned Belyi Hurwitz numbers arise from discrete surfaces and matrix integral calculations.  Whereas the pruned Belyi Hurwitz covers can be understood as lattice points in the moduli space of curves $\modm_{g,n}$ and give rise to deep information about $\modm_{g,n}$ such as intersection numbers over $\overline{\modm}_{g,n}$ and the orbifold Euler characteristic of $\modm_{g,n}$.

The examples of pruning given here have a further feature in common.  They each satisfy the topological recursion of Eynard and Orantin.  Given a rational curve $C$, for every $(g,n)\in\bz^2$ with $g\geq 0$ and $n>0$ Eynard and Orantin \cite{EOrInv,EOrTop} define a multidifferential, i.e. a tensor product of meromorphic 1-forms on the product $C^n$, denoted by $\omega^g_n(p_1,...,p_n)$ for $p_i\in C$.  When $2g-2+n>0$, $\omega^g_n(p_1,...,p_n)$ is defined recursively in terms of local  information around the poles of $\omega^{g'}_{n'}(p_1,...,p_n)$ for $2g'+2-n'<2g-2+n$.  The generating functions for each of the examples above have been shown to arise as expansions of Eynard-Orantin invariants of particular rational curves.  For simple Hurwitz numbers this was known as the Bouchard-Mari\~no conjecture \cite{BMaHur} proven in \cite{BEMSMat,EMSLap}, a generalisation of this result to orbifold Hurwitz numbers was proven in \cite{DLNOrb, BSLMMir} and Belyi Hurwitz numbers \cite{EOrTop,NorStr}.  A natural question that arises is whether one can apply the idea of pruning to other enumerative problems that satisfy the topological recursion of Eynard and Orantin.  A good candidate is the stationary Gromov-Witten invariants of $\bp^1$ that were proven in \cite{DOSSIde,NScGro}.  We discuss this in Section~\ref{sec:gw}.

We treat the case of pruned simple Hurwitz numbers, in Section~\ref{sec:prusim}, separately from pruned orbifold Hurwitz numbers.  This is due to the independent interest of simple Hurwitz numbers and also because this easier case should help the reader understand the general case of orbifold Hurwitz numbers treated in Section~\ref{sec:pruorb}.

\section{Simple Hurwitz numbers}  \label{sec:bg}

We begin by formally defining simple Hurwitz numbers via simple branched covers.   For $n>0$ and $g\geq 0$ define the set of simple Hurwitz covers:
\begin{align*}
\ch_{g,n}(\mu)=\Big\{f:\Sigma\to S^2\mid& \Sigma \text{ connected genus } g; \text{ with simple ramification over }\{z:z^m=1\};\\
&f^{-1}(\infty)=(p_1,...,p_n) \text{ with respective ramification }\mu=(\mu_1,...,\mu_n);\\
&\hspace{3cm}f\text{ is unramified over } S^2-\{z:z^m=1\}\cup\{\infty\}\Big\} /\sim
\end{align*}
where $m=2g-2+n+|\mu|$ and $\{f_1: \Sigma_1 \to \mathbb{CP}^1\}\sim\{f_2: \Sigma_2 \to \mathbb{CP}^1\}$ if there exists $h: \Sigma_1 \to \Sigma_2$ that satisfies $f_1 = f_2 \circ h$ and preserves the labels over $\infty$.

Define the simple Hurwitz numbers:
\begin{equation}  \label{simhur}
H_{g,n}(\mu_1,...,\mu_n)=\sum_{f\in\ch_{g,n}(\mu)}\frac{\mu_1\cdot...\cdot\mu_n}{|\text{Aut\ }f|}.
\end{equation}
The summands in \eqref{simhur} are integral essentially because the automorphism group is small.  An automorphism of the branched cover $f: (\Sigma; p_1, p_2, \ldots, p_n) \to (\mathbb{CP}^1; \infty)$ is an automorphism $\phi$ of the marked Riemann surface surface $(\Sigma; p_1, p_2, \ldots, p_n)$ such that $f  = f \circ \phi$.  The automorphism group is only non-trivial on hyperelliptic covers of $\bp^1$ with one point at infinity, in which case it has order 2 and the numerator of \eqref{simhur} is also 2.

\begin{remark}
By the Riemann existence theorem, such a branched cover is prescribed by the location of ramification points and the monodromy around each. Therefore \eqref{factor} and \eqref{simhur} give equivalent definitions.  Any cycle of $T$ acts by conjugation on factorisations \eqref{factor} since it fixes $T$ and preserves the shape of transpositions.  The orbits of this action have size equal to the summands of \eqref{simhur}, which is $\mu_1\cdots\mu_n$, or 1 in the exceptional case $n=1$ and $\mu_1=2$. \end{remark}
\begin{remark}
Different normalisations of simple Hurwitz numbers are often defined in the literature. They may differ by factors of $\mu_1\cdot...\cdot\mu_n$ and $|\text{Aut } \mu|$ where $\text{Aut } \mu$ consists of the permutations of the tuple $\mu = (\mu_1, \mu_2, \ldots, \mu_n)$ that leave it fixed corresponding to whether one distinguishes the preimages of $\infty \in \mathbb{CP}^1$.   For this reason, the cut-and-join relation and ELSV formula will appear here slightly different to some appearances in the literature.  
\end{remark}

Given $f\in\ch_{g,n}(\mu)$, its branching graph \cite{ArnTop} is $f^{-1}(\Gamma_m)\subset\Sigma$ where $\Gamma_m\subset\bc$ is the star graph given by the cone on the $m$th roots of unity. 
\begin{center}
\includegraphics{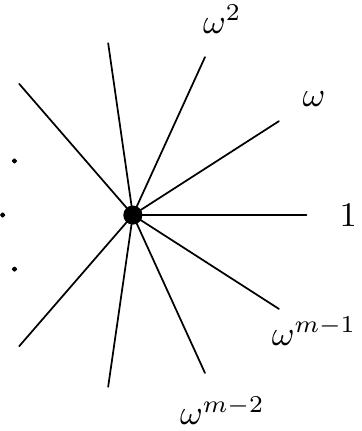}
\end{center}
Conversely, a branching graph, defined below gives rise to $f\in\ch_{g,n}(\mu)$ and the simple Hurwitz numbers $H_{g,n}(\mu)$ equivalently count branching graphs.  

\begin{definition}
We define a {\em branching graph of type $(g; \mu)$} to be an edge-labeled fatgraph of type $(g, \ell(\mu))$ such that for  $m = 2g - 2 + \ell(\mu) + |\mu|$:
\begin{itemize}
\item there are $|\mu|$ vertices and at each of them there is adjacent to $m$ half-edges that are cyclically labeled $1, 2, 3, \ldots, m$;
\item there are exactly $m$ (full) edges that are labeled $1, 2, 3, \ldots, m$; and
\item the $n$ faces are labeled and have perimeters given by $(\mu_1m, \mu_2m, \ldots, \mu_nm)$;
\item each face has a marked $m$-label (of the possible $\mu_k$ appearances of $m$.)
\end{itemize}
The set of all branching graphs of type $(g; \mu)$ is denoted $\text{Fat}_{g,n}(\bm{\mu})$.  
\end{definition}
The marked $m$-labels give locations for removing or attaching edges in the cut-and-join relation below and get rid of any automorphisms.  They also give rise to an unweighted count that produces simple Hurwitz numbers\begin{proposition}
The simple Hurwitz number $H_{g; \mu}$ is equivalent to the enumeration of branching graphs:
\begin{equation*} 
H_{g,n}(\mu_1,...,\mu_n)=\sum_{\Gamma\in\text{Fat}_{g,n}(\mu)}1.
\end{equation*}
\end{proposition}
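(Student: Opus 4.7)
The plan is to identify branching graphs in $\text{Fat}_{g,n}(\mu)$ with pairs $(f, \text{marking})$, where $f \in \ch_{g,n}(\mu)$ and the marking is a choice, for each labeled face, of one appearance of the edge label $m$ on its boundary. Granting such an identification, each cover $f$ contributes exactly $\mu_1 \cdots \mu_n/|\text{Aut}(f)|$ branching graphs up to isomorphism (since $\text{Aut}(f)$ acts freely on the $\prod_k \mu_k$ possible markings, as argued below), which matches the summand in \eqref{simhur}, and summing over $f$ gives the proposition.

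To construct the bijection, I would first verify that the preimage $f^{-1}(\Gamma_m) \subset \Sigma$ is a fatgraph of the required combinatorial type: the $|\mu|$ origin-preimages are the vertices, the cyclic ordering of the $m$ roots of unity lifts to cyclic labels on the $m$ half-edges at each vertex, each edge inherits its label from the corresponding root of unity, and the $n$ faces come from $f^{-1}(\infty) = (p_1, \dots, p_n)$ with perimeter $\mu_k m$ because locally near $p_k$ the cover looks like $z \mapsto z^{\mu_k}$. Conversely, from a branching graph one reads off, for each $i$, a permutation $\sigma_i \in S_{|\mu|}$ encoded by the matching of $i$-labeled half-edges across the $|\mu|$ vertices, together with a permutation $T$ from the cyclic face structure; these satisfy $\sigma_1 \cdots \sigma_m = T$ and act transitively by connectedness of the fatgraph, so by the Riemann existence theorem they determine a unique branched cover of the desired topological type up to isomorphism. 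Checking that these two constructions are mutually inverse on isomorphism classes is then routine.

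The final step is to confirm that the marking kills automorphisms. A cover automorphism $\phi$ preserves labeled faces, and if it fixes the marked appearance of $m$ on each face, then it fixes a distinguished corner on the boundary of each face; reading out the cyclic boundary sequence from that corner then forces $\phi$ to fix every edge and vertex of $f^{-1}(\Gamma_m)$. The most delicate check I expect is the exceptional hyperelliptic case $(n,\mu) = (1,(2))$, where $|\text{Aut}(f)| = 2$ and $\mu_1 = 2$: here one must verify directly that the hyperelliptic involution swaps the two appearances of the label $m$ on the single face boundary, producing one orbit rather than two, in agreement with $H_{0,1}((2)) = 1$ and the remark following \eqref{simhur}.
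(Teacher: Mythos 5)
Your proposal is correct and follows exactly the route the paper intends: the paper states this proposition without a written proof, but the surrounding discussion (the branching graph is $f^{-1}(\Gamma_m)$, the marked $m$-labels ``get rid of any automorphisms'') is precisely the identification of branching graphs with pairs (cover, marking) and the freeness of the $\mathrm{Aut}(f)$-action that you spell out. Your explicit treatment of the exceptional case $n=1$, $\mu_1=2$ matches the paper's remark that the orbit there has size $1$, so nothing is missing.
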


We now assemble three fundamental results concerning simple Hurwitz numbers.  We use the normalisation
\[
\widehat{H}_{g,n}(\mu_1, \mu_2, \ldots, \mu_n) = \frac{H_{g,n}(\mu_1, \mu_2, \ldots, \mu_n)}{(2g-2+n+|\mu|)!}.
\]

The cut-and-join recursion for simple Hurwitz numbers is obtained by edge removal from branching graphs \cite{GJVNum}:
\begin{align}   \label{caj}
m\hat{H}_{g,n}(\mu_1, \ldots, \mu_n) &= \sum_{i < j} \mu_i\mu_j \hat{H}_{g,n-1}(\bm{\mu}_{S \setminus \{i,j\}}, \mu_i+\mu_j) \\
&+ \frac{1}{2}\sum_{i=1}^n \mu_i\sum_{\alpha+\beta=\mu_i}  \left[ \hat{H}_{g-1,n+1}(\bm{\mu}_{S \setminus\{i\}}, \alpha, \beta) + \mathop{\sum_{g_1+g_2=g}}_{I \sqcup J = S \setminus\{i\}} \hat{H}_{g_1, |I|+1}(\mu_I, \alpha) \hat{H}_{g_2, |J|+1}(\mu_J, \beta) \right].\nonumber
\end{align}
The conditions $g_1+g_2=g$, $I \sqcup J = S \setminus \{i\}$, and $\alpha + \beta = \mu_i$ imply a single edge removal, i.e.\  $m_1 + m_2 = m-1$ where $m=2g-2+n+|\mu|$, $m_1 = 2g_1 - 1 + |I| + |\mu_I| + \alpha$ and $m_2 = 2g_2 - 1 + |J| + |\mu_J| + \beta$. 

\begin{proposition}[ELSV formula, \cite{ELSVHur}] \label{th:elsv}
\[
\widehat{H}_{g,n}(\mu_1, \mu_2, \ldots, \mu_n) =  \prod_{i=1}^n \frac{\mu_i^{\mu_i+1}}{\mu_i!} \int_{\overline{\mathcal{M}}_{g,n}} \frac{1-\lambda_1+...+(-1)^g\lambda_g}{(1-\mu_1\psi_1)...(1-\mu_n\psi_n)}
\]
where $\psi_i$ and $\lambda_i$ are tautological classes on the moduli space.
\end{proposition}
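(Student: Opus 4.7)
The plan is to realize $\widehat{H}_{g,n}(\mu)$ as an equivariant intersection number on a moduli space of relative stable maps to $\mathbb{CP}^1$, then extract the Hodge-integral expression on $\overline{\modm}_{g,n}$ by virtual localization. More precisely, let $\overline{\modm}_{g,n}(\bp^1,\mu)$ be the moduli space of genus-$g$ relative stable maps of degree $|\mu|$ to $\bp^1$ with ramification profile $\mu$ over $\infty$, and let $\mathrm{br}:\overline{\modm}_{g,n}(\bp^1,\mu)\to\mathrm{Sym}^m(\bc)$ be the branch morphism. A standard Riemann existence argument identifies $H_{g,n}(\mu)/\prod\mu_i$ with $\deg\mathrm{br}$ (up to dividing by automorphisms already accounted for in \eqref{simhur}), so proving the formula reduces to computing $\int_{[\overline{\modm}_{g,n}(\bp^1,\mu)]^{\mathrm{vir}}}\mathrm{br}^\ast(\mathrm{pt})$ as a Hodge integral on $\overline{\modm}_{g,n}$.

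Next I would exploit the natural $\bc^\ast$-action on $\bp^1$ fixing $0$ and $\infty$ and apply the Graber--Pandharipande virtual localization theorem. Any $\bc^\ast$-fixed relative stable map $f$ has image contained in the $0$--$\infty$ axis, so $\Sigma$ decomposes as a contracted "vertex" subcurve $\Sigma_0\subset f^{-1}(0)$ of some genus together with rational tails mapping as totally ramified covers $z\mapsto z^{\mu_i}$, one for each marked point. The locus of such configurations with a single connected contracted component of genus $g$ and all $n$ rational tails is canonically isomorphic to $\overline{\modm}_{g,n}$, and I expect this to be the only stratum contributing to the coefficient of the Hodge class.

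The third step is to compute the equivariant Euler class of the virtual normal bundle on this main fixed locus. The deformation-obstruction sequence splits into three pieces: automorphisms of the rational tails contribute the factor $\prod\mu_i^{\mu_i}/\mu_i!$ (from $\mathrm{Aut}(z\mapsto z^{\mu_i})=\bz/\mu_i\bz$ and the standard weight calculation), node smoothings at the preimage of $0$ contribute $\prod 1/(1-\mu_i\psi_i)$ after identifying the cotangent weight at the node with $-\mu_i$ times the equivariant parameter times $\psi_i$, and deformations of $f$ along the contracted component yield the Hodge bundle contribution $c_t(\ch^\vee)=\sum(-1)^i\lambda_i$. Multiplying by the extra equivariant factor $\mu_i$ coming from the branch morphism evaluated at the chosen branch-point insertion converts $\mu_i^{\mu_i}/\mu_i!$ into the desired prefactor $\mu_i^{\mu_i+1}/\mu_i!$.

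The hard part will be verifying that all other $\bc^\ast$-fixed strata --- those with multiple contracted components, or with contracted components over both $0$ and $\infty$, or with further rational bridges --- produce contributions that are polynomial in the equivariant parameter of strictly positive degree, and hence vanish after taking the non-equivariant limit dictated by the degree of $\mathrm{br}$. This requires a careful dimension count on each stratum against the rank of the virtual normal bundle, together with the observation that relative conditions at $\infty$ prevent any rubber contribution on that side. Once this vanishing is established, the localization identity reduces precisely to the claimed formula. An alternative route, avoiding virtual localization, would be to derive both sides from Okounkov's formula for simple Hurwitz numbers via infinite-wedge operators and match coefficients in $\mu_i$ against the Witten--Kontsevich intersection numbers; this is technically cleaner but less geometric, and would not make the appearance of the full Hodge class manifest.
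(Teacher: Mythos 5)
The paper does not prove this proposition: it is the ELSV formula, quoted from Ekedahl--Lando--Shapiro--Vainshtein \cite{ELSVHur} and used purely as an external input (for instance in the proof of Proposition~\ref{th:polynomial}). There is therefore no internal proof to compare yours against, and your proposal must be judged as a free-standing proof of ELSV.

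As such, it is a recognisable outline of the standard virtual-localisation proof (in the style of Graber--Vakil and Okounkov--Pandharipande), rather than the original ELSV argument, which used a different compactification of the Hurwitz space. The ingredients you list for the principal fixed locus --- the factor $\mu_i^{\mu_i}/\mu_i!$ from the automorphisms and deformations of the totally ramified rational tails, the factor $1/(1-\mu_i\psi_i)$ from smoothing the nodes over $0$, and the Hodge factor $\sum(-1)^i\lambda_i$ from the obstruction bundle on the contracted component --- are correct and do assemble into the claimed expression. (The extra $\prod\mu_i$ in this paper's normalisation comes from its weighting of covers by $\mu_1\cdots\mu_n$ in \eqref{simhur}, i.e.\ from the quantity you divided out at the start, not from an equivariant evaluation of the branch morphism; that attribution is a minor bookkeeping slip.) The genuine gap is the one you flag yourself: the vanishing of all other fixed-locus contributions is the substance of the proof, and your sketch of it contains an inaccuracy. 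The assertion that the relative conditions at $\infty$ ``prevent any rubber contribution on that side'' is false as stated: $\bc^\ast$-fixed relative stable maps with degenerate (rubber) targets over $\infty$ do exist and form positive-dimensional fixed loci. What one actually proves is that, for a suitable equivariant lift of the branch-point class (all branch points pushed to $0$), these loci and the other non-principal graphs contribute zero; establishing this requires either the graph-by-graph analysis of Graber--Vakil or the operator formalism you mention as an alternative. Until that step is carried out, the proposal is a correct strategy but not yet a proof.
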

Let $\omega_{g,n}$ denote the correlation differentials output by the Eynard--Orantin topological recursion applied to the spectral curve
\[
x(z) = z \exp(-z) \qquad \text{and} \qquad y(z) = z.
\]  
The Bouchard--Mari\~{n}o conjecture \cite{BMaHur} proven in \cite{BEMSMat,EMSLap} is:
\begin{theorem}  \label{th:bouchardmarino}
The expansion of $\omega_{g,n}$ at $x_1 = x_2 = \cdots = x_n = 0$ is given by
\begin{equation} \label{eq:differentials}
\omega_{g,n} = \sum_{\mu_1, \ldots, \mu_n = 1}^\infty \widehat{H}_{g,n}(\mu_1, \mu_2, \ldots, \mu_n) \prod_{k=1}^n x_k^{\mu_k-1} \, \dd x_k.
\end{equation}
\end{theorem}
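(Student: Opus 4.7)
The plan is to follow the Laplace-transform strategy of Eynard--Mulase--Safnuk: substitute the ELSV formula into the left-hand side of \eqref{eq:differentials}, apply a Laplace-style transform to convert the discrete sums over partitions $\mu$ into local expansions in the spectral-curve coordinate $z$, and then verify the Eynard--Orantin recursion by induction on $2g - 2 + n$.

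First, I would exploit the fact that the change of coordinate $x = z e^{-z}$ is tailored precisely to the factors $\mu^{\mu+1}/\mu!$ in the ELSV formula. Lagrange inversion applied to $x = z e^{-z}$ yields closed-form Laplace transforms
\[
\hat{\xi}_k(z) := \sum_{\mu \geq 1} \frac{\mu^{\mu+k}}{\mu!}\, x(z)^{\mu}
\]
as rational functions of $z$ whose only poles lie at the branch point $z = 1$, where $\dd x/\dd z = 0$. Substituting Proposition~\ref{th:elsv} into $\sum_\mu \widehat{H}_{g,n}(\mu)\prod x_i^{\mu_i-1}\,\dd x_i$, expanding each factor $(1-\mu_i\psi_i)^{-1}$ as a geometric series and carrying out the $\mu_i$-sums gives
\[
\sum_{k_1,\ldots,k_n \geq 0} \int_{\overline{\modm}_{g,n}} \bigl(1 - \lambda_1 + \cdots + (-1)^g \lambda_g\bigr) \prod_{i=1}^n \psi_i^{k_i} \,\prod_{i=1}^n \hat{\xi}_{k_i}(z_i)\,\dd x_i,
\]
exhibiting the left-hand side of \eqref{eq:differentials} as an explicit rational multidifferential in the $z_i$ with poles only at the branch point.

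Next, the base cases $(g,n) = (0,3)$ and $(1,1)$ can be checked directly against the Eynard--Orantin output for $(x,y) = (z e^{-z}, z)$. For the inductive step, I would take the cut-and-join recursion \eqref{caj}, divide through by the ELSV normalising factors $\mu_i^{\mu_i+1}/\mu_i!$, and reinterpret each term using the Hodge-integral presentation above. The convolutions $\alpha + \beta = \mu_i$ appearing in \eqref{caj} translate---via the same Laplace identities used to define the $\hat{\xi}_k$---into products $\hat{\xi}_{k_1}(z)\hat{\xi}_{k_2}(z)$ evaluated at coincident arguments, which is precisely the structure appearing in the Eynard--Orantin residue kernel on a curve with a simple ramification point.

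The technical heart, and the main obstacle, is verifying this matching. One must show that the Eynard--Orantin residue kernel applied to the already-known $\omega_{g',n'}$ with $2g'-2+n' < 2g-2+n$ reproduces, after expansion in the $x_i$, exactly the cut-and-join sum \eqref{caj}. This reduces to a concrete residue computation at $z = 1$, carried out term by term: the singular part of the integrand reconstructs the combinatorial weights $\mu_i \mu_j$, $\mu_i$, and the convolution kernels appearing in \eqref{caj}, while the polynomial part supplies the Hodge-integral pieces and reassembles via the standard gluing morphisms of $\overline{\modm}_{g,n}$. The delicacy lies in controlling the $\lambda$-class contributions---which have no direct analogue on the cut-and-join side and must recombine with the $\psi$-class terms---and in a closed-form computation of $\mathrm{Res}_{z=1}\bigl(\hat{\xi}_{k_1}(z)\,\hat{\xi}_{k_2}(z)\,\dd z\bigr)$. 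Once this Laplace-transformed cut-and-join identity is established, the induction closes and \eqref{eq:differentials} follows.
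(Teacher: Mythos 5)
You should first note that the paper does not prove Theorem~\ref{th:bouchardmarino} at all: it is imported from the literature, cited as the Bouchard--Mari\~no conjecture proven in \cite{BEMSMat,EMSLap}, and the authors use it as a black box (e.g.\ in Proposition~\ref{pro:expansion}). Your proposal is therefore not an alternative to anything in this paper; it is a reconstruction of the strategy of \cite{EMSLap} (whose title is literally ``The Laplace transform of the cut-and-join equation and the Bouchard--Mari\~no conjecture''). The strategy you describe --- substitute the ELSV formula, use Lagrange inversion for $x=ze^{-z}$ to express $\hat\xi_k(z)=\sum_\mu \frac{\mu^{\mu+k}}{\mu!}x^\mu$ as rational functions with poles at the branch point $z=1$, and then match the Laplace transform of \eqref{caj} against the Eynard--Orantin residue kernel by induction on $2g-2+n$ --- is indeed the correct and historically successful route.

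The gap is that your proposal stops exactly where the proof begins. The entire mathematical content of the theorem is the ``technical heart'' you name and defer: showing that the Laplace transform of the cut-and-join recursion \eqref{caj} is \emph{identical} to the Eynard--Orantin recursion for $(x,y)=(ze^{-z},z)$. This is not a routine verification. The cut-and-join convolutions $\alpha+\beta=\mu_i$ do not transform directly into the local residue at $z=1$; one must first establish that the transformed generating functions are rational in $z$ with poles only at $z=1$ (polynomiality via ELSV plus properties of the $\hat\xi_k$), then perform a nontrivial rearrangement to isolate the principal part, handle the unstable $(0,1)$ and $(0,2)$ contributions that build the recursion kernel $\frac{\int_{\bar z}^{z}B}{(y(z)-y(\bar z))\dd x}$, and compute residues of products $\hat\xi_{k_1}\hat\xi_{k_2}$ in closed form. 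None of this is carried out, and no argument is given for why the $\lambda$-class terms (present in ELSV, absent in \eqref{caj}) must recombine correctly --- you acknowledge this delicacy but offer no mechanism for resolving it. As written, the proposal is a correct plan of attack with the decisive identity asserted rather than proved; to make it a proof one would have to supply essentially the full content of \cite{EMSLap} or \cite{BEMSMat}, or else simply cite those papers as the present paper does.
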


\section{Pruned simple Hurwitz numbers}  \label{sec:prusim}

\subsection{Pruned simple Hurwitz numbers}

In the previous section, we interpreted simple Hurwitz numbers as an enumeration of branching graphs. In this section, we define pruned simple Hurwitz numbers by restricting to the set of branching graphs that satisfy a mild condition on the vertex degrees. We will show that simple Hurwitz numbers can be recovered from their pruned counterparts. One advantage of studying pruned simple Hurwitz numbers is that they possess an inherent polynomial structure that allows geometric information to be easily extracted. We conclude the section with an application of this methodology to obtain a new proof of the Witten--Kontsevich theorem.

We define the {\em essential} degree of a vertex in a branching graph to be the number of incident (full) edges. The branching graph of $f\in \ch_{g,n}(\mu)$ can be equivalently described as a triple $(X,\tau_0,\tau_1)$ where $X=f^{-1}(\Gamma_m^0)$, for $\Gamma_m^0$ the interior of the stargraph $\Gamma_m$, equipped with automorphisms $\tau_0:X\to X$ given by the monodromy map around 0 and $\tau_1:X\to X$ given by the monodromy maps around the roots of unity.  The full edges, often simply called edges, correspond to orbits of $\tau_1$ of length 2, whereas half-edges correspond to fixed points of $\tau_1$.    

For $n>0$ and $g\geq 0$ define the set of pruned simple Hurwitz covers:
\begin{align*}
\ck_{g,n}(\mu)=\Big\{f\in\ch_{g,n}(\mu)\mid& \text{ all vertices of the branching graph } f^{-1}(\Gamma_m) \text{ have essential degree }\geq 2.\} 
\end{align*}
We call a branching graph {\em pruned} if all of its vertices have degree at least two and denote the set of all pruned branching graphs of type $(g; \mu)$ by $\text{PFat}_{g,n}(\bm{\mu})$.  Define the pruned simple Hurwitz numbers:
\begin{equation}  \label{prusimhur}
K_{g,n}(\mu_1,...,\mu_n)=\sum_{f\in\ck_{g,n}(\mu)}\frac{\mu_1\cdot...\cdot\mu_n}{|\text{Aut\ }f|}=\sum_{\Gamma\in\text{PFat}_{g,n}(\mu)}1.
\end{equation}
As for $H_{g,n}$, this definition agrees with the definition of pruned simple Hurwitz numbers given in the introduction via factorisations in the symmetric group.  Furthermore, let $m(g, \bm{\mu}) = 2g-2+n+|\bm{\mu}|$ and define the normalisation
\[
\widehat{K}_{g,n}(\bm{\mu}) = \frac{K_{g,n}(\bm{\mu})}{m(g, \bm{\mu})!}.
\]
where $\bm{\mu} = (\mu_1, \mu_2, \ldots, \mu_n)$.

\begin{example} \label{ex:unstable}
The edges of a branching graph with $(g,n) = (0,1)$ necessarily form a tree. So in this case, there does not exist a pruned branching graph and we have $K_{0,1}(\mu_1) = 0$ for all positive integers $\mu_1$.

The edges of a branching graph with $(g,n) = (0,2)$ and $\bm{\mu} = (\mu_1, \mu_2)$ necessarily form a cycle with $\mu_1 + \mu_2$ edges. Remove the edge labeled $\mu_1+\mu_2$ and record the labels of the remaining edges in an anticlockwise fashion around face 1 to obtain a permutation of the set $\{1, 2, \ldots, \mu_1+\mu_2-1\}$. The contribution to the perimeter of face 1 is one more than the number of ascents of the resulting permutation. Therefore, we have
\[
K_{0,2}(\mu_1, \mu_2) = \mu_1 \mu_2A(\mu_1+\mu_2-1, \mu_1-1)
\]
for all positive integers $\mu_1$ and $\mu_2$. Here, $A(m, n)$ represents the Eulerian number that counts the number of permutations of the set $\{1, 2, \ldots, m\}$ with $n$ ascents.
\end{example}

The cut-and-join recursion provides an effective recursive method for the calculation  of simple Hurwitz numbers~\cite{GJaTra}. The next result establishes an analogous recursion for the case of pruned simple Hurwitz numbers.

\begin{proposition}[Cut-and-join recursion for pruned simple Hurwitz numbers] \label{th:cutandjoin}
The following equation holds for all $2g-2+n > 0$ and $\bm{\mu} = (\mu_1, \mu_2, \ldots, \mu_n)$.
\begin{align*}
m(g,\bm{\mu}) \, \widehat{K}_{g,n}(\bm{\mu}) &= \sum_{i < j} \mu_i \mu_j  \hspace{-5mm}\sum_{\alpha + \beta = \mu_i + \mu_j + 1} \hspace{-5mm} \beta \, \widehat{K}_{g,n-1}(\bm{\mu}_{S \setminus \{i,j\}}, \alpha) \\
&+ \frac{1}{2} \sum_{i=1}^n \mu_i \hspace{-2mm}\sum_{\alpha + \beta + \gamma = \mu_i + 1}  \gamma \left[ \widehat{K}_{g-1,n+1}(\bm{\mu}_{S \setminus\{i\}}, \alpha, \beta) + \mathop{\sum_{g_1+g_2=g}}_{I \sqcup J = S \setminus\{i\}}^{\mathrm{stable}} \widehat{K}_{g_1, |I|+1}(\bm{\mu}_I, \alpha) ~ \widehat{K}_{g_2, |J|+1}(\bm{\mu}_J, \beta) \right]
\end{align*}
We use the notation $S = \{1, 2, \ldots, n\}$ and $\bm{\mu}_I = (\mu_{i_1}, \mu_{i_2}, \ldots, \mu_{i_k})$ for $I = \{i_1, i_2, \ldots, i_k\}$. The word {\em stable} over the final summation indicates that summands involving $\widehat{K}_{0,1}$ or $\widehat{K}_{0,2}$ are to be excluded.
\end{proposition}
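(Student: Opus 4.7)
The plan is a weighted combinatorial bijection on pruned branching graphs, generalising the edge-removal argument that proves the classical cut-and-join \eqref{caj}. Since $\widehat{K}_{g,n}(\bm{\mu}) = K_{g,n}(\bm{\mu})/m!$ counts edge-unlabelled pruned branching graphs, the left-hand side $m(g,\bm{\mu})\,\widehat{K}_{g,n}(\bm{\mu})$ enumerates pairs $(\Gamma, e)$ with $\Gamma \in \text{PFat}_{g,n}(\bm{\mu})$ and $e$ a distinguished edge.

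In the unpruned setting, $(\Gamma, e) \mapsto \Gamma \setminus \{e\}$ produces another branching graph, but this can violate the pruning condition because the endpoints of $e$ may drop to essential degree $\leq 1$. I would instead extend $e$ to the \emph{maximal chain} through $e$: the unique path $v_0 \, e_1 \, v_1 \, e_2 \, \cdots \, e_\ell \, v_\ell$ in $\Gamma$ containing $e$, whose interior vertices have essential degree exactly $2$ and whose endpoints have essential degree at least $3$ in $\Gamma$ (the endpoints may coincide). Deleting this chain together with its $\ell - 1$ interior vertices leaves a pruned branching graph $\Gamma'$.

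Three topological cases now arise. In the \emph{join} case the chain separates two distinct faces $i \neq j$, which merge in $\Gamma'$ into a single face of perimeter $\alpha$; the vertex-count identity $|\bm{\mu}| - (\ell-1) = (|\bm{\mu}| - \mu_i - \mu_j) + \alpha$ forces $\alpha + \ell = \mu_i + \mu_j + 1$, matching the join summation with $\beta = \ell$. In the \emph{split, connected} case the chain lies inside a single face $i$ and its removal leaves $\Gamma'$ connected of type $(g-1, n+1)$, with face $i$ splitting into faces of perimeters $\alpha, \beta$ satisfying $\alpha + \beta + \ell = \mu_i + 1$, so $\gamma = \ell$. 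In the \emph{split, disconnected} case, the chain lies in one face but its removal disconnects $\Gamma$ into components $\Gamma'_1, \Gamma'_2$ of types $(g_1, |I|+1), (g_2, |J|+1)$, with the same perimeter relation split between the two pieces.

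Reversing the map yields the weights on the right-hand side: for each smaller pruned $\Gamma'$ one counts the chain insertion data (the position of the two chain endpoints in the appropriate face of $\Gamma'$), the choice of which of the $\ell$ chain edges carries the distinguished label (yielding the linear factors $\beta$ and $\gamma$), and the marked $m$-labels of the new faces of $\Gamma$ (yielding $\mu_i \mu_j$ or $\mu_i$). The \emph{stable} qualification then excludes $\widehat{K}_{0,1} = 0$ trivially and $\widehat{K}_{0,2}$ because a $(0,2)$ piece would force the chain to continue into the cycle, contradicting its maximality. The main obstacle is the weight bookkeeping: one must verify that the marked $m$-label redistribution under chain insertion contributes exactly $\mu_i \mu_j$ (respectively $\mu_i$) independently of the chain-edge selection, and handle the self-loop case $v_0 = v_\ell$ where the chain forms a cycle. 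An alternative algebraic route would substitute the pruning relation \eqref{th:pruning} into the standard cut-and-join for $\widehat{H}_{g,n}$, at the cost of geometric transparency.
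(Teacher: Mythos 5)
Your overall strategy is the same as the paper's: the left-hand side marks an edge (the paper uses the edge labelled $m$), you delete the maximal chain of degree-two vertices through it, and the three topological outcomes of that deletion give the three terms of the recursion; your vertex-count bookkeeping $\alpha+\beta=\mu_i+\mu_j+1$ and $\alpha+\beta+\gamma=\mu_i+1$ is also exactly the paper's. However, your justification of the \emph{stable} restriction is wrong, and this is precisely the delicate point of the proof. You claim a $(0,2)$ component cannot arise because a $(0,2)$ piece ``would force the chain to continue into the cycle, contradicting its maximality.'' It does not: if the deleted chain is attached to a cycle $C$ at a vertex $v$, then $v$ has essential degree $3$ in $\Gamma$ (two edges of $C$ plus one edge of the chain), so the cascade of degree-one removals stops at $v$, and the forward map genuinely produces a disconnected result having $C$ as a pruned $(0,2)$ component. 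These configurations really do occur in the decomposition.

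The reason the $\widehat{K}_{0,2}$ summands must be excluded is a compensating overcount in the \emph{join} case, not their non-occurrence in the splitting case. When one reconstructs a graph in which face $i$ completely surrounds face $j$ --- the separating configuration being a cycle joined to the rest of the graph by a tail --- the join-case weight $\mu_i\mu_j\,\beta\,\frac{(m-1)!}{(m-\beta)!}$ lets the distinguished edge sit anywhere along the cycle \emph{or} the tail; but if it sits on the tail it is adjacent to face $i$ on both sides, so under the forward map that marked graph belongs to the disconnecting case with a $(0,2)$ component, not to the join case. The surplus produced by the join-case formula is exactly equal to the $\widehat{K}_{0,2}$ terms of the disconnecting case, and the two cancel --- that is what ``stable'' encodes. (Your observation that $\widehat{K}_{0,1}=0$ makes that exclusion harmless is correct.) As written, your bijection double-counts every such surrounding configuration, so the recursion you would obtain is off by precisely these terms; the maximality argument needs to be replaced by this accounting.
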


\begin{example} \label{ex:cutandjoin}
As an example of the cut-and-join recursion for pruned simple Hurwitz numbers in action, consider the following calculation of $\widehat{K}_{0,4}(\mu_1, \mu_2, \mu_3, \mu_4)$, which uses $\widehat{K}_{0,3}(\mu_1, \mu_2, \mu_3) = \mu_1\mu_2\mu_3$.
\begin{align*}
(|\bm{\mu}| + 2) \, \widehat{K}_{0,4}(\mu_1, \mu_2, \mu_3, \mu_4) &= \prod_{i=1}^4\mu_i\cdot\sum_{i < j}\ \sum_{\alpha + \beta = \mu_i + \mu_j + 1} \alpha \beta\\
& =(|\bm{\mu}| + 2)\cdot \prod_{i=1}^4\mu_i\cdot \frac{1}{2}  (\mu_1^2 + \mu_2^2 + \mu_3^2 + \mu_4^2 + \mu_1 + \mu_2 + \mu_3 + \mu_4)
\end{align*}
Therefore we conclude that
$$K_{0,4}(\mu_1, \mu_2, \mu_3, \mu_4) = (|\mu|+2)!\cdot\prod_{i=1}^4\mu_i\cdot\frac{1}{2} \sum_{i=1}^4 (\mu_i^2 + \mu_i)
$$
\end{example}
In contrast, the calculation of $H_{0,4}(\mu_1, \mu_2, \mu_3, \mu_4)$ via the cut-and-join recursion \eqref{caj} is not really feasible because it involves combinatorial identities more difficult than sums of polynomials, and because $H_{0,4}$ appears on both sides of the recursion.

\begin{proof}[Proof of Proposition~\ref{th:cutandjoin}]
We begin by expressing the cut-and-join recursion without the normalisation.
\begin{align*}
K_{g,n}(\bm{\mu}) &= \sum_{i < j} \hspace{3mm}\mu_i  \mu_j \hspace{-6mm} \sum_{\alpha + \beta = \mu_i + \mu_j + 1}  \beta \, \frac{(m-1)!}{(m-\beta)!} \, K_{g,n-1}(\bm{\mu}_{S \setminus \{i,j\}}, \alpha) \\
&+ \frac{1}{2} \sum_{i=1}^n \hspace{3mm}\mu_i  \hspace{-5mm} \sum_{\alpha + \beta + \gamma = \mu_i + 1} \hspace{-5mm}  \gamma \, (m-1)! \left[ \frac{K_{g-1,n+1}(\bm{\mu}_{S \setminus\{i\}}, \alpha, \beta)}{(m-\gamma)!} +\hspace{-2mm} \mathop{\sum_{g_1+g_2=g}}_{I \sqcup J = S \setminus\{i\}}^{\mathrm{stable}} \frac{K_{g_1, |I|+1}(\bm{\mu}_I, \alpha) ~ K_{g_2, |J|+1}(\bm{\mu}_J, \beta)}{m_1! \, m_2!} \right]
\end{align*}
We use the notation $m_1 = 2g_1 - 1 + |I| + |\bm{\mu}_I|+\alpha$ and $m_2 = 2g_2 - 1 + |J| + |\bm{\mu}_J|+\beta$. The conditions $g_1 + g_2 = g$, $I \sqcup J = S \setminus \{i\}$, and $\alpha + \beta + \gamma = \mu_i+1$ imply that $m_1 + m_2 = m - \gamma$.

Recall that $K_{g,n}(\bm{\mu})$ is the number of pruned branching graphs, $\#\text{PFat}_{g,n}(\bm{\mu})$.  Choose a branching graph in $\text{PFat}_{g,n}(\bm{\mu})$ and remove the edge labeled $m$ from it. Repeatedly remove vertices with degree one and their incident edges until all of the vertices of the resulting branching graph have degree at least two. When removing an edge with a given label, we also remove all half-edges with the corresponding label. The removed edges necessarily form a path in the original branching graph. Observe that one of the following three cases must arise.

\begin{itemize}
\item {\em The edge labeled $m$ is adjacent to the face labeled $i$ on both sides and its removal leaves a connected graph.} \\
Suppose that $\gamma$ edges are removed in total, so that a branching graph in $\text{PFat}_{g-1,n+1}(\bm{\mu}_{S \setminus \{i\}}, \alpha, \beta)$ remains, where $\alpha + \beta + \gamma = \mu_i + 1$.

\begin{center}
\includegraphics{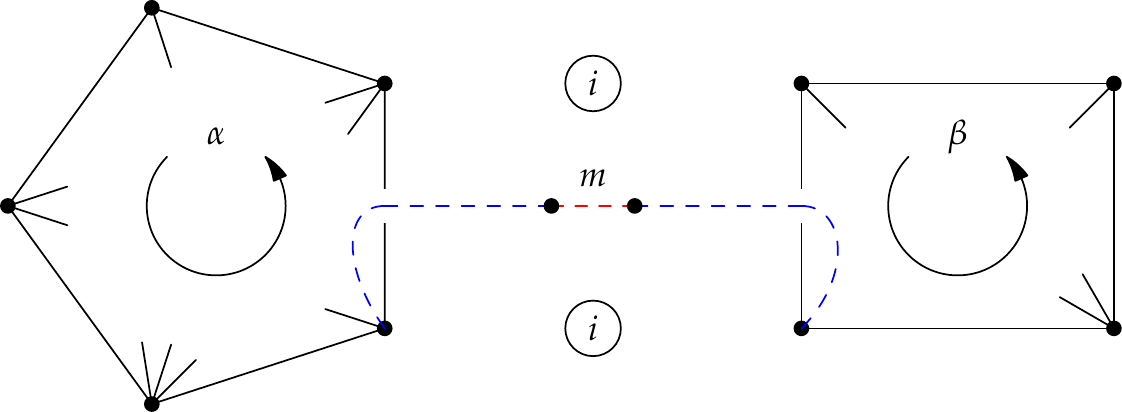}
\end{center}

\noindent Conversely, there are $\frac{1}{2} \, \mu_i \gamma \, \frac{(m-1)!}{(m-\gamma)!}$ ways to reconstruct a branching graph in $\text{PFat}_{g,n}(\bm{\mu})$ from a branching graph in $\text{PFat}_{g-1,n+1}(\bm{\mu}_{S \setminus \{i\}}, \alpha, \beta)$ by adding a path of $\gamma$ edges. When adding an edge with a given label, we also add all possible half-edges with the corresponding label, while maintaining the correct cyclic ordering of the half-edges at every vertex. The factor $\mu_i$ accounts for the position of the new marked $m$-labeled edge. The factor $\gamma$ accounts for the position of the edge labeled $m$ along the path. The factor $\frac{(m-1)!}{(m-\gamma)!}$ accounts for the edge labels appearing on the remaining edges of the path. It is then necessary to adjust by the factor $\frac{1}{2}$ due to the overcounting caused by the symmetry in $\alpha$ and $\beta$.

\item {\em The edge labeled $m$ is adjacent to the face labeled $i$ on both sides and its removal leaves the disjoint union of two connected graphs.} \\
Suppose that $\gamma$ edges are removed in total, so that the disjoint union of two branching graphs in $\text{PFat}_{g_1, |I|+1}(\bm{\mu}_I, \alpha)$ and $\text{PFat}_{g_2, |J|+1}(\bm{\mu}_J, \beta)$ remain, where $\alpha + \beta + \gamma = \mu_i + 1$, $g_1 + g_2 = g$, and $I \sqcup J = S \setminus \{i\}$.

\noindent Conversely, there are $\frac{1}{2} \, \mu_i \gamma \, \frac{(m-1)!}{m_1! \, m_2!}$ ways to reconstruct a branching graph in $\text{PFat}_{g,n}(\bm{\mu})$ from a pair of branching graphs in $\text{PFat}_{g_1, |I|+1}(\bm{\mu}_I, \alpha)$ and $\text{PFat}_{g_2, |J|+1}(\bm{\mu}_J, \beta)$ by adding a path of $\gamma$ edges. When adding an edge with a given label, we also add all possible half-edges with the corresponding label, while maintaining the correct cyclic ordering of the half-edges at every vertex. The factor $\mu_i$ accounts for the position of the new marked $m$-labeled edge. The factor $\gamma$ accounts for the position of the edge labeled $m$ along the path. The factor $\frac{(m-1)!}{m_1! \, m_2!}$ accounts for the distribution of the edge labels $\{1, 2, \ldots, m-1\}$ between the two branching graphs. It is then necessary to adjust by the factor $\frac{1}{2}$ due to the overcounting caused by the symmetry in $(g_1, I, \alpha)$ and $(g_2, J, \beta)$.

\item {\em The edge labeled $m$ is adjacent to two distinct faces labeled $i$ and $j$.} \\
Suppose that $\beta$ edges are removed in total, so that a branching graph in $\text{PFat}_{g,n-1}(\bm{\mu}_{S \setminus \{i, j\}}, \alpha)$ remains, where $\alpha + \beta = \mu_i + \mu_j + 1$.

\begin{center}
\includegraphics{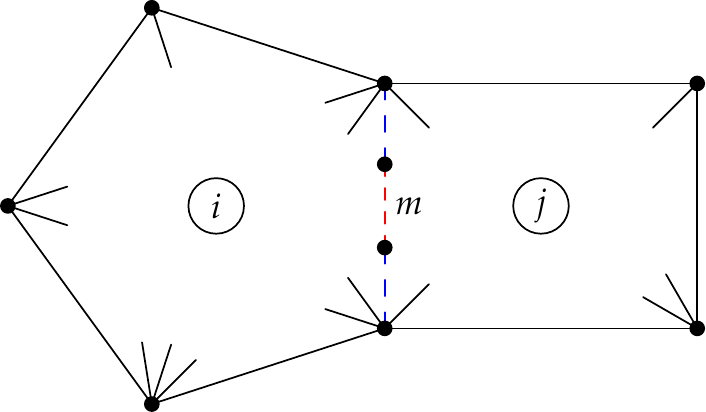}
\end{center}

\noindent Conversely, there are $\mu_i\mu_j \beta \, \frac{(m-1)!}{(m-\beta)!}$ ways to reconstruct a branching graph in $\text{PFat}_{g,n}(\bm{\mu})$ from a branching graph in $\text{PFat}_{g,n-1}(\bm{\mu}_{S \setminus \{i, j\}}, \alpha)$ by adding a path of $\beta$ edges. When adding an edge with a given label, we also add all possible half-edges with the corresponding label, while maintaining the correct cyclic ordering of the half-edges at every vertex. The factor $\mu_i\mu_j$ accounts for the positions of the marked $m$-labeled edges on faces $i$ and $j$.  The factor $\beta$ accounts for the position of the edge labeled $m$ along the path. The factor $\frac{(m-1)!}{(m-\beta)!}$ accounts for the edge labels appearing on the remaining edges of the path.
\end{itemize}

There is a crucial subtlety that arises in the third case, which we now address. One can discern the issue by considering the sequence of diagrams below, in which $\mu_i$ increases from left to right, relative to $\mu_j$.
\begin{center}
\includegraphics{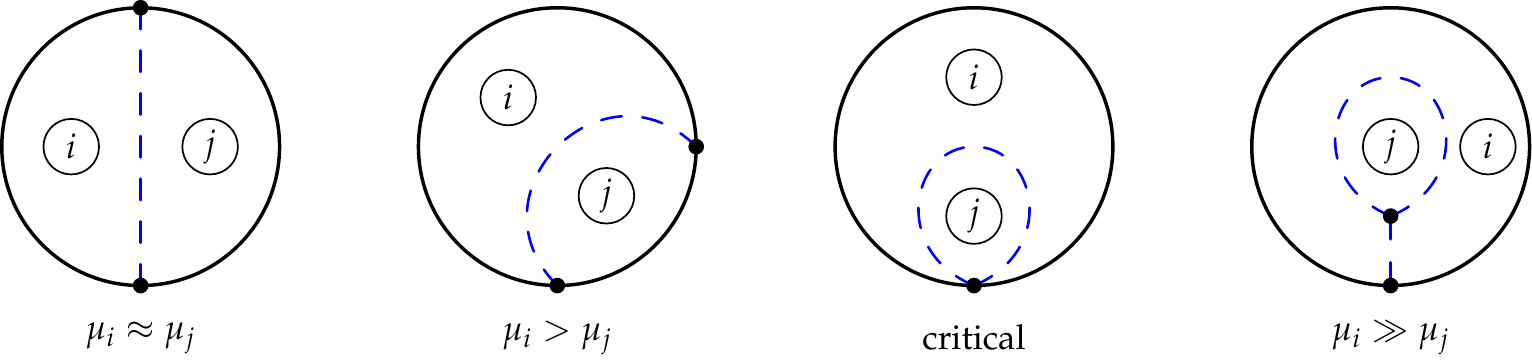}
\end{center}
The factor $\mu_i\mu_j \beta \, \frac{(m-1)!}{(m-\beta)!}$ in the third case actually contributes to diagrams like the one on the far right, in which face $i$ completely surrounds face $j$, or vice versa. In fact, the edge labeled $m$ that we remove can lie anywhere along the dashed path in the schematic diagram. Note that this contributes to the second case, in which the edge labeled $m$ is adjacent to the face labeled $i$ on both sides and its removal leaves the disjoint union of two connected graphs. However, observe that this surplus contribution is precisely equal to the terms from the second case that involve $\widehat{K}_{0,2}$, so one can compensate simply by excluding such terms. Given that we have already witnessed in Example~\ref{ex:unstable} that $\widehat{K}_{0,1} = 0$, we can restrict to the so-called {\em stable} terms in the second case, which are precisely those that do not involve $\widehat{K}_{0,1}$ or $\widehat{K}_{0,2}$.

Therefore, to obtain all fatgraphs in $\text{PFat}_{g,n}(\bm{\mu})$ exactly once, it is necessary to perform the reconstruction process
\begin{itemize}
\item in the first case for all values of $i$ and $\alpha + \beta + \gamma = \mu_i + 1$;
\item in the second case for all {\em stable} values of $i$, $\alpha + \beta + \gamma = \mu_i + 1$, $g_1 + g_2 = g$, and $I \sqcup J = S \setminus \{i\}$; and
\item in the third case for all values of $i$, $j$, and $\alpha + \beta = \mu_i + \mu_j + 1$.
\end{itemize}
We obtain the cut-and-join recursion for pruned simple Hurwitz numbers by summing up over all these contributions.
\end{proof}

\subsection{The pruning correspondence}

Despite the fact that $K_{g,n}(\bm{\mu})$ only counts a subset of the branching graphs enumerated by $H_{g,n}(\bm{\mu})$, simple Hurwitz numbers can be determined from their pruned counterparts, and vice versa. The crucial observation is the following combinatorial result.

\begin{proposition} \label{th:pruning}
The following equation holds for all $(g,n) \neq (0,1)$ and $\bm{\mu} = (\mu_1, \ldots, \mu_n)$.
\[
\widehat{H}_{g,n}(\mu_1, \ldots, \mu_n) = \sum_{\nu_1, \ldots, \nu_n = 1}^{\mu_1, \ldots, \mu_n} \widehat{K}_{g,n}(\nu_1, \ldots, \nu_n) \prod_{i=1}^n \frac{ \mu_i^{\mu_i-\nu_i}}{(\mu_i-\nu_i)!}
\]
\end{proposition}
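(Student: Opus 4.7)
The plan is to prove the identity by a weighted combinatorial correspondence between the branching graphs enumerated by $H_{g,n}(\bm{\mu})$ and pairs consisting of a pruned branching graph together with labeled pendant forests. Given $\Gamma \in \text{Fat}_{g,n}(\bm{\mu})$, I define its pruning $\pi(\Gamma)$ by iteratively removing every essential-degree-$1$ vertex together with its unique incident (full) edge, until all remaining vertices have essential degree at least $2$. One then re-indexes the surviving $m'$ edges and the cyclic half-edge labels at each surviving vertex in the unique order-preserving way from $\{1, \ldots, m\}$ down to $\{1, \ldots, m'\}$, producing a genuine element $\pi(\Gamma) \in \text{PFat}_{g,n}(\bm{\nu})$. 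Here $\nu_i = \mu_i - c_i$, where $c_i$ is the number of vertices pruned off inside face $i$, so $m' = 2g - 2 + n + |\bm{\nu}|$ as required. The proposition reduces to showing that for each $\Gamma' \in \text{PFat}_{g,n}(\bm{\nu})$,
\[
\#\pi^{-1}(\Gamma') \;=\; \frac{m!}{m'!}\prod_{i=1}^n \frac{\mu_i^{c_i}}{c_i!},
\]
after which summing over $\bm{\nu} \leq \bm{\mu}$ and dividing by $m!$ delivers the identity.

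The fibre count factorises into two independent contributions. The first is the \emph{label-inflation factor} $\binom{m}{m'}(m-m')! = \frac{m!}{m'!}$, coming from the choice of which $m'$ of the $m$ edge labels decorate the edges of $\Gamma'$ (in the unique order-preserving way) and how the remaining $m - m' = \sum_i c_i$ labels are distributed among the pendant edges. The second contribution enumerates, for each face $i$, all rooted labeled forests of $c_i$ new vertices attached to positions along the boundary of face $i$ of $\Gamma'$, together with the observation that the cyclic half-edge structure at each newly added vertex is forced by the labels of its incident edges. A Cayley/Lagrange-type enumeration should show that this contributes $\prod_i \frac{\mu_i^{c_i}}{c_i!}$; Example~\ref{ex:unstable} already verifies the extreme cases ($c_i = 0$ gives the factor $1$, and the $(0,2)$ case can be checked directly against the Eulerian number formula).

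The main obstacle is precisely this second factor: rigorously enumerating the attached rooted labeled forests with the correct attachment data. One must track how the marked $m$-label of each face transforms under pruning (since it may lie on a pendant edge that is removed, in which case it is re-chosen among the $\nu_i$ occurrences of $m'$ on the pruned face), and verify that the cyclic half-edge data at newly added vertices is always consistently and uniquely recoverable. A conceptually cleaner, essentially equivalent route is to prove the generating-function identity $\ch_{g,n}(\bm{x}) = \ck_{g,n}(\bm{z})$ under the substitution $x_i = z_i e^{-z_i}$, where $\ck_{g,n}(\bm{z}) := \sum_{\bm{\nu}} \widehat{K}_{g,n}(\bm{\nu}) \prod_i \tfrac{z_i^{\nu_i}}{\nu_i}$. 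Lagrange inversion applied to $z_i = x_i e^{z_i}$ gives $[x_i^{\mu_i}] z_i^{\nu_i} = \tfrac{\nu_i}{\mu_i} \cdot \tfrac{\mu_i^{\mu_i-\nu_i}}{(\mu_i-\nu_i)!}$, and incorporating the $1/\mu_i$ weights from the generating functions reproduces exactly the claimed coefficient identity, reducing the entire proposition to a single rational-change-of-variables equality.
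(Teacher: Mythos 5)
Your first route is the paper's route: prune by iteratively deleting essential-degree-one vertices, show the fibre of the pruning map over a fixed $\Gamma' \in \text{PFat}_{g,n}(\bm{\nu})$ has size $\frac{m!}{m'!\,\prod_i (\mu_i-\nu_i)!}\prod_i \mu_i^{\mu_i-\nu_i}$, and split this into a label-distribution factor times a per-face attachment count. But the heart of the proposition is exactly the step you defer with ``a Cayley/Lagrange-type enumeration should show that this contributes $\prod_i \mu_i^{c_i}/c_i!$'': one must prove that the number of ways to attach a given \emph{set} of $c_i = \mu_i-\nu_i$ labeled pendant edges (as rooted forests hanging off the boundary of face $i$, with consistent cyclic half-edge data and with the marked $m$-label relocated) is exactly $\mu_i^{c_i}$. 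The paper does this by cutting the pruned face's perimeter into $\nu_i$ intervals, each containing every edge label once, contracting each interval to a root, and identifying the attachment data with a rooted forest on $\mu_i$ labeled vertices with $\nu_i$ labeled roots; the generalised Cayley formula $T(\mu,\nu)=\nu\mu^{\mu-\nu-1}$ then gives $\frac{\mu}{\nu}T(\mu,\nu)=\mu^{\mu-\nu}$, where the extra $\mu/\nu$ is precisely the relocation of the marked $m$-label from $\nu$ to $\mu$ possible positions. Without this bijection (and the marked-label bookkeeping you correctly flag as an obstacle) the proof is incomplete, since this is the only non-formal input.

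The proposed ``cleaner route'' does not close the gap: it is circular. Defining $\mathcal{K}_{g,n}(\bm{z}) = \sum_{\bm{\nu}} \widehat{K}_{g,n}(\bm{\nu})\prod_i z_i^{\nu_i}/\nu_i$ and applying Lagrange inversion to $x=ze^{-z}$ shows only that the identity $\mathcal{H}_{g,n}(\bm{x})=\mathcal{K}_{g,n}(\bm{z})$ is \emph{equivalent} to the proposition, not that either holds. In the paper the logical order is the reverse: the generating-function statement (Proposition~\ref{pro:expansion}) is \emph{deduced from} Proposition~\ref{th:pruning} by exactly the residue/Lagrange computation you describe. To use the change of variables as a proof you would need an independent argument that the coefficients of the $z$-expansion of $\omega_{g,n}$ are the combinatorially defined pruned Hurwitz numbers, and that is again the content of the forest bijection.
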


\begin{proof}
We begin by writing the proposition in the following way.
\[
H_{g,n}(\bm{\mu}) = \sum_{\nu_1, \ldots, \nu_n = 1}^{\mu_1, \ldots, \mu_n} K_{g,n}(\bm{\nu}) \, \frac{(2g-2+n+|\bm{\mu}|)!}{(2g-2+n+|\bm{\nu}|)! \, (\mu_1-\nu_1)! \, \cdots \, (\mu_n-\nu_n)!} \, \prod_{i=1}^n \mu_i^{\mu_i-\nu_i}
\]
This equation encapsulates the fact that, from a branching graph, one obtains a unique pruned branching graph by repeatedly removing vertices with degree one and their incident edges. The process continues until all of the vertices of the resulting branching graph have degree at least two. When removing an edge with a given label, we also remove all half-edges with the corresponding label. It is then necessary to relabel the edges and half-edges in the resulting branching graph so that the new labels come from a set of the form $\{1, 2, \ldots, m\}$, while maintaining the correct cyclic ordering of the half-edges at every vertex. We refer to the process described above as {\em pruning} and observe that it can be carried out one face at a time.

Conversely, every branching graph of type $(g; \bm{\mu})$ can be reconstructed from a pruned branching graph of type $(g; \bm{\nu})$ for $1 \leq \nu_i \leq \mu_i$ by adding $\mu_i - \nu_i$ edges to face $i$, for all $i = 1, 2, \ldots, n$. When adding an edge with a given label, we also add all possible half-edges with the corresponding label. It is then necessary to relabel the edges and half-edges in the resulting branching graph so that the new labels come from a set of the form $\{1, 2, \ldots, m\}$, while maintaining the correct cyclic ordering of the half-edges at every vertex.

There are $K_{g,n}(\bm{\nu})$ possibilities for the pruned branching graph and the factor
\[
\frac{(2g-2+n+|\bm{\mu}|)!}{(2g-2+n+|\bm{\nu}|)! \, (\mu_1-\nu_1)! \, \cdots \, (\mu_n-\nu_n)!}
\]
accounts for the number of ways to choose the set of edge labels for the underlying pruned branching graph as well as the set of $\mu_i - \nu_i$ edge labels to be added to face $i$ for $i = 1, 2, \ldots, n$. 

All that remains is to show that the factor $\mu^{\mu - \nu}$  is equal to the number of ways to add $\mu - \nu$ edges to a pruned face with perimeter $\nu$. To do this, we invoke the following generalisation of Cayley's formula.
\begin{quote}
Let $N \subseteq M$ be sets of size $\nu \leq \mu$, respectively. Then the number of rooted forests on $\mu$ vertices labeled by $M$ with $\nu$ components whose roots are labeled by $N$ is precisely $T(\mu, \nu) = \nu \mu^{\mu - \nu - 1}$.
\end{quote}
See for example \cite{AZiPro} for a proof of the formula for $T(\mu, \nu)$.

Consider a face of perimeter $\mu$ in a branching graph that has perimeter $\nu$ after pruning. By the definition of a branching graph, each edge label occurs precisely $\nu$ times in the pruned face, so we can divide its perimeter into $\nu$ disjoint {\em intervals}, each of which contains all of the edge labels. From the unpruned face of perimeter $\mu$, construct a rooted forest by contracting each of the intervals to a root vertex and reassign each edge label to the adjacent vertex that is further away from the root. We thus obtain a rooted forest with $\nu$ components, $\mu-\nu$ edges, and hence $\mu$ vertices. The $\nu$ roots are labeled by their corresponding intervals, while the remaining $\mu-\nu$ vertices are labeled by distinct positive integers derived from the original edge labels.

As an example, consider the diagram below left, which shows a pruned face of perimeter $\nu = 3$ with $\mu-\nu=8$ edges added to create a face of perimeter $\mu = 11$. The corresponding rooted forest is shown below right.
\begin{center}
\includegraphics{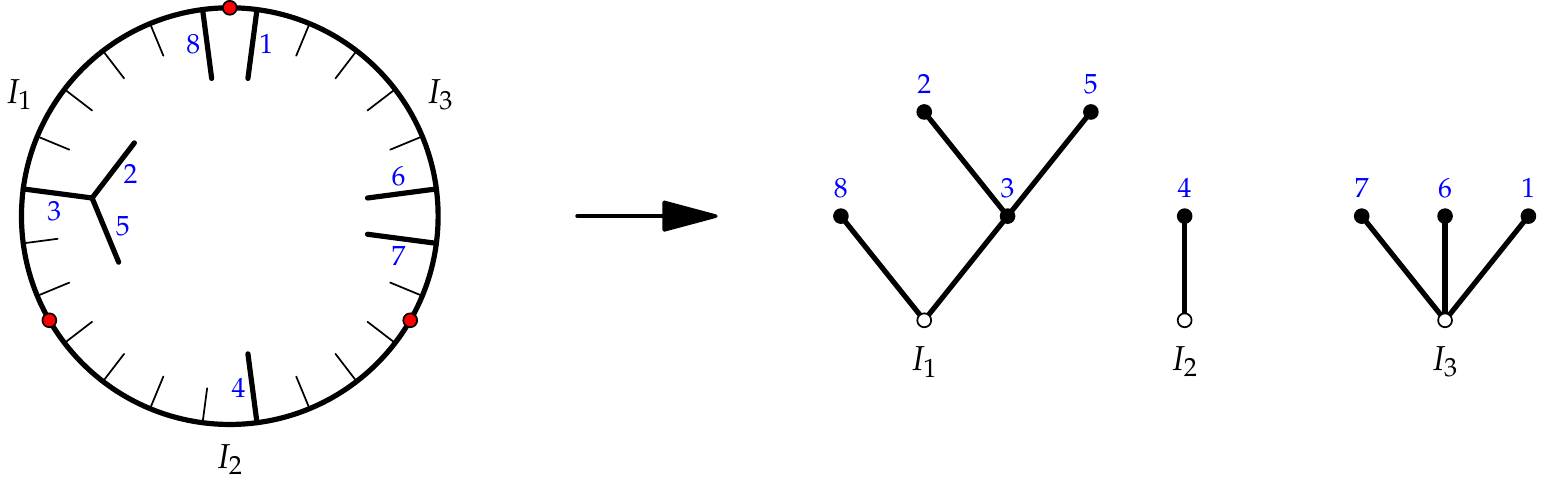}
\end{center}

So there are $\frac{\mu}{\nu}T(\mu, \nu) = \mu^{\mu-\nu}$ possibilities for the resulting rooted forest. Conversely, the process may be reversed to construct a face of perimeter $\mu$ from a pruned face of perimeter $\nu$ together with a labeled rooted forest with $\nu$ components and $\mu$ vertices. The edge labels determine the cyclic orientations of the edges adjacent to a given vertex.
\end{proof}

Note that the system of linear equations in Proposition~\ref{th:pruning} relating the values of $\widehat{H}_{g,n}$ to those of $\widehat{K}_{g,n}$ is triangular in the sense that $\widehat{H}_{g,n}(\bm{\mu})$ depends only on values of $\widehat{K}_{g,n}(\bm{\nu})$ for which $\bm{\nu} \leq \bm{\mu}$ in the lexicographical order. Therefore, all of the information stored in the simple Hurwitz numbers is theoretically also stored in their pruned counterparts.

Theorem~\ref{th:bouchardmarino} states that the simple Hurwitz numbers comprise a natural enumerative problem in the context of the Eynard--Orantin topological recursion. The following result demonstrates that the same is true of the pruned simple Hurwitz numbers and, furthermore, that they can be derived from the same spectral curve.

\begin{proposition} \label{pro:expansion}
For $2g-2+n>0$, the expansions of the simple Hurwitz differentials of equation~(\ref{eq:differentials}) at the point $z_1 = z_2 = \cdots = z_n = 0$ satisfy
\[
\omega_{g,n} = \sum_{\mu_1, \ldots, \mu_n = 1}^\infty \widehat{K}_{g,n}(\mu_1, \ldots, \mu_n) \prod_{i=1}^n z_i^{\mu_i-1} \, \dd z_i.
\]
\end{proposition}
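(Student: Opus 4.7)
The plan is to bootstrap from Theorem~\ref{th:bouchardmarino} using the pruning correspondence of Proposition~\ref{th:pruning}, and to reduce everything to a one-variable Lagrange inversion identity on the spectral curve $x = ze^{-z}$. Starting from
\[
\omega_{g,n} = \sum_{\mu_1, \ldots, \mu_n \geq 1} \widehat{H}_{g,n}(\mu_1, \ldots, \mu_n) \prod_k x_k^{\mu_k - 1} \, \dd x_k,
\]
I would substitute the identity of Proposition~\ref{th:pruning} into each Hurwitz coefficient and interchange the order of summation to obtain
\[
\omega_{g,n} = \sum_{\nu_1, \ldots, \nu_n \geq 1} \widehat{K}_{g,n}(\nu_1, \ldots, \nu_n) \prod_{i=1}^n \left( \sum_{\mu_i \geq \nu_i} \frac{\mu_i^{\mu_i - \nu_i}}{(\mu_i - \nu_i)!} \, x_i^{\mu_i - 1} \, \dd x_i \right).
\]
It then suffices to establish the one-variable identity
\[
\sum_{\mu \geq \nu} \frac{\mu^{\mu - \nu}}{(\mu - \nu)!} \, x^{\mu - 1} \, \dd x = z^{\nu - 1} \, \dd z, \qquad \nu \geq 1,
\]
for the change of variable $x = z e^{-z}$.

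To verify this identity, I would antidifferentiate both sides (both vanish at the origin for $\nu \geq 1$), reducing to
\[
\sum_{\mu \geq \nu} \frac{\mu^{\mu - \nu - 1}}{(\mu - \nu)!} \, x^{\mu} = \frac{z^\nu}{\nu}.
\]
Since $x = ze^{-z}$ is equivalent to $z = x e^{z} = x \phi(z)$ with $\phi(z) = e^z$, the Lagrange--B\"urmann formula yields
\[
[x^\mu] \frac{z^\nu}{\nu} = \frac{1}{\mu} [z^{\mu - 1}] \, z^{\nu - 1} e^{\mu z} = \frac{1}{\mu} [z^{\mu - \nu}] \, e^{\mu z} = \frac{\mu^{\mu - \nu - 1}}{(\mu - \nu)!},
\]
matching the coefficients on the left-hand side and completing the one-variable step.

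The main subtlety is purely analytic bookkeeping rather than any deep obstacle: one must justify the interchange of summations and treat both expansions as formal power series in the $x_i$ or $z_i$ around the origin, where they are known to converge by the generating-function formalism. The hypothesis $2g - 2 + n > 0$ ensures that Proposition~\ref{th:pruning} applies (it excludes the unstable $(0,1)$ case, and $(0,2)$ is similarly excluded because $\omega_{0,2}$ is defined separately as the Bergman kernel in the topological recursion). Once the combinatorial identity is reduced as above, the proof is a direct application of Lagrange inversion applied termwise in each variable.
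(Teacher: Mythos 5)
Your proof is correct, and it rests on the same essential input as the paper's --- the pruning correspondence of Proposition~\ref{th:pruning} together with Theorem~\ref{th:bouchardmarino} --- but the mechanism is reversed. The paper defines the candidate multidifferential $\overline{\omega}_{g,n}$ by the $z$-expansion with coefficients $\widehat{K}_{g,n}$ and then \emph{extracts} its $x$-coefficients by computing $\mathop{\text{Res}}\,\overline{\omega}_{g,n}\prod x_i^{-\mu_i}$; this only requires the elementary expansion $x^{-\mu}=z^{-\mu}e^{\mu z}$ and the residue calculus, after which Proposition~\ref{th:pruning} identifies the answer as $\widehat{H}_{g,n}$. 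You instead go in the forward direction: substitute the pruning correspondence into the $\widehat{H}_{g,n}$-series and \emph{resum}, which forces you to expand $z^{\nu}$ as a power series in $x=ze^{-z}$, and hence to invoke the Lagrange--B\"urmann formula to get $[x^\mu]\,z^\nu/\nu=\mu^{\mu-\nu-1}/(\mu-\nu)!$. Your application of Lagrange inversion is correct ($z=x e^z$, $\phi=e^z$), and differentiating the resulting identity recovers exactly the one-variable kernel $\sum_{\mu\geq\nu}\frac{\mu^{\mu-\nu}}{(\mu-\nu)!}x^{\mu-1}\,\dd x=z^{\nu-1}\,\dd z$ that you need. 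What the paper's residue formulation buys is the avoidance of Lagrange inversion altogether (the inverse expansion is never needed when you pair against $x^{-\mu}$); what your route buys is a direct, termwise derivation of the $z$-expansion without introducing an auxiliary object $\overline{\omega}_{g,n}$ and arguing that two multidifferentials with the same residues coincide. The interchange of summation you flag is harmless here, as the triangular structure ($\nu_i\leq\mu_i$) and the convergence of $\sum\mu^{\mu-\nu}x^{\mu-1}/(\mu-\nu)!$ for $|x|<e^{-1}$ make everything legitimate both formally and analytically.
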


\begin{proof}
Recall that the simple Hurwitz differentials are defined in equation~(\ref{eq:differentials}) by the formula
\[
\omega_{g,n} = \sum_{\mu_1, \ldots, \mu_n = 1}^\infty \widehat{H}_{g,n}(\mu_1, \ldots, \mu_n) \prod_{i=1}^n x_i^{\mu_i-1} \, \dd x_i.
\]
Furthermore, recall that $\omega_{g,n}$ is a meromorphic multidifferential on $C^n$, where $C$ is the rational spectral curve given parametrically by the equation $x(z) = z \exp(-z)$ and $y(z) = z$. We let $z_1, z_2, \ldots, z_n$ be the rational coordinates on $C^n$ and define $x_1 = x(z_1), x_2 = x(z_2), \ldots, x_n = x(z_n)$.

Now define another family of multidifferentials $\overline{\omega}_{g,n}$ on $C^n$ by the following local expansion at the point $z_1 = z_2 = \cdots = z_n = 0$.\footnote{Proposition~\ref{th:polynomial} below asserts that $\widehat{K}_{g,n}$ is a polynomial, so the equation does indeed define an analytic multidifferential.}
\[
\overline{\omega}_{g,n} = \sum_{\mu_1, \ldots, \mu_n = 1}^\infty \widehat{K}_{g,n}(\mu_1, \ldots, \mu_n) \prod_{i=1}^n z_i^{\mu_i-1} \dd z_i
\]

Of course, we wish to prove that $\overline{\omega}_{g,n} = \omega_{g,n}$ and we proceed by calculating the following residue.
\begin{align*}
\mathop{\text{Res}}_{x_1=0} \, \cdots \, \mathop{\text{Res}}_{x_n=0} \overline{\omega}_{g,n} \prod_{i=1}^n x_i^{-\mu_i} &= \mathop{\text{Res}}_{z_1=0} \, \cdots \, \mathop{\text{Res}}_{z_n=0} \sum_{\nu_1, \ldots, \nu_n = 1}^\infty \widehat{K}_{g,n}(\nu_1, \ldots, \nu_n) \prod_{i=1}^n  z_i^{\nu_i-1} \, \dd z_i \left[ z_i \exp(-z_i) \right]^{-\mu_i} \\
&= \mathop{\text{Res}}_{z_1=0} \, \cdots \, \mathop{\text{Res}}_{z_n=0} \sum_{\nu_1, \ldots, \nu_n = 1}^\infty \widehat{K}_{g,n}(\nu_1, \ldots, \nu_n) \prod_{i=1}^n z_i^{\nu_i-\mu_i-1} \, \dd z_i \, \sum_{m_i=0}^\infty \frac{\mu_i^{m_i}}{m_i!} z_i^{m_i} \\
&= \sum_{\nu_1, \ldots, \nu_n = 1}^{\mu_1, \ldots, \mu_n} \widehat{K}_{g,n}(\nu_1, \ldots, \nu_n) \prod_{i=1}^n \mathop{\text{Res}}_{z_i=0} z_i^{\nu_i-\mu_i-1} \, \dd z_i \, \sum_{m_i=0}^\infty \frac{\mu_i^{m_i}}{m_i!} z_i^{m_i} \\
&= \sum_{\nu_1, \ldots, \nu_n = 1}^{\mu_1, \ldots, \mu_n} \widehat{K}_{g,n}(\nu_1, \ldots, \nu_n) \prod_{i=1}^n \frac{\mu_i^{\mu_i-\nu_i}}{(\mu_i-\nu_i)!} \\
&= \widehat{H}_{g,n}(\mu_1, \ldots, \mu_n) 
\end{align*}
The last equality here is a direct consequence of Proposition~\ref{th:pruning}. It now follows from the above residue calculation that
\[
\overline{\omega}_{g,n} = \sum_{\mu_1, \ldots, \mu_n = 1}^\infty \widehat{H}_{g,n}(\mu_1, \ldots, \mu_n) \prod_{i=1}^n  x_i^{\mu_i-1} \, \dd x_i = \omega_{g,n}. \qedhere
\]
\end{proof}

In Example~\ref{th:polynomial}, we observed that the linear factor $m(g, \bm{\mu}) = 2g-2+n+|\bm{\mu}|$ on the left hand side of the cut-and-join recursion divides the right hand side in the case $(g,n) = (0,4)$, thereby establishing the fact that $\widehat{K}_{0,4}$ is a polynomial. In fact, we will see that this phenomenon is general.

\begin{lemma}
For non-negative integers $d$, define the sequence $q_d(1), q_d(2), q_d(3), \ldots$ by the triangular system of linear equations
\[
\frac{\mu^{\mu+d+1}}{\mu!} = \sum_{\nu=1}^\mu q_d(\nu) \, \frac{\nu \mu^{\mu-\nu}}{(\mu-\nu)!}, \qquad \text{for } \mu = 1, 2, 3, \ldots.
\]
Then $q_d$ is a polynomial of degree $2d$.
\end{lemma}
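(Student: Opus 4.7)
The plan is to recast the defining triangular system as an identity between generating functions in the variables $x$ and $z$ related by $x = z e^{-z}$, the same parametrisation used for the spectral curve earlier in the paper.

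First, I would establish the pair of generating function identities
\[
z = \sum_{\mu\geq 1} \frac{\mu^{\mu-1}}{\mu!} x^\mu
\qquad\text{and}\qquad
\frac{\nu z^\nu}{1-z} = \nu \sum_{\mu\geq \nu} \frac{\mu^{\mu-\nu}}{(\mu-\nu)!} x^\mu,
\]
the first being Lagrange inversion for the tree function and the second obtained by applying $x\,d/dx$ to the Lagrange formula for $z^\nu$ together with the identity $x\,dz/dx = z/(1-z)$, which follows from differentiating $x = z e^{-z}$. Multiplying the defining equation of the lemma by $x^\mu$ and summing over $\mu\geq 1$ then transforms it into
\[
\sum_{\mu\geq 1}\frac{\mu^{\mu+d+1}}{\mu!}\, x^\mu \;=\; \frac{1}{1-z}\sum_{\nu\geq 1}\nu\, q_d(\nu)\, z^\nu.
\]

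Second, the left-hand side is $(x\tfrac{d}{dx})^{d+2} z$. Since $x\tfrac{d}{dx}$ acts on functions of $z$ as $\frac{z}{1-z}\tfrac{d}{dz}$, an induction on $k$ yields
\[
\Bigl(\tfrac{z}{1-z}\tfrac{d}{dz}\Bigr)^{k} z = \frac{z\, R_k(z)}{(1-z)^{2k-1}},
\]
where $R_1 = 1$ and $R_{k+1}(z) = R_k(z) + (2k-2)\, z R_k(z) + z(1-z) R_k'(z)$. The same recursion gives $\deg R_k \le k-2$ for $k\ge 2$, and $R_{k+1}(1) = (2k-1) R_k(1)$, so in particular $R_{d+2}(1) = (2d+1)!! \neq 0$. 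Combining the two steps I obtain
\[
\sum_{\nu \ge 1} \nu q_d(\nu)\, z^\nu = \frac{z\, R_{d+2}(z)}{(1-z)^{2d+2}}.
\]
Expanding via $(1-z)^{-(2d+2)} = \sum_{k\ge 0} \binom{k+2d+1}{2d+1} z^k$ and writing $R_{d+2}(z) = \sum_{j=0}^d r_j z^j$, one reads off
\[
\nu q_d(\nu) = \sum_{j=0}^d r_j \binom{\nu + 2d - j}{2d+1}.
\]
This is a polynomial in $\nu$ of degree at most $2d+1$; its $\nu^{2d+1}$ coefficient is $R_{d+2}(1)/(2d+1)! = 1/(2^d\, d!)$, hence nonzero; and it vanishes at $\nu = 0$ because of the overall factor of $z$ in the generating function. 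Dividing by $\nu$ yields a polynomial in $\nu$ of degree exactly $2d$, as required.

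The main obstacle is the bookkeeping in the second step: one needs simultaneously the degree bound $\deg R_{d+2} \le d$ and the non-vanishing $R_{d+2}(1) \neq 0$, because the first controls that $q_d$ has degree \emph{at most} $2d$, while the second is what forces the degree to be \emph{exactly} $2d$. Both facts fall out of the same one-step recursion for $R_k$, so the difficulty is really unified in that induction.
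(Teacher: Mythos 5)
Your proof is correct, and it takes a genuinely different route from the paper's. The paper first establishes the base case $q_0(\nu)=1$ by a bijective argument (trees with a marked initial and terminal vertex versus rooted forests, using $T(\mu,\nu)=\nu\mu^{\mu-\nu-1}$), then multiplies the defining relation by $\mu$ and telescopes to obtain the recursions $q_{d+1}(\nu)=\sum_{i=1}^{\nu} i\,q_d(i)$ and $q_{d+1}(\nu)=q_{d+1}(\nu-1)+\nu q_d(\nu)$, from which polynomiality of degree $2d$ follows by induction. You instead pass through the tree function $x=z e^{-z}$: Lagrange inversion converts the triangular system into the single identity $\sum_{\nu}\nu q_d(\nu)z^{\nu}=(1-z)\bigl(x\tfrac{d}{dx}\bigr)^{d+2}z=\frac{z R_{d+2}(z)}{(1-z)^{2d+2}}$, and the degree statement is read off from the order of the pole at $z=1$. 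I checked your recursion $R_{k+1}=R_k+(2k-2)zR_k+z(1-z)R_k'$, the degree bound $\deg R_{k}\le k-2$, the evaluation $R_{d+2}(1)=(2d+1)!!$, and the vanishing of $\sum_j r_j\binom{\nu+2d-j}{2d+1}$ at $\nu=0$ (each binomial factor contains $\nu$ in its numerator product since $0\le j\le 2d-j$); all are right. Your method is closer in spirit to the spectral-curve viewpoint the paper uses elsewhere (polynomiality from rationality and pole structure, as in the quasi-polynomiality argument for the orbifold case), and it yields the leading coefficient $a_d=R_{d+2}(1)/(2d+1)!=1/(2d)!!$ as a byproduct — a fact the paper states separately in Proposition~\ref{pro:qpolynomials} without proof. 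What you lose relative to the paper is the clean recursion $q_{d+1}(\nu)=\sum_{i\le\nu}i\,q_d(i)$, which is what connects $q_d$ to the Stirling numbers of the second kind and is reused in defining the auxiliary polynomials $P_i$ and $P_{i,j}$ for the Witten--Kontsevich argument.
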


\begin{proof}
First, observe that $q_0(\nu) = 1$ for $\nu = 1, 2, 3, \ldots$, since this fact is equivalent to the identity
\[
\mu^2 T(\mu, 1) = \sum_{\nu=1}^\mu \frac{\mu!}{(\mu-\nu)!} \, T(\mu, \nu), \qquad \text{for } \mu = 1, 2, 3, \ldots.
\]
As in the proof of Proposition~\ref{th:pruning}, $T(\mu, \nu) = \nu \mu^{\mu-\nu-1}$ denotes the number of rooted forests on $\mu$ labeled vertices with $\nu$ labeled roots. We interpret the left hand side of this equation as the number of trees with vertices labeled $1, 2, \ldots, \mu$, along with a choice of an initial vertex and a terminal vertex, which are allowed to coincide. Given such a tree, suppose that there are $\nu$ vertices on the unique path from the initial vertex to the terminal vertex. Note that $1 \leq \nu \leq \mu$ and that there are $\frac{\mu!}{(\mu-\nu)!}$ possibilities for the labels of the vertices along the path. Removing the edges on the path yields a rooted forest, whose roots are precisely those vertices on the path. The number of such rooted forests is $T(\mu, \nu)$ by definition, which leads to the expression on the right hand side of this equation.

Second, consider the following sequence of equalities.
\begin{align*}
\sum_{\nu=1}^\mu q_{d+1}(\nu) \, \frac{\nu \mu^{\mu-\nu}}{(\mu-\nu)!} =& \mu \sum_{\nu=1}^\mu q_d(\nu) \, \frac{\nu \mu^{\mu-\nu}}{(\mu-\nu)!} \\
=& \sum_{\nu=1}^\mu \nu q_d(\nu) \, \frac{\nu \mu^{\mu-\nu}}{(\mu-\nu)!} + \sum_{\nu=1}^\mu (\mu-\nu) q_d(\nu) \, \frac{\nu \mu^{\mu-\nu}}{(\mu-\nu)!} \\
=& \sum_{\nu=1}^\mu \nu q_d(\nu) \, \frac{\nu \mu^{\mu-\nu}}{(\mu-\nu)!} + \mu \sum_{\nu=1}^\mu (\nu-1) q_d(\nu-1) \, \frac{\mu^{\mu-\nu}}{(\mu-\nu)!} \\
=& \sum_{\nu=1}^\mu \nu q_d(\nu) \, \frac{\nu \mu^{\mu-\nu}}{(\mu-\nu)!} + \sum_{\nu=1}^\mu (\nu-1) q_d(\nu-1) \, \frac{\nu \mu^{\mu-\nu}}{(\mu-\nu)!} + \mu \sum_{\nu=1}^\mu (\nu-2) q_d(\nu-2) \, \frac{\mu^{\mu-\nu}}{(\mu-\nu)!} \\
=& \sum_{\nu=1}^\mu \nu q_d(\nu) \, \frac{\nu \mu^{\mu-\nu}}{(\mu-\nu)!} + \sum_{\nu=1}^\mu (\nu-1) q_d(\nu-1) \, \frac{\nu \mu^{\mu-\nu}}{(\mu-\nu)!} + \cdots + \sum_{\nu=1}^\mu 1 q_d(1) \, \frac{\nu \mu^{\mu-\nu}}{(\mu-\nu)!} \\
=& \sum_{\nu=1}^\mu \left[ \nu q_d(\nu) + (\nu-1) q_d(\nu-1) + \cdots + 1 q_d(1) \right] \, \frac{\nu \mu^{\mu-\nu}}{(\mu-\nu)!}
\end{align*}
Since the sequences $q_d(1), q_d(2), q_d(3), \ldots$ are defined by triangular systems of linear equations, we may deduce from the above sequence of equalities that
\begin{equation} \label{eq:qpolynomials}
q_{d+1}(\nu) = \sum_{i=1}^\nu i q_d(i) \qquad \text{and} \qquad q_{d+1}(\nu) = q_{d+1}(\nu-1) + v q_d(\nu).
\end{equation}
Using the base case $q_0(\nu) = 1$ and equation~(\ref{eq:qpolynomials}), it is now straightforward to prove by induction that $q_d$ is a polynomial of degree $2d$.
\end{proof}

\begin{proposition} \label{th:polynomial}
For $2g-2+n > 0$, the normalised pruned simple Hurwitz number $\widehat{K}_{g,n}(\mu_1, \mu_2,  \ldots, \mu_n)$ is a polynomial in $\mu_1, \mu_2, \ldots, \mu_n$ of degree $6g-6+3n$.
\end{proposition}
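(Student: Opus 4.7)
The plan is to combine the ELSV formula (Proposition~\ref{th:elsv}), the pruning correspondence (Proposition~\ref{th:pruning}), and the lemma just proved about the polynomials $q_d$, in order to obtain an explicit polynomial expression for $\widehat{K}_{g,n}$ from which the asserted degree is read off by inspection. The key conceptual point is that the functions $q_d$ introduced in the lemma are engineered precisely to invert the pruning relation against the ELSV basis functions $\mu^{\mu+d+1}/\mu!$, so that once ELSV is plugged in, the pruning relation produces $\widehat{K}_{g,n}$ as a polynomial automatically.

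First, I would invoke ELSV to write $\widehat{H}_{g,n}(\mu) = \prod_i \frac{\mu_i^{\mu_i+1}}{\mu_i!} \cdot P_{g,n}(\mu)$, and observe that $P_{g,n}$ is a polynomial of total degree at most $\dim \overline{\modm}_{g,n} = 3g-3+n$, since expanding $\prod_i (1-\mu_i\psi_i)^{-1}$ against the Hodge class contributes only monomials $\prod_i \mu_i^{k_i}$ with $\sum k_i \leq 3g-3+n$. Writing $P_{g,n}(\mu) = \sum_{\mathbf{d}} c_{\mathbf{d}} \prod_i \mu_i^{d_i}$ where the sum is over tuples $\mathbf{d} = (d_1,\ldots,d_n)$ with $\sum d_i \leq 3g-3+n$, I would then substitute the lemma's identity into each factor and multiply out to obtain
\[
\widehat{H}_{g,n}(\mu) = \sum_{\nu_1,\ldots,\nu_n = 1}^{\mu_1,\ldots,\mu_n} \left[ \prod_{i=1}^n \nu_i \cdot \sum_{\mathbf{d}} c_{\mathbf{d}} \prod_{i=1}^n q_{d_i}(\nu_i) \right] \prod_{i=1}^n \frac{\mu_i^{\mu_i - \nu_i}}{(\mu_i - \nu_i)!}.
\]
Comparing with Proposition~\ref{th:pruning} and using the invertibility of its triangular linear system, the bracketed expression must equal $\widehat{K}_{g,n}(\nu_1,\ldots,\nu_n)$, which is already manifestly a polynomial in $\nu_1,\ldots,\nu_n$.

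The degree count is then immediate: each $q_{d_i}$ has degree $2d_i$ by the lemma, so the inner sum $\sum_{\mathbf{d}} c_{\mathbf{d}} \prod_i q_{d_i}(\nu_i)$ has degree at most $2(3g-3+n) = 6g-6+2n$, and the prefactor $\prod_i \nu_i$ contributes a further $n$, giving the bound $6g-6+3n$. To confirm that equality (rather than merely $\leq$) holds, I would note that the top-degree contribution comes from the pure $\psi$-class intersections $\int_{\overline{\modm}_{g,n}} \prod_i \psi_i^{k_i}$ with $\sum k_i = 3g-3+n$, whose nonvanishing produces nonzero top-degree terms in $q_{d_i}$ and hence in the product. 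I do not anticipate a serious obstacle: all the real work is already contained in the lemma, whose apparently opaque statement was custom-built for this application, so what remains is essentially a bookkeeping exercise; the mildest care needed is to verify the polynomiality hypothesis required for the lemma to apply term by term, which follows directly from ELSV.
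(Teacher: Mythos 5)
Your proposal is correct and follows essentially the same route as the paper: substitute the ELSV formula into the pruning correspondence of Proposition~\ref{th:pruning}, use the lemma's identity $\frac{\mu^{\mu+d+1}}{\mu!} = \sum_\nu q_d(\nu)\frac{\nu\mu^{\mu-\nu}}{(\mu-\nu)!}$ factor by factor, and invoke the triangularity of the system to identify $\widehat{K}_{g,n}(\nu) = \prod_i \nu_i \cdot \sum_{|\mathbf{d}|+\ell=3g-3+n}(-1)^\ell\langle\tau_{d_1}\cdots\tau_{d_n}\lambda_\ell\rangle_g\prod_i q_{d_i}(\nu_i)$, from which the degree bound follows since $\deg q_d = 2d$. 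Your additional remark that the top degree is attained (because distinct $\mathbf{d}$ with $|\mathbf{d}|=3g-3+n$ contribute distinct monomials $\prod_i\nu_i^{2d_i+1}$ with nonnegative coefficients, at least one positive) is a point the paper glosses over, and is a welcome touch of extra care.
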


\begin{proof}
Substitute the ELSV formula --- see Proposition~\ref{th:elsv} --- into the equation in the statement of Proposition~\ref{th:pruning}:
\[
\widehat{H}_{g,n}(\mu_1, \ldots, \mu_n) = \sum_{\nu_1, \ldots, \nu_n}^{\mu_1, \ldots, \mu_n} \widehat{K}_{g,n}(\nu_1, \ldots, \nu_n) \prod_{i=1}^n  \frac{ \mu_i^{\mu_i-\nu_i}}{(\mu_i-\nu_i)!}
\]
to obtain the following:
\[
\sum_{|\mathbf{d}|+\ell = 3g-3+n} (-1)^\ell \langle \tau_{d_1} \cdots \tau_{d_n} \lambda_\ell \rangle_g \, \prod_{i=1}^n \frac{\mu_i^{\mu_i+d_i+1}}{\mu_i !} = \sum_{\nu_1, \ldots, \nu_n}^{\mu_1, \ldots, \mu_n} \widehat{K}_{g,n}(\nu_1, \ldots, \nu_n) \prod_{i=1}^n \frac{ \mu_i^{\mu_i-\nu_i}}{(\mu_i-\nu_i)!}.
\]

From the definition of the polynomials $q_d$ for $d = 0, 1, 2, \ldots$, we may deduce from this equation that for all positive integers $\nu_1, \nu_2, \ldots, \nu_n$,
\begin{equation} \label{eq:prunedelsv}
\widehat{K}_{g,n}(\nu_1, \ldots, \nu_n) = \prod_{i=1}^n\nu_i\cdot\hspace{-3mm}\sum_{|\mathbf{d}|+\ell = 3g-3+n} (-1)^\ell \langle \tau_{d_1} \cdots \tau_{d_n} \lambda_\ell \rangle_g \, \prod_{i=1}^n q_{d_i}(\nu_i).
\end{equation}
Since we have already shown that $q_d$ is a polynomial of degree $2d$, the desired result follows.
\end{proof}

The sequence of polynomials $q_0, q_1, q_2, \ldots$ plays a crucial part in the relation between simple Hurwitz numbers and their pruned counterparts. The numerators of the corresponding triangle of coefficients appear as sequence A202339 in the the On-Line Encyclopedia of Integer Sequences~\cite{oeis}. We state without proof some facts about this sequence of polynomials, which follow from the base case $q_0 = 1$ and the recursive definition in equation~(\ref{eq:qpolynomials}).

\begin{proposition} \label{pro:qpolynomials}
The function $q_d$ is in fact a polynomial of degree $2d$ with leading coefficient  $a_d = \frac{1}{(2d)!!}$. For all non-negative integers $d$ and positive integers $\nu$, $q_d(\nu) = S(\nu+d,\nu)$, the Stirling number of the second kind that represents the number of ways to partition a set with $\nu+d$ objects into $\nu$ non-empty subsets. 
\end{proposition}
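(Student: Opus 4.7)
My plan is to establish both claims by induction on $d$, drawing only on the base case $q_0 \equiv 1$ and on the two equivalent recurrences of equation~(\ref{eq:qpolynomials}), namely $q_{d+1}(\nu) = \sum_{i=1}^\nu i\,q_d(i)$ and $q_{d+1}(\nu) = q_{d+1}(\nu-1) + \nu\,q_d(\nu)$.

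For the degree and leading-coefficient assertion, I would induct on $d$ using the finite-difference form. If $q_d$ has degree $2d$ with leading coefficient $a_d$, then $\nu\,q_d(\nu)$ is a polynomial of degree $2d+1$ with the same leading coefficient. Inverting the first-difference operator (which sends a monomial $c\nu^{k+1}$ to $c(k+1)\nu^k$ plus lower-order terms) shows that $q_{d+1}$ has degree $2d+2$ and leading coefficient $a_{d+1} = a_d/(2d+2)$. Starting from $a_0 = 1$ and iterating yields $a_d = 1/[2\cdot 4\cdots (2d)] = 1/(2d)!!$ as claimed.

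For the identification $q_d(\nu) = S(\nu+d,\nu)$, I would use a nested induction, outer on $d$ and inner on $\nu$, mirroring the classical Stirling recurrence $S(n,k) = k\,S(n-1,k) + S(n-1,k-1)$ together with $S(n,n) = 1$ and $S(n,0) = \delta_{n,0}$. The outer base $d = 0$ reads $q_0(\nu) = 1 = S(\nu,\nu)$. In the inductive step, the inner base $\nu = 0$ follows from the summation form, which gives $q_{d+1}(0) = 0 = S(d+1,0)$. For the inner step, apply the second recurrence and both induction hypotheses to obtain
\[
q_{d+1}(\nu) \;=\; q_{d+1}(\nu-1) + \nu\,q_d(\nu) \;=\; S(\nu+d,\nu-1) + \nu\,S(\nu+d,\nu) \;=\; S(\nu+d+1,\nu),
\]
where the last equality is the Stirling recurrence with $n = \nu+d+1$ and $k = \nu$.

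I anticipate no genuine obstacle, since the whole argument is a routine double induction whose only delicate point is aligning the index shift in $q_d(\nu) = S(\nu+d,\nu)$ with the Stirling recurrence. The two parts of the proposition are of course consistent: it is a classical fact that $S(n, n-d)$ is a polynomial in $n$ of degree $2d$ with leading coefficient $1/(2^d d!) = 1/(2d)!!$, so once the Stirling identification is established the leading-coefficient claim follows from this fact, though I prefer the direct finite-difference derivation above, which avoids invoking external Stirling asymptotics and can be carried out in parallel with the identification.
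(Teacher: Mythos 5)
Your argument is correct and follows exactly the route the paper indicates: the paper states this proposition without proof, remarking only that the facts ``follow from the base case $q_0 = 1$ and the recursive definition in equation~(\ref{eq:qpolynomials})'', and your double induction (using $q_{d+1}(\nu)-q_{d+1}(\nu-1)=\nu q_d(\nu)$ for the degree and leading coefficient, and matching this against the Stirling recurrence $S(n,k)=kS(n-1,k)+S(n-1,k-1)$ for the identification) carries that sketch out faithfully. The index alignment in $q_{d+1}(\nu)=S(\nu+d,\nu-1)+\nu S(\nu+d,\nu)=S(\nu+d+1,\nu)$ is handled correctly, so there is nothing to add.
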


The combinatorial significance of the Stirling numbers of the second kind for pruned simple Hurwitz numbers is presently unclear.

\begin{center}
\begin{tabular}{cl} \toprule
$d$ & $q_d(\nu)$ \\ \midrule
0 & 1 \\
1 & $\frac{1}{2} (\nu^2 + \nu)$ \\
2 & $\frac{1}{24} (3\nu^4 + 10\nu^3 + 9\nu^2 + 2\nu)$ \\
3 & $\frac{1}{48} (\nu^6 + 7\nu^5 + 17\nu^4 + 17\nu^3 + 6\nu^2)$ \\
4 & $\frac{1}{5760} (15\nu^8 + 180\nu^7 + 830\nu^6 + 1848\nu^5 + 2015\nu^4 + 900\nu^3 + 20\nu^2 - 48\nu)$ \\
5 & $\frac{1}{11520} (3\nu^{10} + 55\nu^9 + 410\nu^8 + 1598\nu^7 + 3467\nu^6 + 4055\nu^5 + 2120\nu^4 + 52\nu^3 - 240\nu^2)$ \\ \bottomrule
\end{tabular}
\end{center}

\subsection{Witten--Kontsevich theorem}

We apply the earlier results of this section to give a direct proof of the Witten--Kontsevich theorem, which governs intersection numbers of psi-classes on Deligne--Mumford moduli spaces of curves $\overline{\mathcal M}_{g,n}$. We adopt the following notation of Witten for such intersection numbers, which are defined to be zero unless the condition $|\bm{d}| = \dim_\mathbb{C} \overline{\mathcal M}_{g,n} = 3g-3+n$ is satisfied. 
\[
\langle \tau_{d_1} \cdots \tau_{d_n} \rangle_g = \int_{\overline{\mathcal M}_{g,n}} \psi_1^{d_1} \cdots \psi_n^{d_n}
\]
The psi-classes $\psi_1, \psi_2, \ldots, \psi_n \in H^2(\overline{\mathcal M}_{g,n}; \mathbb{Q})$ are the first Chern classes of the cotangent line bundles at the marked points. For more information on Deligne--Mumford moduli spaces of curves, psi-classes, and the Witten--Kontsevich theorem, see the book of Harris and Morrison~\cite{HMoMod}.

One of the virtues of the cut-and-join recursion for pruned simple Hurwitz numbers is that, although it is primarily an equality of numbers, it can be interpreted as an equality of polynomials in light of Proposition~\ref{th:polynomial}. In order to do this, we define the following functions for non-negative integers $i$ and $j$.
\[
P_i(x, y) = \sum_{\alpha + \beta = x + y + 1} \alpha \beta \, q_i(\alpha) \qquad \text{and} \qquad P_{i,j}(x) = \sum_{\alpha + \beta + \gamma = x + 1} \alpha \beta \gamma \, q_i(\alpha) q_j(\beta)
\]
The following lemma will be useful to determine the leading order behaviour of $P_i$ and $P_{i,j}$.

\begin{lemma}
The expression
\[
\sum_{\alpha_1 + \cdots + \alpha_m = n} \alpha_1^{k_1} \cdots \alpha_m^{k_m}
\]
is a polynomial in $n$ of degree $|\bm{k}| + m - 1$ with leading coefficient $\frac{k_1! \cdots k_m!}{(|\bm{k}| + m - 1)!}$.
\end{lemma}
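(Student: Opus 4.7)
The plan is to proceed by induction on $m$, with the inductive step reducing to a standard Faulhaber-type identity for sums of the form $\sum_{\alpha=1}^{n-1} \alpha^a (n-\alpha)^b$. Denote the sum in question by $S_{\bm{k},m}(n)$, where the summation ranges over tuples with each $\alpha_i \geq 1$. The base case $m=1$ is immediate, since $S_{(k_1),1}(n) = n^{k_1}$ is a polynomial in $n$ of degree $k_1$ with leading coefficient $1 = k_1!/k_1!$.

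For the inductive step, I would split off the last summation variable:
\[
S_{\bm{k},m}(n) = \sum_{\alpha_m=1}^{n-m+1} \alpha_m^{k_m} \, S_{(k_1, \ldots, k_{m-1}), m-1}(n - \alpha_m).
\]
By the inductive hypothesis, the inner factor is a polynomial in $n - \alpha_m$ of degree $D = k_1 + \cdots + k_{m-1} + (m-1) - 1$ with leading coefficient $\frac{k_1! \cdots k_{m-1}!}{D!}$. Expanding in powers of $n - \alpha_m$ and interchanging the summations reduces the task to evaluating, for non-negative integers $a$ and $b$, the sums $T_{a,b}(n) = \sum_{\alpha=1}^{n-1} \alpha^a (n-\alpha)^b$.

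The key auxiliary claim is that $T_{a,b}(n)$ is a polynomial in $n$ of degree $a + b + 1$ with leading coefficient $\frac{a!\,b!}{(a+b+1)!}$. Polynomiality follows because $\alpha^a (n-\alpha)^b$ expands as a polynomial in $\alpha$ with polynomial-in-$n$ coefficients, and each resulting sum $\sum_{\alpha=1}^{n-1} \alpha^j$ is polynomial in $n$ by Faulhaber's formula. The leading coefficient is read off from the Riemann-sum comparison
\[
T_{a,b}(n) = n^{a+b+1} \int_0^1 t^a (1-t)^b \, dt + O(n^{a+b}) = \frac{a!\,b!}{(a+b+1)!} \, n^{a+b+1} + O(n^{a+b}),
\]
using the Beta-function evaluation $B(a+1, b+1) = \frac{a!\,b!}{(a+b+1)!}$.

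Combining the two leading coefficients, only the top-degree term of the inner polynomial contributes to the top-degree term of $S_{\bm{k},m}(n)$, producing leading coefficient
\[
\frac{k_1! \cdots k_{m-1}!}{D!} \cdot \frac{k_m!\, D!}{(D + k_m + 1)!} = \frac{k_1! \cdots k_m!}{(|\bm{k}| + m - 1)!}
\]
and degree $D + k_m + 1 = |\bm{k}| + m - 1$, as desired. The main technical obstacle is bookkeeping the lower-order error terms in the Faulhaber expansion of $T_{a,b}(n)$ carefully enough to secure an honest polynomial equality for every valid $n$, rather than merely an asymptotic identity; an alternative route that sidesteps this issue is to work directly with the generating function $\sum_{n} S_{\bm{k},m}(n)\, z^n = \prod_{i=1}^m \sum_{\alpha \geq 1} \alpha^{k_i} z^\alpha$, whose unique pole at $z=1$ has order $|\bm{k}| + m$ and leading Laurent coefficient $\prod_i k_i!$, so that partial fractions together with $[z^n](1-z)^{-N} = \binom{n+N-1}{N-1}$ yields polynomiality and the claimed leading coefficient simultaneously.
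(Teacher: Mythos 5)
Your argument is correct, but it follows a genuinely different route from the paper. The paper's proof is essentially a one-step change of basis: write each monomial as $\alpha^{k} = k!\binom{\alpha}{k} + (\text{lower-order binomial coefficients})$ and invoke the closed-form convolution identity $\sum_{\alpha_1+\cdots+\alpha_m=n}\binom{\alpha_1}{k_1}\cdots\binom{\alpha_m}{k_m}=\binom{n+m-1}{|\bm{k}|+m-1}$, from which polynomiality in $n$ and the leading coefficient $\frac{k_1!\cdots k_m!}{(|\bm{k}|+m-1)!}$ drop out simultaneously, with no induction and no asymptotics. Your induction on $m$ via the two-variable sums $T_{a,b}(n)$ works, and the Riemann-sum/Beta-function evaluation of the leading coefficient is legitimate (once polynomiality is known, an asymptotic determination of the top coefficient is an exact one, since a polynomial agreeing with the sum at infinitely many integers is unique); but, as you note, it forces you to track two boundary effects --- the truncation of the range of $\alpha_m$ from $n-1$ down to $n-m+1$, and the fact that the $(m-1)$-fold polynomial need not vanish at small arguments --- both of which contribute only polynomials of degree at most $k_m$ and so are harmless, though this should be said explicitly. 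Your generating-function alternative is in fact the cleanest of the three and is the analytic twin of the paper's identity: the factor $\sum_{\alpha\ge 1}\alpha^k z^\alpha = zA_k(z)/(1-z)^{k+1}$ with $A_k(1)=k!$ (Eulerian polynomial) plays exactly the role that $k!\binom{\alpha}{k}+\cdots$ plays in the paper, and partial fractions replaces the Vandermonde-type convolution. What the paper's approach buys is brevity and an exact formula for every coefficient in the binomial basis; what yours buys is that it generalises mechanically to other weight sequences whose generating functions are rational with a single pole at $z=1$.
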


One proof of this fact expresses monomials $\alpha^k$ as linear combinations of binomial coefficients $\alpha^k = k! \binom{\alpha}{k} + \cdots$ and uses the combinatorial fact
\[
\sum_{\alpha_1 + \cdots + \alpha_m = n} \binom{\alpha_1}{k_1} \cdots \binom{\alpha_m}{k_m} = \binom{n+m-1}{|\bm{k}|+m-1}.
\]
As a direct consequence of this lemma and Proposition~\ref{pro:qpolynomials}, we have the following result.

\begin{corollary} \label{cor:leading}
For non-negative integers $i$ and $j$, $P_i$ is a polynomial of degree $2i+3$ and $P_{i,j}$ is a polynomial of degree $2i+2j+5$. Their leading coefficients are given by the formulae
\[
\left[ x^{2a+1} y^{2b} \right] P_{a+b-1}(x,y) = \frac{(2a+2b-1)!!}{(2a+1)! \, (2b)!} \qquad \text{and} \qquad \left[ x^{2a+2b+5} \right] P_{a,b}(x) = \frac{(2a+1)!! (2b+1)!!}{(2a+2b+5)!}.
\]
\end{corollary}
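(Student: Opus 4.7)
The plan is to combine Proposition~\ref{pro:qpolynomials} (which gives $q_d(\alpha) = \frac{1}{(2d)!!}\alpha^{2d} + (\text{lower order})$) with the displayed lemma applied to the inner sum over compositions. Since only the top-degree terms of the $q$'s contribute to the top-degree term of $P_i$ or $P_{i,j}$, I can replace $q_i(\alpha)$ by $\frac{\alpha^{2i}}{(2i)!!}$ throughout, provided I then justify that lower order terms in $q$ produce polynomials of strictly smaller degree in $x$ (or $x,y$), which follows again from the lemma.

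First I would handle $P_i(x,y)$. After substituting the leading term of $q_i$, the expression becomes $\frac{1}{(2i)!!} \sum_{\alpha+\beta=x+y+1} \alpha^{2i+1}\beta$. By the lemma with $m=2$, $k_1=2i+1$, $k_2=1$, and $n=x+y+1$, this is a polynomial in $n$ of degree $2i+3$ with leading coefficient $\frac{(2i+1)!}{(2i+3)!}$. Hence $P_i(x,y)$ has total degree $2i+3$ and its top-degree homogeneous part is
\[
\frac{(2i+1)!}{(2i)!!\,(2i+3)!}\,(x+y)^{2i+3}.
\]
Setting $i=a+b-1$, so that $2a+1+2b=2i+3$, the binomial expansion gives
\[
[x^{2a+1}y^{2b}]\,P_{a+b-1}(x,y) = \frac{(2i+1)!}{(2i)!!\,(2i+3)!}\binom{2i+3}{2a+1}=\frac{(2a+2b-1)!}{(2a+2b-2)!!\,(2a+1)!\,(2b)!},
\]
and the identity $(2k-1)! = (2k-1)!!\,(2k-2)!!$ with $k=a+b$ converts this into the claimed $\dfrac{(2a+2b-1)!!}{(2a+1)!\,(2b)!}$.

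Next I would do $P_{i,j}(x)$ in exactly the same style. Replacing both $q_i$ and $q_j$ by their leading monomials, we get $\frac{1}{(2i)!!(2j)!!}\sum_{\alpha+\beta+\gamma=x+1}\alpha^{2i+1}\beta^{2j+1}\gamma$. The lemma with $m=3$, $k_1=2i+1$, $k_2=2j+1$, $k_3=1$, and $n=x+1$ yields a polynomial in $n$ of degree $2i+2j+5$ whose leading coefficient in $n$ is $\frac{(2i+1)!(2j+1)!}{(2i+2j+5)!}$. Since $n=x+1$, the coefficient of $x^{2i+2j+5}$ equals this leading coefficient, so
\[
[x^{2a+2b+5}]\,P_{a,b}(x)=\frac{(2a+1)!(2b+1)!}{(2a)!!\,(2b)!!\,(2a+2b+5)!}=\frac{(2a+1)!!\,(2b+1)!!}{(2a+2b+5)!},
\]
using $(2k+1)! = (2k+1)!!\,(2k)!!$ twice.

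The only part that requires any care is confirming that the lower-order contributions from $q_i$ (and $q_j$) indeed cannot raise the total degree; the lemma controls exactly this, since if one replaces $\alpha^{2i}$ by $\alpha^r$ with $r<2i$, the resulting sum has degree $r+k_{\text{rest}}+m-1$, which is strictly less than the degree computed above. Apart from this bookkeeping, the argument is a direct computation and I do not expect a substantive obstacle; the double-factorial identities are the only place where one might slip.
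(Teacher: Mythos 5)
Your computation is correct and is exactly the "direct consequence" argument the paper intends: substitute the leading term $\alpha^{2d}/(2d)!!$ of $q_d$ from Proposition~\ref{pro:qpolynomials}, apply the displayed lemma on power sums over compositions, and convert factorials to double factorials via $(2k+1)!=(2k+1)!!\,(2k)!!$. The paper gives no further detail for this corollary, so your write-up simply supplies the bookkeeping it leaves implicit.
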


\begin{example}
The summations over $\alpha, \beta, \gamma$ in the cut-and-join recursion for pruned simple Hurwitz numbers can be replaced by expressions involving $P_i$ and $P_{i,j}$. For example, consider the case $(g,n) = (1,2)$.
\begin{align*}
& (\mu_1 + \mu_2 + 2) \, \widehat{K}_{1,2}(\mu_1, \mu_2) \\
=& \sum_{\alpha + \beta = \mu_1 + \mu_2 + 1} \alpha \beta \, \widehat{K}_{1,1}(\alpha) + \frac{1}{2} \sum_{\alpha + \beta + \gamma = \mu_1 + 1} \alpha \beta \gamma \, \widehat{K}_{0,3}(\mu_2, \alpha, \beta) + \frac{1}{2} \sum_{\alpha + \beta + \gamma = \mu_2 + 1} \alpha \beta \gamma \, \widehat{K}_{0,3}(\mu_1, \alpha, \beta) \\
=& \sum_{\alpha + \beta = \mu_1 + \mu_2 + 1} \alpha \beta \, \frac{q_1(\alpha) - q_0(\alpha)}{24} + \frac{1}{2} \sum_{\alpha + \beta + \gamma = \mu_1 + 1} \alpha \beta \gamma \, q_0(\mu_2) q_0(\alpha) q_0(\beta) + \frac{1}{2} \sum_{\alpha + \beta + \gamma = \mu_2 + 1} \alpha \beta \gamma \, q_0(\mu_1) q_0(\alpha) q_0(\beta) \\
=& \frac{1}{24} P_1(\mu_1, \mu_2) - \frac{1}{24} P_0(\mu_1, \mu_2) + \frac{1}{2} P_{0,0}(\mu_1) \, q_0(\mu_2) + \frac{1}{2} P_{0,0}(\mu_2) \, q_0(\mu_1)
\end{align*}
\end{example}

We are now in a position to deduce the Witten--Kontsevich theorem from equation~(\ref{eq:prunedelsv}) and Proposition~~\ref{eq:prunedelsv}, the cut-and-join recursion for pruned simple Hurwitz numbers.

\begin{theorem}[Witten--Kontsevich theorem] \label{thm:witten}
The intersection numbers of psi-classes on the Deligne--Mumford moduli spaces of curves $\overline{\mathcal M}_{g,n}$ satisfy the following equation for all $d_1, d_2, \ldots, d_n$.
\begin{align*}
\langle \tau_{d_1} \cdots \tau_{d_n} \rangle &= \sum_{j=2}^n \frac{(2d_1+2d_j-1)!!}{(2d_1+1)!! \, (2d_j-1)!!} \, \langle \tau_{\bm{d}_{S \setminus \{1, j\}}} \tau_{d_1+d_j-1} \rangle \\
&+ \frac{1}{2} \sum_{i+j=d_1-2} \frac{(2i+1)!! \, (2j+1)!!}{(2d_1+1)!!} \left[ \langle \tau_i \tau_j \tau_{\bm{d}_{S \setminus \{1\}}} \rangle + \sum_{I \sqcup J = S \setminus \{1\}} \langle \tau_i \tau_{\bm{d}_I} \rangle \, \langle \tau_j \tau_{\bm{d}_J} \rangle \right]
\end{align*}
\end{theorem}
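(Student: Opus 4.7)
The strategy is to substitute the explicit formula~(\ref{eq:prunedelsv}) expressing $\widehat K_{g,n}$ as a polynomial whose coefficients are Hodge integrals into the cut-and-join recursion of Proposition~\ref{th:cutandjoin}, and then to extract top-degree coefficients from the resulting polynomial identity. First, I would identify pure psi-class intersections with the leading behaviour of $\widehat K_{g,n}$. Since $\widehat K_{g,n}(\bm\mu)$ has total degree $6g-6+3n$ by Proposition~\ref{th:polynomial} while the summand of~(\ref{eq:prunedelsv}) indexed by $(\bm d,\ell)$ has total degree $n + 2|\bm d| = 6g-6+3n-2\ell$, only the $\ell=0$ contributions survive at top order. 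Combined with the leading coefficient $1/(2d)!!$ of $q_d$ from Proposition~\ref{pro:qpolynomials}, the top homogeneous component is
\[
\widehat K_{g,n}^{\mathrm{top}}(\bm\mu) = \sum_{|\bm d|=3g-3+n} \langle \tau_{d_1}\cdots \tau_{d_n}\rangle_g \prod_{i=1}^n \frac{\mu_i^{2d_i+1}}{(2d_i)!!}.
\]

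Next, I would rewrite the cut-and-join recursion using the polynomials $P_i$ and $P_{i,j}$ defined just before Corollary~\ref{cor:leading}, exactly as in the $(g,n)=(1,2)$ example that precedes the theorem. The resulting polynomial identity has total degree $6g-5+3n$, and extracting the top-degree component now uses the explicit leading coefficients of $P_i$ and $P_{i,j}$ from Corollary~\ref{cor:leading}. I would then read off the coefficient of $\mu_1^{2d_1+2}\prod_{i\geq 2}\mu_i^{2d_i+1}$ on both sides. Because every monomial of $\widehat K_{g,n}^{\mathrm{top}}$ carries an odd power of each $\mu_k$, the left-hand side $(2g-2+n+|\bm\mu|)\widehat K_{g,n}^{\mathrm{top}}$ can produce this particular monomial only via the $\mu_1\cdot\widehat K_{g,n}^{\mathrm{top}}$ summand, giving the coefficient $\langle\tau_{d_1}\cdots\tau_{d_n}\rangle_g/\prod_i (2d_i)!!$. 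By the same parity argument, on the right-hand side the join sum $\sum_{i<j}\mu_i\mu_j\cdots$ contributes only when $i=1$, producing the first sum in the Witten--Kontsevich statement, and the cut contributions survive only when the privileged index equals $1$, producing the second sum. Converting via $(2d+1)!=(2d+1)!!\,(2d)!!$ and $(2d)!=(2d)!!\,(2d-1)!!$ then rearranges the factorials into the precise Witten--Kontsevich coefficients.

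The principal obstacle is the combinatorial bookkeeping: one must verify that the double factorials in the leading coefficients of $q_d$, $P_i$, and $P_{i,j}$ combine with the factors of $\tfrac{1}{2}$ and $(m-1)!$ implicit in the cut-and-join to yield exactly the ratios $(2d_1+2d_j-1)!!/((2d_1+1)!!(2d_j-1)!!)$ and $(2d_\alpha+1)!!(2d_\beta+1)!!/(2d_1+1)!!$ appearing in the theorem. One should also check that the \emph{stable} exclusion of $\widehat K_{0,1}$ and $\widehat K_{0,2}$ in the cut-and-join matches the vanishing of intersection numbers on the nonexistent moduli spaces $\overline{\mathcal M}_{0,1}$ and $\overline{\mathcal M}_{0,2}$, so that no spurious terms appear on either side of the resulting identity.
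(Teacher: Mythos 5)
Your proposal is correct and follows essentially the same route as the paper: substitute the pruned ELSV-type formula~(\ref{eq:prunedelsv}) into the cut-and-join recursion of Proposition~\ref{th:cutandjoin}, extract the top-degree coefficient of the distinguished monomial (your $\mu_1^{2d_1+2}\prod_{i\geq 2}\mu_i^{2d_i+1}$ is the paper's $\mu_1\bm{\mu}^{2\bm{d}}$ up to the overall $\prod_i \mu_i$ prefactor), use parity/degree counting to kill the Hodge-class terms and the off-index contributions, and convert the leading coefficients of $q_d$, $P_i$, $P_{i,j}$ from Proposition~\ref{pro:qpolynomials} and Corollary~\ref{cor:leading} into the double-factorial coefficients. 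The remaining work you flag is exactly the bookkeeping the paper carries out.
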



\begin{remark}
In actual fact, the original formulation of Witten posited that a certain natural generating function for intersection numbers of psi-classes --- the Gromov--Witten potential of a point --- is a solution to the KdV integrable hierarchy~\cite{WitTwo}. This is equivalent to the fact that the generating function is annihilated by the Virasoro differential operators $L_{-1}, L_0, L_1, \ldots$, which satisfy the Virasoro relation $[L_m, L_n] = (m-n) L_{m+n}$. The annihilation by $L_{-1}$ and $L_0$ is equivalent to the dilaton and string equations, which have straightforward geometric interpretations that already appear in the original paper of Witten. It is straightforward to prove that Theorem~\ref{thm:witten} is equivalent to the fact that $L_{d_1-1}$ annihilates the Gromov--Witten potential of a point.
\end{remark}

\begin{proof}[Proof of Theorem~\ref{thm:witten}]
Take the cut-and-join recursion for pruned simple Hurwitz numbers and consider the coefficient of $\mu_1 \bm{\mu}^{2\bm{d}} = \mu_1^{2d_1+1} \mu_2^{2d_2} \cdots \mu_n^{2d_n}$ for $|\bm{d}| = 3g-3+n$. This condition ensures that no terms involving non-trivial Hodge classes appear.

The desired coefficient of the left hand side of the cut-and-join recursion can be expressed as follows.
\begin{align*}
& \left[ \mu_1 \bm{\mu}^{2\bm{d}} \right] (2g-2+n+|\bm{\mu}|) \, \widehat{K}_{g,n}(\bm{\mu}_S) \\
=& \left[ \mu_1 \bm{\mu}^{2\bm{d}} \right] (2g-2+n+|\bm{\mu}|) \sum_{|\mathbf{k}|+\ell = 3g-3+n} (-1)^\ell \langle \tau_{k_1} \cdots \tau_{k_n} \lambda_\ell \rangle_g \, \prod_{i=1}^n q_{k_i}(\mu_i) \\
=& \langle \tau_{d_1} \cdots \tau_{d_n} \rangle_g \, \prod_{i=1}^n a_{d_i} \\
\end{align*}
The first equality uses equation~(\ref{eq:prunedelsv}) while the second makes use of the fact that $q_d$ is a polynomial of degree $2d$ with leading coefficient $a_d = \frac{1}{(2d)!!}$, as stated in Proposition~\ref{pro:qpolynomials}.

The desired coefficient of the first term on the right hand side of the cut-and-join recursion can be expressed as follows.
\begin{align*}
& \left[ \mu_1 \bm{\mu}^{2\bm{d}} \right] \sum_{i < j} \sum_{\alpha + \beta = \mu_i + \mu_j + 1} \alpha \beta \, \widehat{K}_{g,n-1}(\bm{\mu}_{S \setminus \{i,j\}}, \alpha) \\
=& \left[ \mu_1 \bm{\mu}^{2\bm{d}} \right] \sum_{i < j} \sum_{|\bm{k}_{S \setminus\{i,j\}}|+s+\ell=3g-4+n} (-1)^\ell \langle \tau_{\bm{k}_{S \setminus \{i,j\}}} \tau_s \lambda_\ell \rangle_g \, P_s(\mu_i, \mu_j) \prod_{m \in S \setminus \{i,j\}} q_{k_m}(\mu_m) \\
=& \sum_{j=2}^n \langle \tau_{\bm{d}_{S \setminus \{i,j\}}} \tau_{d_1+d_j-1} \rangle_g \left[ \mu_1^{2d_1+1} \mu_j^{2d_j} \right] P_{d_1+d_j-1}(\mu_1, \mu_j) \prod_{m \in S \setminus \{1,j\}} a_{d_m} \\
=& \sum_{j=2}^n \langle \tau_{\bm{d}_{S \setminus \{i,j\}}} \tau_{d_1+d_j-1} \rangle_g \, \frac{(2d_1+2d_j-1)!!}{(2d_1+1)! (2d_j)!} \prod_{m \in S \setminus \{1,j\}} a_{d_m}
\end{align*}
The first equality uses equation~(\ref{eq:prunedelsv}), the second takes into account the fact that $|\bm{d}| = 3g-3+n$, while the third follows from Corollary~\ref{cor:leading}.

In an analogous fashion, the desired coefficients of the second and third terms on the right hand side of the cut-and-join recursion can be expressed as follows.
\[
\frac{1}{2} \sum_{s+t=d_1-2}  \langle \tau_{\bm{d}_{S \setminus \{1\}}} \tau_s \tau_t \rangle_{g-1} \left[ \mu_1^{2d_1+1} \right] P_{s,t}(\mu_1) \prod_{m \in S \setminus \{1\}} a_{d_m}
\]
\[
\frac{1}{2} \mathop{\sum_{g_1+g_2=g}}_{I \sqcup J = S \setminus\{1\}}^{\mathrm{stable}} \sum_{s+t=d_1-2} \langle \tau_{\bm{d}_I} \tau_s \rangle_{g_1} \langle \tau_{\bm{d}_J} \tau_t \rangle_{g_2} \frac{(2s+1)!! (2t+1)!!}{(2d_1+1)!} \prod_{m \in S \setminus \{1\}} a_{d_m}
\]


Now substitute these expressions into the cut-and-join recursion and divide both sides by $a_{d_1} a_{d_2} \cdots a_{d_n}$ to obtain the desired result.
\end{proof}

It is worth remarking that Okounkov and Pandharipande also deduce the Witten--Kontsevich theorem using the ELSV formula as a starting point~\cite{OPaGrom}. Their approach expresses the asymptotics of simple Hurwitz numbers as a sum over trivalent ribbon graphs, thereby obtaining Kontsevich's combinatorial formula~\cite{KonInt}. The Witten--Kontsevich theorem is then derived as a consequence of this formula using the theory of matrix models. In contrast, the notion of pruning reduces the enumeration of simple Hurwitz numbers to an equivalent problem that is inherently polynomial. The asymptotic analysis of pruned simple Hurwitz numbers is then stored in the top degree terms of the cut-and-join recursion. As shown in the proof of Theorem~\ref{thm:witten} above, the Witten--Kontsevich theorem emerges directly from this analysis without necessitating the use of a matrix model.


There are now myriad proofs of the Witten--Kontsevich theorem, most of which involve the theory of matrix models in one way or another. Exceptional in this respect is the proof by Mirzakhani, who analyses the volume $V_{g,n}(L_1, L_2, \ldots, L_n)$ of the moduli space of genus $g$ hyperbolic surfaces with $n$ geodesic boundary components of lengths $L_1, L_2, \ldots, L_n$~\cite{MirWei}. Her proof consists of two parts --- a theorem that relates $V_{g,n}(L_1, L_2, \ldots, L_n)$ to the intersection theory of moduli spaces of curves and a recursion that can be used to compute $V_{g,n}(L_1, L_2, \ldots, L_n)$. It is natural to consider these as analogous to the ELSV formula and the cut-and-join recursion, respectively. Our proof of the Witten--Kontsevich theorem bears strong resemblance to that of Mirzakhani, but uses a combinatorial argument rather than hyperbolic geometry to obtain the recursion.

We finish the section with a table of the polynomials $\widehat{K}_{g,n}(\mu_1, \mu_2, \ldots, \mu_n)$ which give pruned simple Hurwitz numbers.
\begin{center}
\begin{tabular}{ccl} \toprule
$g$ & $n$ & $\widehat{K}_{g,n}(\mu_1, \mu_2, \ldots, \mu_n)$ \\ \midrule
0 & 3 & 1 \\
0 & 4 & $\frac{1}{2} \sum \mu_i^2 + \frac{1}{2} \sum \mu_i$ \\
0 & 5 & $\frac{1}{8} \sum \mu_i^4 + \frac{1}{2}  \sum \mu_i^2 \mu_j^2 + \frac{5}{12} \sum \mu_i^3 + \frac{1}{2} \sum \mu_i^2 \mu_j + \frac{3}{8} \sum \mu_i^2 + \frac{1}{2} \sum \mu_i \mu_j + \frac{1}{12} \sum \mu_i$ \\
1 & 1 & $\frac{1}{48} \mu_1^2 + \frac{1}{48} \mu_1 - \frac{1}{24}$ \\
1 & 2 & $\frac{1}{192} (\mu_1^4 + \mu_2^4) + \frac{1}{96} \mu_1^2 \mu_2^2 + \frac{5}{288} (\mu_1^3 + \mu_2^3) + \frac{1}{96} (\mu_1^2 \mu_2 + \mu_1 \mu_2^2) - \frac{1}{192} (\mu_1^2 + \mu_2^2) + \frac{1}{96} \mu_1 \mu_2 - \frac{5}{288} (\mu_1 + \mu_2)$ \\
2 & 1 & $\frac{1}{442368} \mu_1^8 + \frac{1}{36864} \mu_1^7 + \frac{271}{3317760} \mu_1^6 - \frac{7}{276480} \mu_1^5 - \frac{1873}{6635520} \mu_1^4 - \frac{53}{552960} \mu_1^3 + \frac{329}{1658880} \mu_1^2 + \frac{13}{138240}\mu_1$ \\ \bottomrule
\end{tabular}
\end{center}

\section{Pruned orbifold Hurwitz numbers}  \label{sec:pruorb}

\subsection{Orbifold Hurwitz numbers}

In this section, we generalise the results for simple Hurwitz numbers in the previous section to the case of orbifold Hurwitz numbers.

\begin{definition}
For a fixed positive integer $a$, the {\em orbifold Hurwitz number} $H_{g,n}^{[a]}(\mu_1, \mu_2, \ldots, \mu_n)$ is the weighted enumeration of connected genus $g$ branched covers $f: (\Sigma; p_1, p_2, \ldots, p_n) \to (\mathbb{CP}^1; \infty)$ such that
\begin{itemize}
\item the preimage of $\infty$ is given by the divisor $\mu_1 p_1 + \mu_2 p_2 + \cdots + \mu_n p_n$;
\item the ramification profile over 0 is given by a partition of the form $(a, a, \ldots, a)$; and
\item the only other ramification is simple and occurs over $m$ fixed points.
\end{itemize}
\end{definition}

Note that we recover the definition of simple Hurwitz numbers in the case $a = 1$. Justification for the terminology {\em orbifold Hurwitz number} stems from the following generalisation of the ELSV formula due to Johnson, Pandharipande, and Tseng.

\begin{theorem}[Orbifold ELSV formula~\cite{JPTAbe}]
\[
H_{g,n}^{[a]}(\mu_1, \mu_2, \ldots, \mu_n) = \prod_{i=1}^n\mu_i\cdot\left( 2g-2+n+\frac{|\mu|}{a} \right)! a^{1-g+\sum \{\mu_i / a\}} \prod_{i=1}^n \frac{\mu_i^{\lfloor \mu_i / a \rfloor}}{\lfloor \mu_i / a \rfloor !} \int_{\overline{\mathcal M}_{g, [-\mu]}({\mathcal B}\mathbb{Z}_a)} \frac{\sum_{i=0}^\infty (-a)^i \lambda_i^U}{\prod_{i=1}^n (1-\mu_i \overline{\psi}_i)}
\]
\end{theorem}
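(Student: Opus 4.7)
The plan is to adapt the virtual localization proof of the classical ELSV formula to the orbifold setting. First I would realize $H^{[a]}_{g,n}(\mu_1,\ldots,\mu_n)$ as a weighted degree of the branch morphism from the moduli space of relative stable maps $\overline{\mathcal M}_{g,n}(\mathbb{P}^1; \mu, (a^{|\mu|/a}))$ to $\mathrm{Sym}^m \mathbb{P}^1$, where the relative conditions impose ramification $\mu$ over $\infty$ and ramification of shape $(a,a,\ldots,a)$ over $0$. This is the standard translation of the Hurwitz counting problem into intersection theory and makes the orbifold Hurwitz number accessible to equivariant methods.

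Next I would invoke the $\mathbb{C}^*$-action on $\mathbb{P}^1$ fixing $0$ and $\infty$ and lift it to the relative moduli space. Virtual localization (Graber--Pandharipande) then expresses the relevant integral as a sum over connected components of the fixed locus. Each fixed map has source curve decomposing as $\Sigma_\infty \cup R$, where $\Sigma_\infty$ is a (possibly nodal) curve contracted to $\infty$ carrying the marked points $p_i$ of ramification $\mu_i$, and $R$ is a disjoint union of rational ``chimneys'' mapping as orbifold covers to the axis $\{0,\infty\}$. The dominant fixed locus has moduli isomorphic to $\overline{\mathcal M}_{g,[-\mu]}(\mathcal{B}\mathbb{Z}_a)$ because the twisted stable map structure at $0$ forces each chimney node to carry a $\mathbb{Z}_a$-gerbe, which patches into a twisted curve at $\infty$.

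The factor $\sum_i(-a)^i \lambda_i^U$ then emerges from the virtual normal bundle through the equivariant Euler class of the dual Hodge bundle, decomposed by characters of $\mathbb{Z}_a$, with the character weight $a$ encoding the monodromy at the gerbe nodes. The product $\prod_i (1-\mu_i\overline{\psi}_i)^{-1}$ comes from smoothing of the chimney nodes, giving a geometric series in the equivariant parameter and the orbifold psi-class $\overline{\psi}_i$. The prefactor $a^{1-g+\sum\{\mu_i/a\}}$ accounts for the gerbe automorphisms together with the age-shifting contribution from Chen--Ruan cohomology, while $\prod \mu_i^{\lfloor \mu_i/a\rfloor}/\lfloor \mu_i/a \rfloor !$ records the automorphisms of the degree-$\mu_i$ chimneys.

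The hardest step will be the equivariant Riemann--Roch computation on the twisted source curves: one must carefully character-decompose the Hodge bundle under the natural $\mathbb{Z}_a$-action induced by the orbifold structure, and track how the automorphisms of the $\mathbb{Z}_a$-gerbes combine with the chimney contributions to produce the exact combinatorial prefactor. A secondary technical obstacle is constructing the perfect obstruction theory on $\overline{\mathcal M}_{g,[-\mu]}(\mathcal{B}\mathbb{Z}_a)$ and comparing it compatibly with the relative obstruction theory on the big moduli space, so that the virtual fundamental classes align and the localization output matches the claimed formula term by term.
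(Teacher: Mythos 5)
This statement is not proved in the paper at all: it is imported verbatim from Johnson--Pandharipande--Tseng \cite{JPTAbe} and used as a black box, so there is no in-paper argument to compare against. Judged on its own terms, your proposal correctly identifies the general strategy of the cited proof (virtual localization with respect to the $\mathbb{C}^*$-action on the target, a distinguished fixed locus over $\infty$ carrying Hodge-type classes, and chimney/node-smoothing contributions producing the $\prod_i(1-\mu_i\overline{\psi}_i)^{-1}$ factor), but it is a proof \emph{plan} rather than a proof. Every quantity that actually has to be computed --- the equivariant Euler class of the virtual normal bundle, the character decomposition of the Hodge bundle, the gerbe automorphism factors, and the exact combinatorial prefactor $a^{1-g+\sum\{\mu_i/a\}}\prod_i \mu_i^{\lfloor\mu_i/a\rfloor}/\lfloor\mu_i/a\rfloor!$ --- is asserted to ``emerge'' or ``account for'' something without being derived, and you explicitly defer the hardest steps. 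As written, nothing in the proposal verifies that the localization output equals the claimed right-hand side.

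There is also an internal inconsistency in the setup. You take the moduli space of relative stable maps to ordinary $\mathbb{P}^1$ with a relative condition of profile $(a,a,\ldots,a)$ over $0$, and then claim the fixed locus over $\infty$ is $\overline{\mathcal M}_{g,[-\mu]}(\mathcal{B}\mathbb{Z}_a)$ because ``the twisted stable map structure at $0$ forces each chimney node to carry a $\mathbb{Z}_a$-gerbe.'' But with a schematic target there is no twisted stable map structure anywhere: localizing that space gives a double-Hurwitz-type sum whose $\infty$-side fixed loci are ordinary $\overline{\mathcal M}_{g',n'}$'s, and the classes $\lambda_i^U$ and $\overline{\psi}_i$ on $\overline{\mathcal M}_{g,[-\mu]}(\mathcal{B}\mathbb{Z}_a)$ never appear. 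To make the stated fixed locus arise you must replace the target by the root stack $\mathbb{P}^1[a]$ with a $\mathbb{Z}_a$-orbifold point at $0$ (as JPT do), so that the source curves are genuinely twisted and the evaluation/contraction at $\infty$ lands in stable maps to $\mathcal{B}\mathbb{Z}_a$. Without that change the decomposition you describe does not exist, so the argument as set up would fail before the computational gaps are even reached.
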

where $\overline{\mathcal{M}}_{g,\gamma}(\mathcal{B}\mathbb{Z}_a)$ is the moduli space of stable maps to $\mathcal{B}\mathbb{Z}_a$, the classifying stack of $\mathbb{Z}_a$ given by a point with trivial $\mathbb{Z}_a$ action, and $\lambda_i^U$ are generalisations of the Hodge class.

\begin{theorem} [\cite{BSLMMir,DLNOrb}]
For a fixed positive integer $a$, consider the rational spectral curve $C$ given by
\[
x(z) = z \exp(-z^a) \qquad \text{and} \qquad y(z) = z^a.
\]
The analytic expansion of the Eynard--Orantin invariant $\omega_{g,n}$ of $C$ around $x_1 = x_2 = \cdots = x_n = 0$ is given by
\[
\omega_{g,n} = \sum_{\mu_1, \ldots, \mu_n = 1}^\infty \frac{H_{g,n}^{[a]}(\mu_1, \ldots, \mu_n)}{(2g-2+n+\frac{|\mu|}{a})!} \prod_{i=1}^n x_i^{\mu_i-1} \dd x_i.
\]
\end{theorem}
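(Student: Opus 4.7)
The plan is to adapt the Bouchard--Mari\~{n}o proof strategy used for Theorem~\ref{th:bouchardmarino} to the orbifold setting. The argument proceeds in three phases: derive a cut-and-join recursion for $H^{[a]}_{g,n}$, perform a Laplace transform under the change of variables $x = z\exp(-z^a)$, and match the resulting recursion with the Eynard--Orantin recursion on the spectral curve.

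First I would establish a cut-and-join recursion for $H^{[a]}_{g,n}(\mu_1,\ldots,\mu_n)$ by removing the last transposition $\sigma_m$ from the factorisation \eqref{factora}. The fixed factor $\sigma_0$ of shape $(a,a,\ldots,a)$ is carried through the recursion unchanged, so the cut and join operations act on $\sigma_1,\ldots,\sigma_m$ exactly as in the simple case, modulo the compatibility with $\sigma_0$: the divisibility condition $a \mid |\mu|$ is preserved, and the index set $\{1,\ldots,|\mu|\}$ splits according to which of the $|\mu|/a$ cycles of $\sigma_0$ each element belongs to. The Riemann--Hurwitz count $m=2g-2+n+|\mu|/a$ then guarantees that the recursion closes.

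Next I would locate the ramification points of $x(z) = z\exp(-z^a)$. Since $\dd x = (1 - az^a)\exp(-z^a)\,\dd z$, the critical points are the $a$ distinct $a$-th roots of $1/a$. Near each critical point there is a well-defined local involution $z \mapsto \bar z$ with $x(\bar z) = x(z)$, and together with the $1$-form $y\,\dd x = z^a(1-az^a)\exp(-z^a)\,\dd z$ these data determine the Eynard--Orantin kernel. The base cases, $\omega_{0,1} = y\,\dd x$ and $\omega_{0,2}$ the standard Bergman kernel on the rational curve, would be matched directly against the generating functions for $H^{[a]}_{0,1}$ and $H^{[a]}_{0,2}$ computed by hand.

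The heart of the proof is to show that, after the substitution $x_i = z_i\exp(-z_i^a)$, the cut-and-join recursion for $H^{[a]}_{g,n}$ coincides term-by-term with the topological recursion. Lagrange inversion is the key tool here: infinite combinatorial series of the shape $\sum_{\mu \geq 1} \mu^{\lfloor \mu/a\rfloor+k} z^{\mu-1}/\lfloor \mu/a\rfloor!$ sum to rational functions of $z$ with poles only at the $a$ ramification points, which is precisely what is needed for the generating series to extend to meromorphic multidifferentials on $C^n$. The join terms in the cut-and-join recursion then correspond to the diagonal branch of the Eynard--Orantin kernel, and the cut terms to the off-diagonal branch.

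The main obstacle is the residue analysis at the $a$ distinct ramification points. In the $a=1$ case of Bouchard--Mari\~{n}o, a single ramification point suffices; for $a>1$ one must exploit the rotational symmetry $z \mapsto \zeta z$ (with $\zeta$ a primitive $a$-th root of unity) to organise the sum of residues over the critical locus and see that the pieces reassemble into the combinatorial recursion. An alternative route, taken in \cite{DLNOrb}, starts from the orbifold ELSV formula and expresses both sides as formal generating series in $\psi$-classes and the generalised Hodge classes $\lambda_i^U$, reducing the theorem to a clean local Laplace identity that can be verified directly; this route avoids the global residue bookkeeping at the cost of heavier moduli-of-orbifold-maps machinery.
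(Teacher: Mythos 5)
The paper does not prove this theorem: it is quoted from \cite{BSLMMir,DLNOrb}, so there is no internal proof to compare yours against. Your sketch does faithfully reproduce the strategy of those references (and of \cite{EMSLap} for $a=1$): establish a cut-and-join recursion for $H^{[a]}_{g,n}$, take its Laplace transform under $x=z\exp(-z^a)$, and identify the result with the Eynard--Orantin recursion, with the orbifold ELSV formula of \cite{JPTAbe} supplying the quasi-polynomial structure needed for the series to extend to rational multidifferentials. Your computation of the ramification locus --- $\dd x=(1-az^a)\exp(-z^a)\,\dd z$ vanishing at the $a$-th roots of $1/a$ --- is correct for the curve as stated, and is in fact more careful than the paper's own later assertion (in the quasi-polynomiality proposition) that the zeros of $\dd x$ lie at $z^a=1$.

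That said, as a proof the proposal is a roadmap rather than an argument. The step you yourself call ``the heart of the proof'' --- that after the substitution the cut-and-join recursion coincides term-by-term with the topological recursion --- is exactly where all the work lies, and it is asserted rather than carried out: one must compute the Laplace transforms of series such as $\sum_{\mu\geq 1}\mu^{\lfloor\mu/a\rfloor+k}z^{\mu}/\lfloor\mu/a\rfloor!$ in closed form, control the principal-part extraction at each of the $a$ ramification points (where the claimed $\zeta$-symmetry only gives an equivariance $x(\zeta z)=\zeta x(z)$, not an invariance, so the residues at the $a$ points are not literally identified), and verify the unstable cases $\omega_{0,1}$ and $\omega_{0,2}$ against $H^{[a]}_{0,1}$ and $H^{[a]}_{0,2}$ by hand. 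None of this is routine --- it occupies the bulk of \cite{BSLMMir,DLNOrb} --- so the proposal should be read as a correct identification of the known proof architecture rather than a self-contained proof.
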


\begin{definition}
For a fixed positive integer $a$, we define an $a$-fold {\em branching graph of type $(g; \mu)$} to be an edge-labeled fatgraph of type $(g, \ell(\mu))$ such that
\begin{itemize}
\item there are $\frac{|\mu|}{a}$ vertices and at each of them there are $am$ adjacent half-edges that are cyclically labeled
\[
1, 2, 3, \ldots, m, 1, 2, 3, \ldots, m, \ldots, 1, 2, 3, \ldots, m;
\]
\item there are exactly $m$ (full) edges that are labeled $1, 2, 3, \ldots, m$; and
\item  the $n$ faces are labeled and have perimeters given by $(\mu_1m, \mu_2m, \ldots, \mu_nm)$;
\item each face has a marked $m$-label (of the possible $\mu_k$ appearances of $m$.)
\end{itemize}
\end{definition}
Here, we take $m = 2g - 2 + \ell(\mu) + \frac{|\mu|}{a}$ due to the Riemann--Hurwitz formula.

\begin{proposition} \cite{DLNOrb}
The orbifold Hurwitz number $H_{g,n}^{[a]}(\mu_1, \mu_2, \ldots, \mu_n)$ is equal to the number of $a$-fold branching graphs of type $(g; \mu)$. 
\end{proposition}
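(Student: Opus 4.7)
The plan is to mirror the proof of the analogous result for simple Hurwitz numbers (the $a=1$ case established earlier), adapting the construction of the branching graph to account for the non-trivial ramification profile $(a,a,\ldots,a)$ over $0$. The correspondence between covers and fatgraphs is controlled by the Riemann existence theorem, so the main task is to exhibit a weight-preserving bijection between the two sides.

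First, given an orbifold Hurwitz cover $f:\Sigma\to S^2$ with the prescribed ramification data, I pull back the star graph $\Gamma_m\subset\bc$ on the $m$th roots of unity to obtain $f^{-1}(\Gamma_m)\subset\Sigma$. Because $f$ is unramified outside $\{z:z^m=1\}\cup\{0,\infty\}$, this pullback is a fatgraph embedded in $\Sigma$ whose vertices are the preimages of $0$, whose edges and half-edges come from preimages of radii, and whose faces retract to preimages of $\infty$. I then verify the four bulleted axioms of an $a$-fold branching graph in order: the $|\mu|/a$ vertices arise because $f^{-1}(0)$ has cardinality $|\mu|/a$, and at each vertex the $a$-fold local ramification shows that the $am$ incident half-edges are cyclically labeled $1,2,\ldots,m$ repeated $a$ times. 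Simple ramification over each $m$th root of unity produces exactly $m$ full edges (one for each ramification point, which is the length-$2$ orbit of the monodromy there), labeled by the corresponding roots of unity. Finally, the local form $z\mapsto z^{\mu_i}$ near $p_i$ implies that boundary of the face around $p_i$ crosses each edge label exactly $\mu_i$ times, giving perimeter $\mu_i m$.

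Next, I handle the weighted count. An isomorphism of the unlabeled fatgraph descends from an automorphism of $(\Sigma;p_1,\ldots,p_n)$ commuting with $f$, and conversely any such automorphism preserves $f^{-1}(\Gamma_m)$. The choice of a marked $m$-label on face $i$ is one of $\mu_i$ possibilities, and this choice rigidifies the face (it kills any residual symmetry acting on the $\mu_i$ occurrences of the label $m$ around that face). Consequently, the unweighted count of $a$-fold branching graphs equals $\sum_f \mu_1\cdots\mu_n/|\text{Aut}(f)|$, matching the definition of $H^{[a]}_{g,n}(\mu)$ via the summation over $\ck^{[a]}_{g,n}(\mu)$ used in the obvious extension of \eqref{simhur} to the orbifold setting.

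For the reverse direction I apply Riemann existence: an $a$-fold branching graph encodes the monodromy of a cover via the two standard automorphisms $\tau_0,\tau_1$ on the set $X$ of half-edges. The full edges are the length-$2$ orbits of $\tau_1$ and the half-edges are the fixed points; the cyclic labeling of half-edges at each vertex encodes $\tau_0$, whose orbits all have length $a$; and the face perimeters encode the cycles of $T=(\tau_0\tau_1\cdots)^{-1}$, of shape $\mu$. Transitivity of the monodromy group follows from connectedness of the fatgraph. This reconstructs a cover in $\ch^{[a]}_{g,n}(\mu)$ unique up to isomorphism, with automorphisms matching those of the underlying fatgraph. The main obstacle is keeping the bookkeeping straight: checking that the factor $\mu_1\cdots\mu_n$ produced by the marked $m$-labels exactly cancels the orbit size under the action of the cyclic stabilisers on unmarked branching graphs, in the presence of both the simple ramification over the roots of unity and the non-trivial ramification over $0$. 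Once this cancellation is verified (following the template of the remark after the definition of $H_{g,n}(\mu)$ in Section~\ref{sec:bg}), the proposition follows.
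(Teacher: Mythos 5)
The paper offers no proof of this proposition --- it is simply cited from [DLNOrb] --- but your argument is correct and is exactly the standard one: pull back the star graph $\Gamma_m$ to get the $a$-fold branching graph, check the axioms using the ramification profile $(a,\ldots,a)$ over $0$, and invert via the Riemann existence theorem, with the marked $m$-labels absorbing the factor $\mu_1\cdots\mu_n/|\mathrm{Aut}\,f|$ just as in the $a=1$ case described in Section~\ref{sec:bg}. (One typographical slip: the weighted sum defining $H^{[a]}_{g,n}$ runs over the orbifold analogue of $\ch_{g,n}(\mu)$, not $\ck_{g,n}(\mu)$, which denotes the pruned covers.)
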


\subsection{Pruned orbifold Hurwitz numbers}

One obtains pruned orbifold Hurwitz numbers by restricting the enumeration to the set of pruned orbifold branching graphs, which are obtained by introducing the same simple condition on vertex degrees.

\begin{definition}
We call an orbifold branching graph {\em pruned} if each vertex has essential degree at least two. Let $K_{g,n}^{[a]}(\mu_1, \mu_2, \ldots, \mu_n)$ be the number of pruned $a$-fold branching grahps of type $(g; \mu)$, where $\mu = (\mu_1, \mu_2, \ldots, \mu_n)$. Furthermore, let $m = m(g, \mu) = 2g-2+n+\frac{|\mu|}{a}$ and define the normalisation
\[
\widehat{K}_{g,n}^{[a]}(\mu_1, \mu_2, \ldots, \mu_n) = \frac{K_{g,n}(\mu_1, \mu_2, \ldots, \mu_n)}{m!}.
\]
\end{definition}


\begin{proposition}[Cut-and-join recursion for pruned orbifold Hurwitz numbers]
For $2g-2+n > 0$,
\begin{align*}
m(g,\mu) \, \widehat{K}_{g,n}^{[a]}(\mu_S) &= \sum_{i < j} \mu_i\mu_j \sum_{\alpha + a\beta = \mu_i + \mu_j + a} \beta \widehat{K}_{g,n-1}^{[a]}(\bm{\mu}_{S \setminus \{i,j\}}, \alpha) \\
&+ \frac{1}{2} \sum_{i=1}^n \mu_i\sum_{\alpha + \beta + \gamma = \mu_i + 1}  \gamma \left[ \widehat{K}_{g-1,n+1}^{[a]}(\bm{\mu}_{S \setminus\{i\}}, \alpha, \beta) + \mathop{\sum_{g_1+g_2=g}}_{I \sqcup J = S \setminus\{i\}}^{\mathrm{stable}} \widehat{K}_{g_1, |I|+1}^{[a]}(\mu_I, \alpha) ~ \widehat{K}_{g_2, |J|+1}^{[a]}(\mu_J, \beta) \right]
\end{align*}
\end{proposition}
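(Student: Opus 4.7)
The plan is to follow the template of Proposition~\ref{th:cutandjoin} verbatim in structure, adjusting the combinatorics to account for the $a$-fold cyclic labeling at each vertex (with $am$ half-edges per vertex arranged as $a$ copies of the label sequence $1, 2, \ldots, m$). Given a pruned $a$-fold branching graph of type $(g; \bm{\mu})$, remove the edge labeled $m$ and iteratively remove essential-degree-one vertices together with their incident edges until the remaining graph is pruned. The removed edges form a path, and the configurations split into the same three cases as before: (a) the edge labeled $m$ has face $i$ on both sides and its removal leaves a connected graph; (b) the same face adjacency but removal disconnects the graph; or (c) the edge labeled $m$ separates two distinct faces $i$ and $j$, which subsequently merge.

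The reconstruction count in each case uses the same counting factors as in the simple case: a factor $\mu_i$ (or $\mu_i\mu_j$ in case (c)) for the positions of marked $m$-labels on the affected faces, a factor $\gamma$ (or $\beta$) for the position of the edge labeled $m$ along the added path, and a multinomial factor $\tfrac{(m-1)!}{(m-\gamma)!}$ or $\tfrac{(m-1)!}{m_1!\,m_2!}$ for distributing the remaining labels among the path edges. The cyclic $a$-fold labeling forces a unique valid insertion point for each newly incident half-edge at each vertex, so the vertex-level contribution to the count is unchanged from the simple case. The numerical constraints relating $\alpha,\beta,\gamma$ to the original perimeters now follow from the orbifold Riemann--Hurwitz relation $m = 2g-2+n+|\bm{\mu}|/a$ applied before and after the path removal; in case (c) this yields $\alpha + a\beta = \mu_i + \mu_j + a$, and the corresponding relations in cases (a) and (b) arise analogously from comparing $m$ and $m - \gamma$.

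The main obstacle, as in the simple case, is the subtle double counting that arises in case (c) when the two merging faces are nested (one surrounding the other), so that the edge labeled $m$ could actually be placed anywhere along a path that is internally contained in a single face. This surplus contribution must be identified bijectively with the unstable terms in case (b) involving $\widehat{K}^{[a]}_{0,2}$, so that the two effects cancel when we restrict the second summation to its \emph{stable} terms. Verifying this cancellation in the orbifold setting is the principal piece of new bookkeeping: one must check that the reconstruction of a nested configuration from the pruned graph in case (c) corresponds, under pruning of the candidate cylinder factor, to the reconstruction that would have produced a $\widehat{K}^{[a]}_{0,2}$ term in case (b), and that the multiplicities match. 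Combined with the vanishing $\widehat{K}^{[a]}_{0,1}=0$, which follows because any pruned $(0,1)$-graph would have to be a tree, this justifies the stable restriction. Summing the three case contributions and dividing by $m(g,\bm{\mu})$ then yields the stated recursion.
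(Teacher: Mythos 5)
Your overall strategy is the same as the paper's, which for this proposition simply states that the argument is ``essentially the same'' as the proof of Proposition~\ref{th:cutandjoin}; in that sense you are supplying more detail than the authors do. However, the one step you explicitly defer --- ``the corresponding relations in cases (a) and (b) arise analogously'' --- is exactly where the analogy has to be checked, and checking it does not reproduce the constraint appearing in the statement. In the orbifold setting a cascading removal of $\gamma$ full edges deletes the $\gamma-1$ internal vertices of the removed path, and each vertex of an $a$-fold branching graph has local degree $a$ over $0$, i.e.\ accounts for $a$ units of $|\bm{\mu}|$. Hence in case (a) one gets $|\bm{\mu}| - \mu_i + \alpha + \beta = |\bm{\mu}| - a(\gamma-1)$, that is $\alpha+\beta+a\gamma = \mu_i + a$, and similarly in case (b); this is the exact analogue of the relation $\alpha + a\beta = \mu_i+\mu_j+a$ that you correctly derive in case (c), and it reduces to $\alpha+\beta+\gamma=\mu_i+1$ only when $a=1$. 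So the argument you outline, carried to completion, yields a recursion whose second sum ranges over $\alpha+\beta+a\gamma=\mu_i+a$, not over $\alpha+\beta+\gamma=\mu_i+1$ as in the statement being proved (with the stated constraint most summands would in any case vanish for divisibility reasons, since $K^{[a]}$ requires the total ramification over $\infty$ to be divisible by $a$). You need either to derive the stated constraint --- which the edge-removal geometry does not support --- or to note explicitly that the second summation condition in the statement is carried over verbatim from the $a=1$ case and prove the corrected version.

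A secondary point: the cancellation between the nested-face overcount in case (c) and the unstable $\widehat{K}^{[a]}_{0,2}$ terms in case (b) is the piece of bookkeeping you rightly single out as new, but you only assert that it ``must be checked'' rather than checking it; likewise the claim that the $a$-fold cyclic labeling forces a unique insertion slot for each added half-edge deserves justification, since each vertex now carries $a$ slots per label and one must argue that the choice among them is determined (or correctly counted). These are at the same level of informality as the paper's own one-line proof, so I would not count them as errors on their own, but the constraint mismatch above is a genuine defect in the proposal as written.
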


\begin{proof}
The proof follows from removal of edges from branching graphs and is essentially the same as the proof of Proposition~\ref{th:cutandjoin}.
\end{proof}

\subsection{The pruning correspondence}

\begin{proposition} \label{th:pruningorb}
For $(g,n) \neq (0,1)$,
\[
\widehat{H}_{g,n}(\mu_1, \ldots, \mu_n) = \sum_{\nu_1, \ldots, \nu_n = 1}^{\mu_1, \ldots, \mu_n} \widehat{K}_{g,n}(\nu_1, \ldots, \nu_n) \prod_{i=1}^n \frac{\mu_i^{\frac{\mu_i-\nu_i}{a}}}{(\frac{\mu_i-\nu_i}{a})!}
\]
\end{proposition}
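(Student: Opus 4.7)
My plan is to adapt the proof of Proposition~\ref{th:pruning} to the orbifold setting, using the realisation of $H^{[a]}_{g,n}$ and $K^{[a]}_{g,n}$ as unweighted counts of $a$-fold and pruned $a$-fold branching graphs of type $(g;\bm\mu)$, respectively. (I interpret the $\widehat{H}$ and $\widehat{K}$ in the statement as $\widehat{H}^{[a]}$ and $\widehat{K}^{[a]}$.) First I would clear denominators and recast the identity as
\[
H^{[a]}_{g,n}(\bm\mu) = \sum_{\bm\nu} K^{[a]}_{g,n}(\bm\nu)\cdot \frac{m(g,\bm\mu)!}{m(g,\bm\nu)!\,\prod_{i=1}^n \left(\tfrac{\mu_i-\nu_i}{a}\right)!}\,\prod_{i=1}^n \mu_i^{(\mu_i-\nu_i)/a},
\]
with $m(g,\bm\mu)=2g-2+n+|\bm\mu|/a$ and the sum restricted to $\bm\nu$ with $1\le\nu_i\le\mu_i$ and $a\mid \mu_i-\nu_i$ for each $i$. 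Dividing by $m(g,\bm\mu)!$ recovers the stated normalised formula.

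Next I would define the orbifold pruning operation: iteratively remove vertices of essential degree one together with their incident edges and the half-edges sharing those labels, then relabel so the edge labels form $\{1,\ldots,m\}$. Since an $a$-fold branching graph of type $(g;\bm\mu)$ has exactly $|\bm\mu|/a$ vertices, deleting a leaf from face $i$ decreases the vertex count by one and decreases $\mu_i$ by exactly $a$; in particular, the divisibility $a\mid\mu_i-\nu_i$ is automatic and the number of leaves removed from face $i$ is $(\mu_i-\nu_i)/a$. Reversing pruning reduces the identity to a counting statement: for each pruned $a$-fold branching graph of type $(g;\bm\nu)$, count the $a$-fold branching graphs of type $(g;\bm\mu)$ that prune to it. The multinomial factor above then accounts for the choice of which labels from $\{1,\ldots,m(g,\bm\mu)\}$ serve as edge labels of the pruned subgraph and which are distributed, in packets of size $(\mu_i-\nu_i)/a$, to the new edges of each face.

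What remains is the combinatorial heart: showing that the number of ways to attach $(\mu_i-\nu_i)/a$ new leaves to a pruned face of parameter $\nu_i$ so as to yield a face of parameter $\mu_i$ equals $\mu_i^{(\mu_i-\nu_i)/a}$. I would mimic the rooted-forest encoding in the proof of Proposition~\ref{th:pruning}, decomposing the pruned face boundary into intervals to serve as roots and identifying attached leaves with non-root vertices of a rooted forest. The orbifold modification is that each leaf vertex now has $am$ cyclically arranged half-edges in $a$ blocks of size $m$, so attaching the vertex's unique full edge with a given label offers $a$ possible endpoint slots on the new vertex. Combining this $a$-factor per added leaf with a Cayley-style forest count should give $\mu_i^{(\mu_i-\nu_i)/a}$ via the factorisation
\[
\mu_i^{(\mu_i-\nu_i)/a} = a^{(\mu_i-\nu_i)/a}\,(\mu_i/a)^{(\mu_i-\nu_i)/a},
\]
where the second factor is the orbifold analogue of $\tfrac{\mu_i}{\nu_i}T(\mu_i,\nu_i)=\mu_i^{\mu_i-\nu_i}$ used in the simple case.

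The main obstacle will be pinning down this last bijection cleanly: one must set up the decomposition of the pruned face into root-intervals, identify how each added leaf contributes both a tree shape and an independent slot-choice at its vertex, and verify that the $a$-factors combine with the forest count with no overcounting coming from the cyclic $a$-fold repetition of half-edge labels. Once this combinatorial lemma is in hand, the proposition follows by substitution and normalisation exactly as in the simple case.
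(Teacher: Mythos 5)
Your overall strategy is exactly the paper's: recast the identity as an unweighted count of $a$-fold branching graphs, use the pruning/reconstruction bijection, let the multinomial factor $\frac{m(g,\bm{\mu})!}{m(g,\bm{\nu})!\,\prod_i(\frac{\mu_i-\nu_i}{a})!}$ distribute the edge labels, and reduce everything to a per-face lemma stating that there are $\mu_i^{(\mu_i-\nu_i)/a}$ ways to attach $(\mu_i-\nu_i)/a$ leaves to a pruned face of parameter $\nu_i$. Your bookkeeping up to that point (in particular that each leaf removal drops $\mu_i$ by exactly $a$, so divisibility is automatic) is correct.

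However, the way you propose to prove the per-face lemma contains a genuine error. You factor $\mu^{(\mu-\nu)/a}=a^{(\mu-\nu)/a}\,(\mu/a)^{(\mu-\nu)/a}$, attributing an independent factor of $a$ to every added leaf and hoping the remaining factor $(\mu/a)^{(\mu-\nu)/a}$ is an unweighted Cayley count. It is not, and the arithmetic rules this route out: the relevant forest has $\nu$ roots (the intervals of the pruned face) and $e=\frac{\mu-\nu}{a}$ non-root vertices, so the unweighted analogue of $\frac{\mu}{\nu}T(\mu,\nu)$ would be $(\nu+e)^{e}$, and
\[
a^{e}\,(\nu+e)^{e}=(a\nu+ae)^{e}\neq(\nu+ae)^{e}=\mu^{e}
\]
unless $a=1$. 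The point is that the factor of $a$ is \emph{not} uniform over the added leaves: only edges joining two added vertices carry it, while edges attached directly to the pruned face boundary do not (their attachment slot is already accounted for by the interval structure). The correct key lemma is a generalised Cayley formula for forests counted with weight $a^{\#\text{internal edges}}$, namely $T^{[a]}_{k,e}=k(ae+k)^{e-1}$, from which $\frac{\mu}{\nu}T^{[a]}_{\nu,\,(\mu-\nu)/a}=\mu^{(\mu-\nu)/a}$ as required. (Sanity check at $k=1$, $e=2$: the star contributes weight $1$ and each of the two labelled paths contributes weight $a$, giving $2a+1=(2a+1)^{1}$, whereas a uniform weight per edge would give $3a^{2}$.) So your plan needs its combinatorial heart replaced by this weighted forest count; once that substitution is made, the rest of your argument goes through as written.
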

\begin{proof}
As in the proof of Proposition~\ref{th:pruning} the factor
\[
\frac{(2g-2+n+\frac{|\bm{\mu}|}{a})!}{(2g-2+n+\frac{|\bm{\nu}|}{a})! \, (\mu_1-\nu_1)! \, \cdots \, (\mu_n-\nu_n)!}
\]
accounts for the number of ways to choose the set of edge labels for the underlying pruned branching graph as well as the set of $\frac{\mu_i - \nu_i}{a}$ edge labels to be added to face $i$ for $i = 1, 2, \ldots, n$.  The factor $\mu^{\frac{\mu-\nu}{a}}$ on each face generalises the $a=1$ case where now we let $T_{k,e}^{[a]}$ be the number of rooted forests with $k$ labeled components and $e$ labeled edges, counted with weight $a^{\# \text{internal edges}}$. Then
\[
T_{k,e}^{[a]} = k(ae+k)^{e-1}
\]
and $\mu^{\frac{\mu-\nu}{a}}=\frac{\mu}{\nu}T_{\nu,\frac{\mu-\nu}{a}}^{[a]}.$
\end{proof}

\begin{proposition} \label{th:expansionorb}
The expansions of the $a$-fold Hurwitz differentials at $z_1 = z_2 = \cdots = z_n = 0$ satisfy
\[
\omega_{g,n} = \sum_{\mu_1, \ldots, \mu_n = 1}^\infty \widehat{K}_{g,n}^{[a]}(\mu_1, \mu_2, \ldots, \mu_n) \prod_{i=1}^n z_i^{\mu_i-1} \dd z_i, \qquad \text{for } 2g-2+n>0.
\]
\end{proposition}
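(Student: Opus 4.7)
The plan is to mirror the argument from Proposition~\ref{pro:expansion} for the orbifold case, using the pruning correspondence of Proposition~\ref{th:pruningorb} in place of Proposition~\ref{th:pruning}, and the orbifold spectral curve $x(z) = z\exp(-z^a)$, $y(z)=z^a$ in place of $x(z)=z\exp(-z)$, $y(z)=z$. Throughout I would need the orbifold analogue of polynomiality of $\widehat{K}^{[a]}_{g,n}$ so that the defining formal sum converges to an analytic multidifferential on a neighbourhood of $z_1=\cdots=z_n=0$; this is proved in the orbifold section by the same ELSV-substitution argument as in the simple case, and I would cite it here.

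First I would introduce the candidate multidifferential
\[
\overline{\omega}_{g,n} = \sum_{\mu_1, \ldots, \mu_n = 1}^\infty \widehat{K}^{[a]}_{g,n}(\mu_1, \ldots, \mu_n) \prod_{i=1}^n z_i^{\mu_i-1} \, \dd z_i
\]
on $C^n$, and establish $\overline{\omega}_{g,n} = \omega_{g,n}$ by showing both have the same expansion at $x_1 = \cdots = x_n = 0$. Since $\omega_{g,n}$ is already known to expand as $\sum \widehat{H}^{[a]}_{g,n}(\mu) \prod x_i^{\mu_i-1}\dd x_i$ by the orbifold Bouchard--Mari\~no theorem cited earlier, it suffices to verify that the coefficient of $\prod x_i^{\mu_i-1}\dd x_i$ in $\overline{\omega}_{g,n}$ equals $\widehat{H}^{[a]}_{g,n}(\mu_1,\ldots,\mu_n)$ for every positive $\mu_1,\ldots,\mu_n$.

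I would extract these coefficients by computing the iterated residue $\operatorname{Res}_{x_1=0}\cdots\operatorname{Res}_{x_n=0}\overline{\omega}_{g,n}\prod x_i^{-\mu_i}$. Changing variables via $x_i = z_i \exp(-z_i^a)$ converts $x_i^{-\mu_i}$ into $z_i^{-\mu_i}\exp(\mu_i z_i^a)$, and expanding the exponential gives
\[
x_i^{-\mu_i} = \sum_{m_i = 0}^\infty \frac{\mu_i^{m_i}}{m_i!}\, z_i^{a m_i - \mu_i}.
\]
Inserting this into $\overline{\omega}_{g,n}$ and taking residues at $z_i=0$ selects $a m_i = \mu_i - \nu_i$, forcing $m_i = \tfrac{\mu_i-\nu_i}{a}$ and in particular $\nu_i \equiv \mu_i \pmod a$. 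The residue therefore collapses to
\[
\sum_{\nu_1, \ldots, \nu_n = 1}^{\mu_1, \ldots, \mu_n} \widehat{K}^{[a]}_{g,n}(\nu_1, \ldots, \nu_n) \prod_{i=1}^n \frac{\mu_i^{\frac{\mu_i-\nu_i}{a}}}{(\frac{\mu_i-\nu_i}{a})!},
\]
which by Proposition~\ref{th:pruningorb} is exactly $\widehat{H}^{[a]}_{g,n}(\mu_1,\ldots,\mu_n)$.

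The one nontrivial point—which I would flag explicitly—is ensuring that the change of variables $x_i = z_i \exp(-z_i^a)$ is valid for the residue computation: it is a local biholomorphism near the origin (since $\dd x/\dd z|_{z=0}=1$), so residues at $x_i = 0$ and $z_i = 0$ agree, and the expansion of $\exp(\mu_i z_i^a)$ as a power series in $z_i$ is legitimate inside the residue. This is a mild generalisation of the $a=1$ case, and no new combinatorial input is needed beyond Proposition~\ref{th:pruningorb}; the main obstacle is simply being careful with the divisibility constraint $\nu_i \equiv \mu_i\pmod a$ that the orbifold residue enforces, which is compatible with Proposition~\ref{th:pruningorb} because the factor $(\tfrac{\mu_i-\nu_i}{a})!$ appearing there likewise vanishes in denominator sense unless $a\mid \mu_i-\nu_i$.
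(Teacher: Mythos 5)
Your proposal is correct and follows essentially the same route as the paper: define $\overline{\omega}_{g,n}$ from the pruned numbers, extract coefficients via the iterated residue $\operatorname{Res}_{x_i=0}\overline{\omega}_{g,n}\prod x_i^{-\mu_i}$ under the substitution $x_i=z_i\exp(-z_i^a)$, and match the result to $\widehat{H}^{[a]}_{g,n}$ using Proposition~\ref{th:pruningorb}. Your expansion $x_i^{-\mu_i}=\sum_{m_i}\frac{\mu_i^{m_i}}{m_i!}z_i^{am_i-\mu_i}$ is in fact the correct one (the paper's displayed $\mu_i^{am_i}$ in the numerator is a typo, as its final line confirms), and your explicit attention to the divisibility constraint $a\mid\mu_i-\nu_i$ and to the local biholomorphy of the change of variables is consistent with, if slightly more careful than, the paper's treatment.
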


\begin{proof}
Recall that
\[
\omega_{g,n} = \sum_{\mu_1, \ldots, \mu_n = 1}^\infty \widehat{H}_{g,n}^{[a]}(\mu_1, \mu_2, \ldots, \mu_n) \prod_{i=1}^n  x_i^{\mu_i-1} \dd x_i,
\]
and define
\[
\overline{\omega}_{g,n} = \sum_{\mu_1, \ldots, \mu_n = 1}^\infty \widehat{K}_{g,n}^{[a]}(\mu_1, \mu_2, \ldots, \mu_n) \prod_{i=1}^n z_i^{\mu_i-1} \dd z_i.
\]

We will show that $\overline{\omega}_{g,n} = \omega_{g,n}$ for $2g-2+n > 0$ by calculating the following residue.
\begin{align*}
\mathop{\text{Res}}_{x_1=0} \cdots \mathop{\text{Res}}_{x_n=0} \overline{\omega}_{g,n} \prod_{i=1}^n x_i^{-\mu_i} &= \mathop{\text{Res}}_{z_1=0} \cdots \mathop{\text{Res}}_{z_n=0} \sum_{\nu_1, \ldots, \nu_n = 1}^\infty \widehat{K}_{g,n}^{[a]}(\nu_1, \ldots, \nu_n) \prod_{i=1}^n  z_i^{\nu_i-1} \dd z_i \left[ z_i \exp(-z_i^a) \right]^{-\mu_i} \\
&= \mathop{\text{Res}}_{z_1=0} \cdots \mathop{\text{Res}}_{z_n=0} \sum_{\nu_1, \ldots, \nu_n = 1}^\infty \widehat{K}_{g,n}^{[a]}(\nu_1, \ldots, \nu_n) \prod_{i=1}^n z_i^{\nu_i-1} \dd z_i \, z_i^{-\mu_i} \sum_{m_i=0}^\infty \frac{\mu_i^{am_i}}{m_i!} z_i^{am_i} \\
&= \sum_{\nu_1, \ldots, \nu_n = 1}^{\mu_1, \ldots, \mu_n} \widehat{K}_{g,n}^{[a]}(\nu_1, \ldots, \nu_n) \prod_{i=1}^n \mathop{\text{Res}}_{z_i=0} z_i^{\nu_i-1} \dd z_i \, z_i^{-\mu_i} \sum_{m_i=0}^\infty \frac{\mu_i^{am_i}}{m_i!} z_i^{am_i} \\
&= \sum_{\nu_1, \ldots, \nu_n = 1}^{\mu_1, \ldots, \mu_n} \widehat{K}_{g,n}^{[a]}(\nu_1, \ldots, \nu_n) \prod_{i=1}^n\frac{\mu_i^{\frac{\mu_i-\nu_i}{a}}}{(\frac{\mu_i-\nu_i}{a})!} \\
&= \widehat{H}_{g,n}^{[a]}(\mu_1, \ldots, \mu_n)
\end{align*}
It follows that
\[
\overline{\omega}_{g,n} = \sum_{\mu_1, \ldots, \mu_n = 1}^\infty \widehat{H}_{g,n}(\mu_1, \mu_2, \ldots, \mu_n) \prod_{i=1}^n \mu_i x_i^{\mu_i-1} \dd x_i = \omega_{g,n}. \qedhere
\]
\end{proof}

Recall that
\[
H_{g,n}^{[a]}(\mu_1, \ldots, \mu_n) = a^{1-g+d/a} \prod_{i=1}^n \frac{(\mu_i/a)^{\lfloor \mu_i/a \rfloor}}{\lfloor \mu_i/a \rfloor!} \times Q_{g,n}^{[a]}(\mu_1, \ldots, \mu_n) = a^{1-g+\sum \{\mu_i/a\}} \prod_{i=1}^n \frac{\mu_i^{\lfloor \mu_i/a \rfloor}}{\lfloor \mu_i/a \rfloor !} \times Q_{g,n}^{[a]}(\mu_1, \ldots, \mu_n)
\]

\begin{proposition} 
For a fixed positive integer $a$ and $2g-2+n > 0$, the normalised pruned orbifold Hurwitz number $\widehat{K}_{g,n}^{[a]}(\mu_1, \mu_2,  \ldots, \mu_n)$ is a quasi-polynomial modulo $a$ in $\mu_1, \mu_2, \ldots, \mu_n$ of degree $6g-6+3n$.
\end{proposition}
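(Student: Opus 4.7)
The plan is to mimic the proof of Proposition~\ref{th:polynomial}: feed the orbifold ELSV formula for $\widehat{H}^{[a]}_{g,n}(\mu)$ into the triangular system of Proposition~\ref{th:pruningorb} and invert to express $\widehat{K}^{[a]}_{g,n}$ as a linear combination of universal ``$q$-functions'' weighted by Hurwitz--Hodge intersection numbers. The quasi-polynomial structure on each residue sector will follow from the polynomiality of these $q$-functions once restricted to their residue class.

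Concretely, for each residue $r \in \{0,1,\ldots,a-1\}$ and each $d \ge 0$, I would introduce a sequence $q^{[a],r}_d$ defined on $\{\nu \in \bz^+ : \nu \equiv r \pmod{a}\}$ by the triangular system
\[
\frac{\mu^{\lfloor \mu/a \rfloor + d + 1}}{\lfloor \mu/a \rfloor!} \;=\; \sum_{\substack{1 \le \nu \le \mu \\ \nu \equiv r \,(\bmod\,a)}} q^{[a],r}_d(\nu) \cdot \nu \cdot \frac{\mu^{(\mu-\nu)/a}}{((\mu-\nu)/a)!}, \qquad \mu \equiv r \pmod{a},
\]
which reduces to the $q_d$ of Section~\ref{sec:prusim} when $a=1$. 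The central technical claim is that, on its residue class, $q^{[a],r}_d$ agrees with the restriction of a polynomial of degree $2d$ in $\nu$. I would prove this by induction on $d$. The base case $d=0$ is an $a$-weighted refinement of Cayley's formula, built from the rooted forest count $T^{[a]}_{k,e} = k(ae+k)^{e-1}$ and the relation $\mu^{(\mu-\nu)/a} = (\mu/\nu)\,T^{[a]}_{\nu,(\mu-\nu)/a}$ used in the proof of Proposition~\ref{th:pruningorb}, with the double count underlying $q_0 = 1$ in the simple case transposed into a count involving the weight $a^{\#\mathrm{internal\ edges}}$. The inductive step proceeds exactly as in equation~(\ref{eq:qpolynomials}): multiplying the defining identity by $\mu$, writing $\mu = \nu + (\mu-\nu)$, and reindexing the second summand by $\nu \mapsto \nu - a$ along the residue class yields a recursion
\[
q^{[a],r}_{d+1}(\nu) \;=\; q^{[a],r}_{d+1}(\nu - a) \,+\, \nu\, q^{[a],r}_d(\nu),
\]
which preserves polynomiality on the residue class and bumps the degree by exactly two.

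With the $q$-functions in hand, I would substitute the orbifold ELSV formula
\[
\widehat{H}^{[a]}_{g,n}(\mu) \;=\; a^{1-g+\sum\{\mu_i/a\}} \sum_{|\bm{d}|+\ell=3g-3+n} (-a)^\ell \, \langle \tau_{d_1}\cdots\tau_{d_n}\lambda^U_\ell \rangle^{[a]}_{g,[-\mu]} \prod_{i=1}^n \frac{\mu_i^{\lfloor \mu_i/a\rfloor + d_i + 1}}{\lfloor \mu_i/a\rfloor!}
\]
into the pruning correspondence of Proposition~\ref{th:pruningorb} and match coefficients of $\prod_i \mu_i^{(\mu_i-\nu_i)/a}/((\mu_i-\nu_i)/a)!$. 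This gives the orbifold analogue of equation~(\ref{eq:prunedelsv}),
\[
\widehat{K}^{[a]}_{g,n}(\nu) \;=\; \prod_{i=1}^n \nu_i \cdot a^{1-g+\sum\{\nu_i/a\}} \sum_{|\bm{d}|+\ell=3g-3+n} (-a)^\ell \, \langle \tau_{d_1}\cdots\tau_{d_n}\lambda^U_\ell \rangle^{[a]}_{g,[-\nu]} \prod_{i=1}^n q^{[a],r_i}_{d_i}(\nu_i),
\]
where $r_i = \nu_i \bmod a$. On any fixed residue tuple $(r_1,\ldots,r_n)$, the exponential prefactor and the Hurwitz--Hodge intersection numbers on the twisted sector $[-\nu]$ depend only on the residues and so are constant, each $q^{[a],r_i}_{d_i}(\nu_i)$ contributes degree $2d_i$ by the claim, and the explicit $\prod \nu_i$ contributes $n$. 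The total degree on each residue sector is therefore at most $n + 2|\bm{d}|$, maximised at $\ell = 0$ to give $n + 2(3g-3+n) = 6g-6+3n$, establishing the quasi-polynomiality of $\widehat{K}^{[a]}_{g,n}$ modulo $a$ with the stated degree.

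The hard part will be the base case of the polynomiality induction, namely verifying the weighted Cayley identity that computes $q^{[a],r}_0$ in closed form on each residue class. In the simple case this was the tidy double count of labelled trees with distinguished initial and terminal vertices; in the orbifold case the relevant object is a weighted rooted forest, and one must track the factor $a^{\#\mathrm{internal\ edges}}$ carefully through the combinatorial correspondence used in the proof of Proposition~\ref{th:pruningorb}. Once this identity is in place, the inductive step and the subsequent matching of coefficients against the orbifold ELSV formula are routine transcriptions of the $a = 1$ argument into residue-class-indexed form.
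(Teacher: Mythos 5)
Your proposal takes a genuinely different route from the paper: the paper proves this proposition by observing that $\omega_{g,n}$ is a rational multidifferential on the spectral curve with poles only at the zeros of $\dd x$, so that its Taylor coefficients at $z=0$ are automatically quasi-polynomial mod $a$, with degree controlled by the pole order. You instead follow the ELSV-plus-$q$-polynomial route of Proposition~\ref{th:polynomial}, which the paper asserts is possible but does not carry out. Unfortunately, your central technical claim is false as stated.

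The claim that $q^{[a],r}_d$ is the restriction of a polynomial of degree $2d$ fails for $a \geq 2$. Solve your triangular system directly for $a=2$, $r=0$, $d=0$: taking $\mu = 2, 4, 6, 8$ gives $q^{[2],0}_0(2)=2$, $q^{[2],0}_0(4)=4$, $q^{[2],0}_0(6)=8$, $q^{[2],0}_0(8)=16$, i.e.\ $q^{[2],0}_0(\nu) = 2^{\nu/2}$; similarly $q^{[2],1}_0(\nu) = 2^{(\nu-1)/2}$. The source of the problem is visible in your own inductive step: since $\mu - \nu = a \cdot \frac{\mu-\nu}{a}$, the identity
\[
(\mu-\nu)\,\frac{\mu^{(\mu-\nu)/a}}{\left(\frac{\mu-\nu}{a}\right)!} \;=\; a\,\mu\,\frac{\mu^{(\mu-\nu)/a - 1}}{\left(\frac{\mu-\nu}{a}-1\right)!}
\]
carries a factor of $a$ that you dropped, so the true recursion is $q^{[a],r}_{d+1}(\nu) = a\, q^{[a],r}_{d+1}(\nu-a) + \nu\, q^{[a],r}_d(\nu)$, and the coefficient $a$ in front of the shifted term forces growth of order $a^{\lfloor \nu/a \rfloor}$ rather than preserving polynomiality. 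This is not a reparable bookkeeping slip inside your framework as written: the factor $a^{\lfloor \nu/a\rfloor}$ is forced by the mismatch between the base $\mu$ of the power and the argument $\lfloor \mu/a\rfloor$ of the factorial on the two sides of your defining system, so your final expression for $\widehat{K}^{[a]}_{g,n}(\nu)$ acquires a factor $\prod_i a^{\lfloor \nu_i/a\rfloor}$ that cannot be cancelled by the prefactor $a^{1-g+\sum\{\nu_i/a\}}$, which is constant on each residue class. To rescue the argument you would need to renormalise, say $\tilde{q}^{[a],r}_d(\nu) = a^{-\lfloor \nu/a\rfloor} q^{[a],r}_d(\nu)$, prove that these are polynomial of degree $2d$ on residue classes (this is now plausible: $\tilde{q}_0 = 1$ and $\tilde{q}_{d+1}(\nu) = \tilde{q}_{d+1}(\nu-a) + \nu\, \tilde{q}_d(\nu)$), and then show that the leftover exponential factor $a^{\sum \lfloor \nu_i/a\rfloor}$ is absorbed by the correct normalisation of the orbifold ELSV formula against the pruning kernel of Proposition~\ref{th:pruningorb} --- a genuine verification that your ``routine transcription'' does not supply. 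The paper's spectral-curve proof sidesteps this normalisation issue entirely.
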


\begin{proof}
One can prove this in an analogous way to Proposition~\ref{th:polynomial} but instead we will use the spectral curve.  By Proposition~\ref{th:expansionorb},
$$\omega_{g,n} = \sum_{\mu_1, \ldots, \mu_n = 1}^\infty \widehat{K}_{g,n}^{[a]}(\mu_1, \mu_2, \ldots, \mu_n) \prod_{i=1}^n z_i^{\mu_i-1} \dd z_i$$
is a meromorphic multidifferential on the curve $(x(z),y(z)) = (z \exp(-z^a), z^a)$ and hence it is rational in $z$.  Furthermore, by the general theory of Eynard-Orantin invariants it has poles only at the zeros of $dx_i$, hence when $0=dx_i=(1-z_i^a)\exp(-z_i^a)dz_i$, i.e.\ at the $a$th roots of unity $z_i^a=1$. A rational function in $z$ with poles only at $\{z\mid z^a=1\}$ has an expansion $\sum p(n)z^n$ around $z=0$ for $p(n)$ a quasi-polynomial mod $a$ meaning it is polynomial on each coset of the finite index sublattice $a\bz^n$ of  $\bz^n$.  Its degree follows from the order of the poles of $\omega_{g,n}$ which is $6g-4+2n$ again by a general property of Eynard-Orantin invariants.

\end{proof}

\section{Belyi Hurwitz numbers}

For $n>0$ and $g\geq 0$ define the set of Belyi Hurwitz covers:
\begin{align*}
Z_{g,n}(\mu)=\{f:\Sigma\to S^2\mid& \Sigma \text{ connected genus } g \text{ unramified over } S^2-\{0,1,\infty\};\\
&f^{-1}(\infty)=(p_1,...,p_n) \text{ with respective ramification }\mu=(\mu_1,...,\mu_n);\\
& \quad\text{ ramification }(2,2,...,2) \text{ over }1; \quad\text{ arbitrary ramification over }0 \} /\sim
\end{align*}
where $\{f_1: \Sigma_1 \to \mathbb{CP}^1\}\sim\{f_2: \Sigma_2 \to \mathbb{CP}^1\}$ if there exists $h: \Sigma_1 \to \Sigma_2$ that satisfies $f_1 = f_2 \circ h$ and preserves the labels over $\infty$.

Define the Belyi Hurwitz numbers:
$$M_{g,n}(\mu_1,...,\mu_n)=\sum_{f\in Z_{g,n}(\mu)}\frac{1}{|\text{Aut\ }f|}.$$
 
Now define the set of pruned Belyi Hurwitz covers:
\begin{align*}
Z^0_{g,n}(\mu)=\{f\in Z_{g,n}(\mu)\mid& \text{ all points in }f^{-1}(0) \text{ have nontrivial ramification}\} 
\end{align*}
and the corresponding pruned Belyi Hurwitz numbers:
$$N_{g,n}(\mu_1,...,\mu_n)=\sum_{f\in Z^0_{g,n}(\mu)}\frac{1}{|\text{Aut\ }f|}.$$
 A recursion expressing $M_{g,n}$ in terms of $M_{g',n'}$ uses a cut and join argument known as Tutte's recursion in the planar case \cite{TutCen} and more generally arises out of matrix integral expansions \cite{BIZQua,EOrTop}.  See also \cite{DMSSSpe} where $M_{g,n}(\mu)$ is treated as a generalised Catalan number.  Recursions expressing $N_{g,n}$ in terms of $N_{g',n'}$ were given in \cite{NorCou}.
 
To any $f\in Z_{g,n}(\mu)$ one can associate a fatgraph $\Gamma_f=f^{-1}[0,1]\subset\Sigma$, meaning that $\Sigma-f^{-1}[0,1]$ is a union of disks, or equivalently a discrete surface of genus $g$ obtained by gluing together $n$ polygonal faces of perimeters $\mu_1,...,\mu_n$.    Equivalently, a fatgraph is described by its set of oriented edges $X$ equipped with automorphisms $\tau_i:X\to X$, $i=0,1$.  Then $\Gamma_f=(X_f,\tau_0,\tau_1)$ where $X_f=f^{-1}(0,1)$, $\tau_0:X_f\to X_f$ is the monodromy map around 0; $\tau_1:X_f\to X_f$ is the monodromy map around 1.  The vertices of the fatgraph or polygonal faces correspond to $V_f=f^{-1}(0)\cong X_f/\tau_0$ and the edges correspond to $E_f= X_f/\tau_1\cong f^{-1}(1)$.  The boundary components correspond to $X_f/\tau_2\cong f^{-1}(\infty)$ for $\tau_2=\tau_0\tau_1$ and its length is the size of the orbit of $\tau_2$.  An automorphism of a fatgraph $\Gamma=(X,\tau_0,\tau_1)$ is a map $g:X\to X$ that commutes with $\tau_0$ and $\tau_1$.  From the fatgraph one can reconstruct the map $f$.  Hence the Belyi Hurwitz numbers can be equivalently defined as follows:
$$M_{g,n}(\mu_1,...,\mu_n)=\sum_{\Gamma\in \fat(\mu)}\frac{1}{|\text{Aut\ }\Gamma|}$$
where $\fat(\mu)$ is the set of all genus $g$ fatgraphs with $n$ labeled boundary components of respective lengths $(\mu_1,...,\mu_n)$.  Similarly
$$N_{g,n}(\mu_1,...,\mu_n)=\sum_{\Gamma\in \fat^0(\mu)}\frac{1}{|\text{Aut\ }\Gamma|}$$
where $\fat^0(\mu)\subset\fat(\mu)$ consists of those fatgraphs with no valence 1 vertices---pruned fatgraphs.  It is this graph representation that justifies the term "pruned" Belyi Hurwitz number. 

Let $\modm_{g,n}$ be the moduli space of genus $g$ curves with $n$ labeled points.  For each $\mu=(\mu_1,..,\mu_n)$ there is the Penner-Harer-Mumford-Thurston cell decomposition  
\begin{equation}  \label{cell}
\modm_{g,n}\cong\bigcup_{\Gamma\in \fat}P_{\Gamma}(\mu_1,..,\mu_n)
\end{equation}
where the indexing set $\fat$ is the space of fatgraphs with all vertices of valence $\geq 3$, of genus $g$ and $n$ labeled boundary components.  The cell decomposition \eqref{cell} arises by the existence and uniqueness of Strebel differentials on a compact Riemann surface $\Sigma$ with $n$ labeled points $(p_1,...,p_n)$ and $n$ positive real values $(\mu_1,..,\mu_n)$.  A Strebel differential is a meromorphic quadratic differential $\omega$, holomorphic on $\Sigma-(\mu_1,..,\mu_n)$.  Any quadratic differential gives rise to vertical and horizontal foliations along which $\omega$ is real.  Along the horizontal and vertical foliations $\omega$ is real and positive, respectively negative.   In terms of a local coordinate $z$ away from zeros and poles one can write $\omega=\dd z^2=\dd x^2-\dd y^2+2i\dd x\dd y$ which is real and positive along $y=$ constant and negative along $x=$ constant.  A Strebel differential is distinguished by the fact that its horizontal foliation has compact leaves and its poles occur at the $p_k$ with principal part $\mu_k\dd z/z^2$.   It has one unique singular compact leaf which is a labeled fatgraph with lengths on edges.  The important point is that this singular compact leaf has no valence 1 vertices.  A valence 1 vertex corresponds to a singularity of the form $\dd z^2/z$ which is prohibited on Strebel differentials.  In summary, the Strebel differentials give rise to pruned fatgraphs with lengths on edges and no valence 2 vertices.  

The natural map $Z_{g,n}(\mu)\to\modm_{g,n}$ that sends $f:\Sigma\to S^2$ to its domain curve $(\Sigma,p_1,...,p_n)$, where $f(p_k)=(\infty)$, can be combined with the cell decomposition \eqref{cell} using the same $\mu$ to assign to $f$ a fatgraph $\Gamma^f$ with no valence 2 vertices.  In general, $\Gamma^f\neq\Gamma_f$.

Underlying $\Gamma_f$ is a fatgraph $\tilde{\Gamma}_f$ with no valence 2 vertices, essentially obtained by ignoring valence 2 vertices of $\Gamma_f$.  On the level of oriented edges $X_f$ and $\tilde{X}_f$, there are maps $\pi:X_f\to\tilde{X}_f$ and $\iota:\tilde{X}_f\to X_f$ satisfying $\pi\circ\iota=id$, $\pi\circ\tau_1=\tau_1\circ\pi$ and $\iota\circ\tau_0=\tau_0\circ\iota$.  The induced map $\pi_*:E_f\to\tilde{E}_f$ is surjective and one-to-one except on edges adjacent to valence 2 vertices, and $\iota_*:\tilde{V}_f\to V_f$ is injective with image all of $V_f$ except for valence 2 vertices.  For general $f\in Z_{g,n}(\mu)$, $\tilde{\Gamma}_f\neq\Gamma^f$ since $\Gamma_f$ usually has valence 1 vertices.  However 
$$f\in Z^0_{g,n}(\mu)\quad\Rightarrow\quad\tilde{\Gamma}_f=\Gamma^f.$$  
In other words $Z^0_{g,n}(\mu)$ sits naturally inside $\modm_{g,n}$ and this gives rise to a third description of $N_{g,n}(\mu_1,...,\mu_n)$ as the number of integral points inside rational polytopes making up the cells of $\modm_{g,n}$.  The cells of \eqref{cell} are compact convex polytopes 
$$P_{\Gamma}(\mu_1,..,\mu_n)=\{{\bf x}\in\br_+^{E(\Gamma)}|A_{\Gamma}{\bf x}=\mu\}$$
where $\mu=(\mu_1,..,\mu_n)\in\br^n$ and $A_{\Gamma}:\br^{E(\Gamma)}\to\br^n$ is the incidence matrix that maps an edge to the sum of its two incident boundary components.  Define $N_{\Gamma}(\mu_1,..,\mu_n)=\#\{\bz_+^{E(\Gamma)}\cap P_{\Gamma}(\mu_1,..,\mu_n)\}$.
Then
$$N_{g,n}(\mu_1,...,\mu_n)=\sum_{\Gamma\in \fat}\frac{1}{|Aut \Gamma|}N_{\Gamma}(\mu_1,...,\mu_n).$$
An important consequence of this realisation of $N_{g,n}(\mu)$ as counting integral points is the identity
$$N_{g,n}(0,...,0)=\chi(\modm_{g,n}).$$  One makes sense of evaluation of $N_{g,n}$ at $(0,...,0)$ by using the fact that $N_{g,n}(\mu_1,...,\mu_n)$ is quasi-polynomial in the $\mu_i$.

\subsection{The pruning correspondence}

\begin{proposition}[\cite{NScPol}]
\begin{equation}  \label{prunebelyi}
M_{g,n}(\mu_1, \ldots, \mu_n) \prod_{i=1}^n \mu_i = \sum_{\nu_1, \ldots, \nu_n = 1}^{\mu_1, \ldots, \mu_n} N_{g,n}(\nu_1, \ldots, \nu_n) \prod_{i=1}^n \nu_i \binom{\mu_i}{\frac{\mu_i-\nu_i}{2}}
\end{equation}
\end{proposition}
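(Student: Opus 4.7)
The plan is to prove the identity by a face-by-face decomposition of Belyi fatgraphs via pruning, reducing the problem to a classical Fuss-Catalan count.

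First, I would reinterpret both sides as enumerations of \emph{face-rooted} fatgraphs. The factor $\prod_i \mu_i$ on the left corresponds to a choice of distinguished edge-side on each of the $n$ boundary faces of a Belyi fatgraph, and the factor $\prod_i \nu_i$ on the right has the analogous meaning for pruned Belyi fatgraphs. Since a nontrivial automorphism of a fatgraph typically cannot fix an arbitrary marked edge-side on every face, this rooting converts the weighted sums $\sum 1/|\mathrm{Aut}|$ defining $M_{g,n}$ and $N_{g,n}$ into ordinary counts, circumventing automorphism issues once one checks compatibility between $\mathrm{Aut}\,\Gamma$ and $\mathrm{Aut}\,\tilde\Gamma$ face by face.

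Next, I would establish the pruning decomposition. Given any $\Gamma$ counted by $M_{g,n}(\mu)$, iteratively remove valence-$1$ vertices and their incident edges until the resulting fatgraph $\tilde\Gamma$ has all vertices of valence $\geq 2$; then $\tilde\Gamma$ is of type $(g;\nu)$ for some $\nu$ with $\nu_i \leq \mu_i$ and $\mu_i - \nu_i$ even. Reversing this, a Belyi fatgraph of type $(g;\mu)$ is recovered uniquely from $\tilde\Gamma$ together with the attachment, at each of the $\nu_i$ corners of face $i$, of a (possibly empty) rooted plane tree, where a corner is one of the $\nu_i$ gaps between consecutive half-edges in the cyclic orders around the vertices bounding face $i$. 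Each tree with $e$ edges contributes $2e$ to its face perimeter (traversed once on the way in and once on the way out by the face boundary), so the total number of tree-edges on face $i$ must equal $k_i := (\mu_i - \nu_i)/2$. This gives
\[
M_{g,n}(\mu_1,\ldots,\mu_n) = \sum_{\nu_1,\ldots,\nu_n=1}^{\mu_1,\ldots,\mu_n} N_{g,n}(\nu_1,\ldots,\nu_n) \prod_{i=1}^n D(\mu_i,\nu_i),
\]
where $D(\mu_i,\nu_i)$ counts tuples of $\nu_i$ rooted plane trees with a combined total of $k_i$ edges.

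The heart of the argument is then the evaluation of $D(\mu_i,\nu_i)$. Since rooted plane trees with $k$ edges are enumerated by the Catalan number $C_k$ with generating function $C(x) = \sum_{k \geq 0} C_k x^k$, we have $D(\mu_i,\nu_i) = [x^{k_i}]\,C(x)^{\nu_i}$. The classical Fuss--Catalan identity (provable via Lagrange inversion applied to $C(x) = 1 + xC(x)^2$, or via the cycle lemma) yields
\[
[x^{k}]\,C(x)^{\nu} \;=\; \frac{\nu}{\nu+2k}\binom{\nu+2k}{k} \;=\; \frac{\nu_i}{\mu_i}\binom{\mu_i}{(\mu_i-\nu_i)/2}.
\]
Substituting this into the pruning decomposition and multiplying both sides by $\prod_i \mu_i$ produces exactly \eqref{prunebelyi}.

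The main obstacle I anticipate is the bookkeeping around automorphisms: one must check that the pruning map descends to a genuine bijection at the level of face-rooted objects, i.e.\ that every automorphism of $\Gamma$ restricts to an automorphism of $\tilde\Gamma$ compatible with the tree decorations, and conversely. Face-rooting effectively trivializes the automorphism groups on both sides, but making this precise (rather than appealing to genericity) is the step requiring the most care; everything else reduces to the purely combinatorial identity above.
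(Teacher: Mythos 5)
Your proof is correct, but it takes a genuinely different route from the one written in the paper. The paper proves \eqref{prunebelyi} analytically: it starts from the known facts that the Eynard--Orantin invariants $\omega_{g,n}$ of the curve $x=z+1/z$ have $x$-expansion governed by $M_{g,n}$ and $z$-expansion governed by $N_{g,n}$, and then extracts the identity by computing $\mathop{\mathrm{Res}}\,\omega_{g,n}\prod x_i^{\mu_i}$ after substituting $x_i=z_i+1/z_i$ and expanding $(z_i+1/z_i)^{\mu_i}$ binomially. That argument is short but leans on two nontrivial imported theorems (the matrix-model/topological-recursion result for $M_{g,n}$ and the lattice-point result for $N_{g,n}$). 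Your argument is the self-contained combinatorial one that the paper only alludes to in its closing remark (``one can also give a combinatorial proof which simply formalises the fact that \ldots one can repeatedly remove vertices of degree 1\ldots''): you prune each fatgraph to its valence-$\geq 2$ core, observe that the removed material is a forest of plane trees attached at the $\nu_i$ corners of face $i$ contributing $2e$ to the perimeter per $e$ tree-edges, and evaluate the attachment count as $[x^{k_i}]C(x)^{\nu_i}=\frac{\nu_i}{\mu_i}\binom{\mu_i}{(\mu_i-\nu_i)/2}$ by Lagrange inversion; the factors $\prod\mu_i$ and $\prod\nu_i$ are correctly interpreted as face-rootings that rigidify both sides and dispose of the automorphism weights (on a connected fatgraph an automorphism fixing one oriented edge is the identity, so this is not merely a genericity statement). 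This is exactly parallel to the paper's combinatorial proof of Proposition~\ref{th:pruning} for simple Hurwitz numbers, with the Cayley-type forest count $\nu\mu^{\mu-\nu-1}$ replaced by the Fuss--Catalan count. What each approach buys: yours is elementary and explains \emph{why} the binomial kernel appears; the paper's makes the identity an instance of its general principle that pruning corresponds to re-expanding $\omega_{g,n}$ in the rational coordinate $z$ rather than in $x$. The one caveat in your version is the minor bookkeeping you already flag, plus the implicit exclusion of the unstable case $(g,n)=(0,1)$, where a fatgraph can prune away entirely; the paper's statement has the same implicit restriction.
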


\begin{proof}
\begin{align*}
M_{g,n}(\mu_1, \ldots, \mu_n) \prod_{i=1}^n \mu_i &= \mathop{\text{Res}}_{x_1=\infty} \cdots \mathop{\text{Res}}_{x_n=\infty} \omega_{g,n} \prod_{i=1}^n x_i^{\mu_i} \\
&= \mathop{\text{Res}}_{z_1=0} \cdots \mathop{\text{Res}}_{z_n=0} \sum_{\nu_1, \ldots, \nu_n = 1}^\infty N_{g,n}(\nu_1, \ldots, \nu_n) \prod_{i=1}^n \nu_i z_i^{\nu_i-1} \dd z_i x_i^{\mu_i} \\
&= \mathop{\text{Res}}_{z_1=0} \cdots \mathop{\text{Res}}_{z_n=0} \sum_{\nu_1, \ldots, \nu_n = 1}^\infty N_{g,n}(\nu_1, \ldots, \nu_n) \prod_{i=1}^n \nu_i z_i^{\nu_i-1} \dd z_i \left( z_i + \frac{1}{z_i} \right)^{\mu_i} \\
&= \mathop{\text{Res}}_{z_1=0} \cdots \mathop{\text{Res}}_{z_n=0} \sum_{\nu_1, \ldots, \nu_n = 1}^\infty N_{g,n}(\nu_1, \ldots, \nu_n) \prod_{i=1}^n \nu_i z_i^{\nu_i-1} \dd z_i \sum_{k_i=0}^{\mu_i} \binom{\mu_i}{k_i} z_i^{\mu_i-2k_i} \\
&= \sum_{\nu_1, \ldots, \nu_n = 1}^{\mu_1, \ldots, \mu_n} N_{g,n}(\nu_1, \ldots, \nu_n) \prod_{i=1}^n \nu_i \binom{\mu_i}{\frac{\mu_i+\nu_i}{2}} \\
&= \sum_{\nu_1, \ldots, \nu_n = 1}^{\mu_1, \ldots, \mu_n} N_{g,n}(\nu_1, \ldots, \nu_n) \prod_{i=1}^n \nu_i \binom{\mu_i}{\frac{\mu_i-\nu_i}{2}} 
\end{align*}
hence \eqref{prunebelyi} follows.

The significance of this result here is that one can also give a combinatorial proof which simply formalises the fact that, given a fatgraph, one can repeatedly remove vertices of degree 1 and their incident edges to obtain a pruned fatgraph in a unique way.  Hence this gives another example of  pruning.
\end{proof}
\begin{remark}
We could have naturally defined $M_{g,n}(\mu_1, \ldots, \mu_n)$ and $N_{g,n}(\mu_1, \ldots, \mu_n)$ to include a factor of  $\mu_1 \cdots \mu_n$ in which case \eqref{prunebelyi} would look much more like the pruning correspondence for simple Hurwitz numbers in Proposition~\ref{th:pruning} differing by a simple combinatorial factor.
\end{remark}

\section{Gromov-Witten invariants of $\bp^1$}  \label{sec:gw}

In this section we apply the idea of pruning to the Gromov-Witten invariants of $\bp^1$.  Unlike the previous sections, the aim here is to predict interesting structure and the problem is not yet resolved.

Assemble the Gromov-Witten invariants into the generating function
\[\Omega^g_n(x_1,...,x_n)=\sum_{\bf\mu}\left\langle \prod_{i=1}^n \tau_{\mu_i}(\omega) \right\rangle^g_d\cdot\prod_{i=1}^n(\mu_i+1)!x_i^{-\mu_i-2}dx_i.\]
\begin{equation}  \label{eq:lnz}
C=\begin{cases}x=z+1/z\\ 
y=\ln{z}\sim\sum\frac{(1-z^2)^k}{\hspace{-3mm}-2k}.
\end{cases}
\end{equation} 
$\omega^g_n$ of $(C,x,y_N)$ stabilises for $N\geq 6g-6+2n$, where $y_N$ are the partial sums for the expansion for $y$.
\begin{theorem}[\cite{DOSSIde,NScGro}]
For $2g-2+n>0$, the Eynard-Orantin invariants of the curve $C$ defined in (\ref{eq:lnz}) agree with the generating function for the Gromov-Witten invariants of $\bp^1$:
\[ \omega^g_n\sim\Omega^g_n(x_1,...,x_n).\]
More precisely, $\Omega^g_n(x_1,...,x_n)$ gives an analytic expansion of $\omega^g_n$ around a branch of $\{x_i=\infty\}$.
\end{theorem}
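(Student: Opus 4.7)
The plan is to verify the identification by matching the topological recursion on $C$ with a recursion satisfied by the GW generating function, following the Givental-type strategy in \cite{DOSSIde}. First I would analyse the spectral curve: $dx = (1 - z^{-2})\,dz$ vanishes exactly at $z = \pm 1$, so the branch points are $\alpha_{\pm} = \pm 1$, each with local deck involution $\sigma: z \mapsto 1/z$. Introducing local coordinates $w_{\pm}$ with $x - x(\alpha_{\pm}) = \pm \tfrac{1}{2} w_{\pm}^2$, I would expand the symplectic form $y\,dx$ in $w_{\pm}$ using the series (\ref{eq:lnz}); the Taylor coefficients of this expansion encode the data that, in the DOSS formalism, should correspond to the Givental $R$-matrix and translation data of a semisimple CohFT.

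Next, I would identify the relevant CohFT. Stationary Gromov--Witten theory of $\bp^1$ is a semisimple two-dimensional CohFT on $H^{*}(\bp^1,\bq)$, and its Givental decomposition produces an $R$-matrix $R(\zeta) \in \text{End}(H^{*}(\bp^1))[[\zeta]]$ together with translation data, both computable from the eigenvalues of the quantum cup product. The key input is the DOSS theorem: whenever the Givental data of a semisimple CohFT matches the local expansion of $y\,dx$ at the branch points of a spectral curve, the genus expansion of the CohFT partition function assembles into the Eynard--Orantin invariants of that curve. I would explicitly compute the canonical (idempotent) coordinates and eigenvalues for the $\bp^1$ Frobenius structure and check that the columns of $R$ agree termwise with the expansion of $\ln z$ around $z = \pm 1$ provided by (\ref{eq:lnz}).

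After establishing the match of Givental data, I would verify the matching in low genus as a sanity check: compute $\omega_{0,3}$ and $\omega_{1,1}$ from the curve via the standard residue formulae and compare against the known $\langle \tau_{\mu_1}(\omega)\tau_{\mu_2}(\omega)\tau_{\mu_3}(\omega)\rangle^{0}_{d}$ and $\langle \tau_{\mu}(\omega)\rangle^{1}_{d}$ to pin down normalisations, in particular the sign choice of $y = \ln z$ at each branch.

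The main obstacle will be Step~3: the spectral curve is transcendental rather than algebraic, and the defining expansion of $y$ in (\ref{eq:lnz}) is a formal series in $(1-z^2)$ which must be interpreted carefully near each branch point. Matching this asymptotic (non-convergent) expansion with the formal $R(\zeta)$ of the GW CohFT requires tracking the divergent behaviour coming from $\ln z$ on both sides. An alternative route, modelled on \cite{NScGro}, is to bypass Givental and instead verify directly that the generating function $\Omega^{g}_n$ satisfies the same recursion as $\omega^{g}_n$ by translating the Virasoro and Toda constraints for stationary GW invariants of $\bp^1$ into residue identities at $z = \pm 1$; in that variant the core difficulty moves to establishing the dictionary between the Virasoro operators annihilating the GW potential and the Eynard--Orantin residue recursion on $C$.
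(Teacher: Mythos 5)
The first thing to say is that the paper itself offers no proof of this statement: it is quoted with attribution to \cite{DOSSIde,NScGro}, and the only commentary the paper adds is the sentence immediately following the theorem, namely that the result was proven for $g=0,1$ in \cite{NScGro} and for all $g$ in \cite{DOSSIde}. So there is no internal argument to compare your proposal against; the relevant comparison is with the cited literature.

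Measured against that, your outline correctly reconstructs the architecture of both known proofs. The branch point analysis is right ($dx=(1-z^{-2})\,dz$ vanishes at $z=\pm1$ with global involution $z\mapsto 1/z$), the DOSS route via matching Givental $R$-matrix and translation data of a semisimple CohFT to the local expansion of $y\,dx$ is exactly the strategy of \cite{DOSSIde}, and your ``alternative route'' is essentially what \cite{NScGro} does in low genus. Two points deserve sharpening. First, the stationary sector of GW theory of $\bp^1$ is not by itself a CohFT; the Givental decomposition applies to the full theory on $H^*(\bp^1)$, and the stationary descendants $\tau_k(\omega)$ are extracted afterwards, which is where the combinatorial factors $(\mu_i+1)!$ in $\Omega^g_n$ come from. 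Second, the issue you flag as the ``main obstacle'' is resolved in the paper's setup by the stabilisation statement accompanying equation~(\ref{eq:lnz}): the invariants $\omega^g_n$ of $(C,x,y_N)$ are independent of $N$ once $N\geq 6g-6+2n$, because the residue recursion at $z=\pm1$ only sees finitely many Taylor coefficients of $y$ at the branch points; this is the device that makes the transcendental $y=\ln z$ harmless. As written, your proposal is a plan rather than a proof — the actual computation of the $R$-matrix of the $\bp^1$ Frobenius manifold and its termwise match with the asymptotic expansion of $\ln z$ at $z=\pm1$ is the entire content of \cite{DOSSIde} and is not carried out — but as a reconstruction of the strategy behind the cited theorem it is accurate.
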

This was proven in \cite{NScGro} for the case $g=0,1$ and for all $g$ in \cite{DOSSIde}.  

The expansion of  $\omega^g_n$ around $z_i=0$
\[ \omega^g_n\sim\sum_{\mu}N_{g,n}(\mu)\prod \mu_i z_i^{\mu_i-1}dz_i\]
has coefficients $N_{g,n}(\mu)$ which are quasi-polynomial mod $2$.  The simplest quasi-polynomials are shown in the table.
\begin{table}[h!] 
\begin{spacing}{1.4}  
\begin{tabular}{||l|c|c|c||} 
\hline\hline

{\bf g} &{\bf n}&\# odd $\mu_i$&$N_{g,n}(\mu_1,...,\mu_n)$\\ \hline

0&3&0,2&$0$\\ \hline
0&3&1,3&$1$\\ \hline
1&1&0&$0$\\ \hline
1&1&1&$\frac{1}{48}(\mu_1^2-3)$\\ \hline
0&4&0,4&$\frac{1}{4}(\mu_1^2+\mu_2^2+\mu_3^2+\mu_4^2)$\\ \hline
0&4&1,3&$0$\\ \hline
0&4&2&$\frac{1}{4}(\mu_1^2+\mu_2^2+\mu_3^2+\mu_4^2-2)$\\ \hline
1&2&0&$\frac{1}{384}(\mu_1^2+\mu_2^2-8)(\mu_1^2+\mu_2^2)$\\ \hline
1&2&1&$0$\\ \hline
1&2&2&$\frac{1}{384}(\mu_1^2+\mu_2^2-6)(\mu_1^2+\mu_2^2-2)$\\ \hline
2&1&0&$0$\\ \hline
2&1&1&$\frac{1}{2^{16}3^35}(\mu_1^2-1)^2(5\mu_1^4-186\mu_1^2+1605)$\\
\hline\hline
\end{tabular} 
\end{spacing}
\end{table}

The quasi-polynomials satisfy the following relations.
\begin{align*}
N_{g,n+1}(0,\mu_1,\dots \mu_n) &=\sum_{j=1}^n\sum_{k=1}^{\mu_j}kN_{g,n}(\mu_1,\dots,\mu_n)|_{\mu_j=k}
\\
N_{g,n+1}(1,\mu_1,\dots,\mu_n)
&=\sum_{j=1}^n\sum_{k=1}^{\mu_j}kN_{g,n}(\mu_1,\dots,\mu_n)|_{\mu_j=k}+\frac{\chi-|\mu|}{2}N_{g,n}(\mu_1,\dots, \mu_n)\end{align*}
for $\chi=2-2g-n$ and $\displaystyle|\mu|=\sum_{j=1}^n\mu_j$.  \\

A natural question is whether the quasi-polynomials $N_{g,n}$ obtained from the expansion of $\omega^g_n$ around $z_i=0$ yield an interesting and useful enumerative problem.  In all calculated cases the genus 0 invariants $N_{0,n}$ take integral values which lends evidence that there may be an underlying enumerative problem.

\subsection{Cycle Hurwitz problem.}
The following Hurwitz problem was introduced and studied by Okounkov and Pandharipande in \cite{OPaGro}.  Given $\{x_1,...,x_n\}\subset S^2$, define
\begin{align*}
\cc_{g,n}(\mu)=\{f:\Sigma\to S^2\mid& \Sigma \text{ connected genus } g, \text{ unramified over } S^2-\{x_1,...,x_n\};\\
&\hspace{3cm}\text{ ramification }(\mu_k,1,1,...,1) \text{ over }x_k \} /\sim.
\end{align*}
Define the cycle Hurwitz numbers:
\begin{equation*} 
P_{g,n}(\mu_1,...,\mu_n)=\sum_{f\in\cc_{g,n}(\mu)}\frac{1}{|\text{Aut\ }f|}.
\end{equation*}

\begin{theorem}[Okounkov-Pandharipande \cite{OPaGro}] 
The cycle Hurwitz numbers $P_{g,n}(\mu_1,...,\mu_n)$ contribute all stable maps with smooth domain curves to $\displaystyle \prod_{i=1}^n(\mu_i-1)!\left\langle \prod_{i=1}^n \tau_{\mu_i-1}(\omega)\right\rangle^g_d$.\quad 
\end{theorem}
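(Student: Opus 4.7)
The approach centers on the Okounkov-Pandharipande GW/Hurwitz correspondence, which identifies stationary descendant invariants $\langle\prod \tau_{\mu_i-1}(\omega)\rangle^g_d$ with weighted counts of branched covers of $\bp^1$. Under Okounkov's infinite-wedge formalism, the descendant class $(k-1)!\,\tau_{k-1}(\omega)$ corresponds to the completed $k$-cycle operator $\overline{(k)} = (k,1,\ldots,1) + \sum_{|\lambda|<k} c_\lambda\,\lambda$ in the symmetric group algebra (with parts of $\lambda$ of the same parity as $k$). The factor $\prod(\mu_i-1)!$ in the theorem statement is precisely the conversion from a product of descendant insertions to a product of completed cycles, so the corresponding vacuum expectation decomposes as a finite sum over ramification profiles $\lambda^{(1)},\ldots,\lambda^{(n)}$, each summand being a (disconnected) Hurwitz count of covers with ramification $\lambda^{(i)}$ over $x_i$.

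The key structural claim is then that the smooth-domain contribution to the GW integral is captured exactly by the leading, pure-cycle profile $\lambda^{(i)} = (\mu_i,1,\ldots,1)$ over each $x_i$. Geometrically, the correction terms with $|\lambda^{(i)}| < \mu_i$ arise from degenerations of the domain curve: when a smooth cover limits to a stable map whose domain acquires a node above $x_i$, the associated ramification profile strictly decreases in size as parts coalesce into the node. Tracking the virtual fundamental class through the boundary strata of $\overline{\mathcal{M}}_{g,n}(\bp^1,d)$ shows that all nodal-domain contributions match the lower-order terms in the completed-cycle expansion, while the pure $(\mu_i,1,\ldots,1)$ contribution is supplied solely by smooth covers. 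The latter count---connected smooth-domain covers with ramification $(\mu_i,1,\ldots,1)$ over $x_i$ and no other branching---is, by definition, $P_{g,n}(\mu_1,\ldots,\mu_n)$, yielding the theorem.

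The principal obstacle is the rigorous matching between completed-cycle corrections and nodal-domain boundary contributions. One natural route is virtual localization on $\overline{\mathcal{M}}_{g,n}(\bp^1,d)$ under the standard $\bc^\times$-action on $\bp^1$: fixed loci are indexed by decorated bipartite graphs whose vertices correspond to domain components contracted over $0$ and $\infty$. Smooth-domain fixed loci have the simplest combinatorial shape (a single vertex over $\infty$ incident to all edges, with one distinguished ramified edge at each marked point), while nodal-domain contributions correspond to graphs with several vertices over $\infty$. Matching the explicit weights of the latter with the completed-cycle coefficients $c_\lambda$ is the technical heart of the argument and relies on a nontrivial Hodge-integral identity over $\overline{\mathcal{M}}_{g,n}$. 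An equivalent route is to degenerate the target $\bp^1$ along a chain, use the degeneration formula to split the virtual class, and isolate the pure-cycle contributions at the marked points from propagating ramification across the nodes.
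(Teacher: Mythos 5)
A preliminary remark: the paper does not prove this theorem --- it is quoted verbatim from Okounkov--Pandharipande \cite{OPaGro} and used only as input to Lemma~\ref{th:cycle03} and Corollary~\ref{th:N=P} --- so there is no internal proof to compare against, and I can only assess your argument on its own terms.

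There is a genuine gap, and it sits exactly where the theorem lives. The statement concerns only the contribution of the open locus of stable maps with \emph{smooth} domain; it is the ``leading term'' of the GW/Hurwitz correspondence and neither requires nor follows from the completed-cycle machinery you assemble. What must actually be shown is: (i) on the smooth-domain locus, the stationary descendant class $\psi_i^{\mu_i-1}\,\mathrm{ev}_i^*(\omega)$, with $\omega$ represented by the point $x_i$, is supported on covers whose $i$-th marked point is a ramification point of order exactly $\mu_i$ over $x_i$ and which are otherwise unramified away from the $x_j$; and (ii) each such cover is an isolated, unobstructed point of $\overline{\mathcal{M}}_{g,n}(\bp^1,d)$ contributing with multiplicity $1/\big((\mu_i-1)!\,|\mathrm{Aut}\,f|\big)$, so that multiplying by $\prod_i(\mu_i-1)!$ recovers $P_{g,n}(\mu)$. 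This is a local computation of the $\psi$-class on the space of branched covers (the ``nonsingular contribution'' proposition in \cite{OPaGro}), and it is precisely what your sentence ``the pure $(\mu_i,1,\ldots,1)$ contribution is supplied solely by smooth covers'' takes for granted. As written, that step is circular: you identify the smooth-domain part of the completed-cycle expansion by invoking the very statement to be proven.

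The part of the argument you do develop --- matching the completed-cycle corrections $c_\lambda$ to the non-smooth contributions via localization or target degeneration --- is the hard half of the \emph{full} correspondence, is not needed for the theorem as stated, and is in any case left open (you defer its ``technical heart,'' the Hodge-integral identity, without proof). Two smaller points: the non-smooth contributions in the Okounkov--Pandharipande framework arise from domain components \emph{contracted} by the map (ghost components carrying marked points), not from covers acquiring nodes over $x_i$ with ``coalescing'' ramification profiles, so the geometric picture motivating your matching is not the right one; and the claimed parity constraint on the partitions appearing in $\overline{(k)}$ should be checked against \cite{OPaGro} rather than asserted, though nothing in the theorem depends on it.
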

In other words, Gromov-Witten invariants compactify the Hurwitz count by allowing stable domains.

\begin{lemma}  \label{th:cycle03}
$P_{0,3}(\mu_1,\mu_2,\mu_3)=1$
\end{lemma}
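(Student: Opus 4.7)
The plan is to deduce the lemma from the Okounkov--Pandharipande theorem recalled immediately above. The key observation is that $\overline{\modm}_{0,3}$ is a single point, so every stable map in $\overline{\modm}_{0,3}(\bp^1, d)$ automatically has a smooth domain and there are no nodal boundary contributions to subtract. The Okounkov--Pandharipande theorem therefore specialises to the exact identity
\[
P_{0,3}(\mu_1,\mu_2,\mu_3) \;=\; \prod_{i=1}^{3}(\mu_i-1)!\cdot\Bigl\langle \tau_{\mu_1-1}(\omega)\,\tau_{\mu_2-1}(\omega)\,\tau_{\mu_3-1}(\omega) \Bigr\rangle^{0}_{d},
\]
where $d=(\mu_1+\mu_2+\mu_3-1)/2$ is determined by Riemann--Hurwitz.

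It therefore suffices to prove the classical three-point stationary descendant evaluation
\[
\Bigl\langle \prod_{i=1}^{3}\tau_{k_i}(\omega)\Bigr\rangle^{0}_{d} \;=\; \prod_{i=1}^{3}\frac{1}{k_i!}, \qquad k_1+k_2+k_3=2d-2,
\]
since substituting $k_i=\mu_i-1$ into the first display then yields $P_{0,3}(\mu)=1$ directly. I would prove this evaluation via the Eynard--Orantin recursion applied to the spectral curve $(x,y)=(z+1/z,\ln z)$ of equation~\eqref{eq:lnz}, which by the Dubrovin--Orantin--Safnuk--Shadrin and Norbury--Scott theorem stated just before the lemma encodes the stationary Gromov--Witten theory of $\bp^1$. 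The three-point differential $\omega_{0,3}$ for this curve is given explicitly by the initial data of the recursion at the two branch points $z=\pm 1$ of $x=z+1/z$, and the coefficient of $\prod_i x_i^{-\mu_i-1}\,\dd x_i$ can be extracted by a residue computation directly analogous to the ones used in the proofs of Propositions~\ref{pro:expansion} and~\ref{th:expansionorb}. A secondary simplification is that the divisor equation on $\bp^1$ trivialises in this setting because $\omega^2=0$, so any $\tau_0(\omega)$ insertion can be removed at the cost of multiplying by~$d$, reducing the general case to the all-$k_i\geq 1$ case of a 3-point invariant.

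The main obstacle is this final residue extraction: although routine and of the same character as coefficient computations already carried out in this paper, it requires careful bookkeeping of the expansions of $\omega_{0,3}$ at the two ramification points and in the coordinate $z$ around $0$ that produces the $z_i^{\mu_i-1}\,\dd z_i$ series. Once that computation is in hand, substitution into the first display immediately delivers $P_{0,3}(\mu_1,\mu_2,\mu_3)=1$.
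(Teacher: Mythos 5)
Your argument has a fatal gap at the very first step. The identity
\[
P_{0,3}(\mu_1,\mu_2,\mu_3)=\prod_{i=1}^{3}(\mu_i-1)!\,\Bigl\langle \tau_{\mu_1-1}(\omega)\tau_{\mu_2-1}(\omega)\tau_{\mu_3-1}(\omega)\Bigr\rangle^{0}_{d}
\]
is not what the Okounkov--Pandharipande theorem gives, and it is false. The theorem says the cycle Hurwitz numbers account only for the \emph{smooth-domain} stable maps contributing to the stationary descendant invariant; the remaining contributions live in $\overline{\modm}_{0,3}(\bp^1,d)$, the moduli space of stable \emph{maps}, which for $d\geq 2$ certainly contains maps with nodal domains (chains of rational curves). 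Your justification conflates this space with $\overline{\modm}_{0,3}$, which is indeed a point but is not the relevant space. In the completed-cycles language, the corrections $\overline{(k+1)}-(k+1)$ do not vanish here. A concrete counterexample: take $(\mu_1,\mu_2,\mu_3)=(3,1,1)$, so $d=2$. The paper itself records $P_{0,3}(2d-1,1,1)=0$ for $d>1$, so the left side is $0$; on the right, the divisor equation (using $\omega^2=0$) and the one-point evaluation $\langle\tau_{2d-2}(\omega)\rangle^0_d=1/(d!)^2$ give $\langle\tau_2(\omega)\tau_0(\omega)^2\rangle^0_2=2\cdot 2\cdot\tfrac14=1$, so the right side is $2!\cdot 1=2$. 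The same example refutes your proposed evaluation $\langle\prod\tau_{k_i}(\omega)\rangle^0_d=\prod 1/k_i!$, which would give $1/2$ instead of $1$. More structurally: your route would prove $P_{0,3}\equiv 1$ for all $\mu$ with $|\mu|$ odd, whereas the lemma secretly requires the triangle inequalities $\mu_i\leq d$ (this is exactly the content of Corollary~\ref{th:N=P} and the remark following it); no step of your argument ever uses such a hypothesis, which is a sign that it cannot be correct.

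The paper's proof is entirely different and much more elementary: it rewrites $P_{0,3}(\mu)$ as $\tfrac{1}{d!}$ times the number of transitive factorisations $\sigma_1\sigma_2\sigma_3=1$ in $S_d$ with $\sigma_i$ of cycle type $(\mu_i,1,\ldots,1)$, and then counts these factorisations directly, showing there are exactly $d!$ of them (the key observations being that exactly one symbol is common to all three long cycles and that the order of the shared symbols in $\sigma_2$ determines their order in $\sigma_3$). If you want to salvage a Gromov--Witten route, you would need to either control the completed-cycle corrections explicitly or prove the needed three-point evaluation under the triangle-inequality hypothesis --- and in addition you have left the residue extraction, which you acknowledge is the technical heart of your plan, entirely uncarried out.
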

\begin{proof}
Denote by $C_{\mu_1}\subset S_d$ the conjugacy class in the symmetric group consisting of all permutations with cycle structure $(\mu_1,1,1...,1)$.  
The lemma is equivalent to the statement 
\begin{equation}  \label{fact}
\#\{(\sigma_1,\sigma_2,\sigma_3)\mid \sigma_i\in C_{\mu_i},\ \sigma_1\cdot\sigma_2\cdot\sigma_3=1\text{ is a transitive factorisation}\}=d!.
\end{equation}
To get the Hurwitz number we divide \eqref{fact} by $d!$ corresponding to identifying equivalent products
\[(\sigma_1,\sigma_2,\sigma_3)\sim(g\sigma_1g^{-1},g\sigma_2g^{-1},g\sigma_3g^{-1}),\quad g\in S_d\]  
or equivalently isomorphic branched covers.  If a product is fixed by conjugation then this defines an automorphism of the branched cover.

It remains to prove \eqref{fact}.  By the Riemann-Hurwitz formula the degree $d$ of the cover satisfies $\mu_1+\mu_2+\mu_3=2d+1$.  

We begin with an example.  Suppose $(\mu_1,\mu_2,\mu_3)=(d,d,1)$.  Then there is a unique cover with two totally ramified points,  and it has automorphism group of size $d$, leading to $1/d$.  Equivalently the number of (transitive) factorisations $\sigma_1\sigma_2=(1)$ is $(d-1)!$.  An extra factor of $d$ comes from the choice of the third point corresponding to $\sigma_3=1$---there are $d$ such choices---or equivalently the third point  makes the automorphism group trivial.  

More generally, identify $\sigma_i$ with its cycle of length $\mu_i$ (and if $\mu_i=1$ choose a single number to represent $(1)$ or ignore this case.)   In order that the factorisation is transitive and has product $(1)$, there is exactly one number common to all three cycles which we suppose to be 1.  Also suppose $\sigma_1=(1...\mu_1)$.  The numbers $\{2,...,a\}$ appear in exactly one of $\sigma_2$ and $\sigma_3$ and their location is uniquely determined.  Also, $\sigma_2$ and $\sigma_3$ both contain the numbers $\{1,a+1,a+2,...,d\}$ and the order of these numbers in $\sigma_2$ determines their order in $\sigma_3$.  Hence the number of transitive factorisations is
$$ \binom{d}{a}\cdot(a-1)!\cdot a\cdot (d-a)!=d!$$
where the factor $\binom{d}{a}$ chooses the elements of $\sigma_1$, the factor $(a-1)!$ chooses the cycle $\sigma_1$, the factor $a$ chooses the number common to all three factors and the factor $(d-a)!$ chooses the order of $\{a+1,a+2,...,d\}$ in $\sigma_2$.
\end{proof}

\begin{corollary}  \label{th:N=P}
If $\mu_1$, $\mu_2$ and $\mu_3$ satisfy the triangle inequalities then
$N_{0,3}(\mu_1,\mu_2,\mu_3)=P_{0,3}(\mu_1,\mu_2,\mu_3).$
\end{corollary}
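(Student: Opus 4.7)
The plan is to compute both $N_{0,3}$ and $P_{0,3}$ in closed form and compare them in the range where the triangle inequalities hold.

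For $P_{0,3}$, the Riemann--Hurwitz formula forces $\mu_1+\mu_2+\mu_3=2d+1$, so the cycle Hurwitz count vanishes whenever this sum is even. When the sum is odd, the triangle inequalities $\mu_i\leq\mu_j+\mu_k$ are exactly the condition that each $\mu_i\leq d$, so cycles of length $\mu_i$ fit inside $S_d$ and the problem is non-vacuous. In this regime, Lemma~\ref{th:cycle03} gives $P_{0,3}(\mu_1,\mu_2,\mu_3)=1$.

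For $N_{0,3}$, I would evaluate $\omega^0_3$ on the spectral curve $C$ of~\eqref{eq:lnz} via the Eynard--Orantin residue formula for the base case,
\[
\omega^0_3(z_1,z_2,z_3)=\sum_{\alpha=\pm 1}\mathop{\text{Res}}_{z=\alpha}\frac{B(z_1,z)\,B(z_2,z)\,B(z_3,z)}{\dd x(z)\,\dd y(z)},
\]
where $B(z,w)=\dd z\,\dd w/(z-w)^2$ is the Bergmann kernel on $\bp^1$ and $\alpha=\pm 1$ are the ramification points of $x=z+1/z$. Since $\dd x\cdot \dd y=(z-1)(z+1)\,z^{-3}\,\dd z^{2}$, each residue is elementary and yields
\[
\omega^0_3=\tfrac{1}{2}\frac{\dd z_1\,\dd z_2\,\dd z_3}{(z_1-1)^2(z_2-1)^2(z_3-1)^2}+\tfrac{1}{2}\frac{\dd z_1\,\dd z_2\,\dd z_3}{(z_1+1)^2(z_2+1)^2(z_3+1)^2}.
\]
Expanding $(1\mp z_i)^{-2}=\sum_{\mu\geq 1}\mu(\pm 1)^{\mu-1}z_i^{\mu-1}$ and reading off the coefficient of $\prod_i\mu_i z_i^{\mu_i-1}\,\dd z_i$ gives
\[
N_{0,3}(\mu_1,\mu_2,\mu_3)=\tfrac{1}{2}\bigl(1+(-1)^{\mu_1+\mu_2+\mu_3-3}\bigr),
\]
which equals $1$ when $\mu_1+\mu_2+\mu_3$ is odd and $0$ when it is even, matching the tabulated entries.

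Combining the two computations, in the triangle-inequality regime both $N_{0,3}$ and $P_{0,3}$ take value $1$ when $\mu_1+\mu_2+\mu_3$ is odd and $0$ when it is even, proving the corollary. The main subtlety is not computational but conceptual: one must verify that the parity constraint coming from Riemann--Hurwitz on the Hurwitz side automatically agrees with the parity-based vanishing of $N_{0,3}$ on the spectral-curve side, and that the triangle inequalities are the exact condition needed for the Hurwitz count to match its generic value. Apart from that, only two elementary residues and a geometric-series expansion are required.
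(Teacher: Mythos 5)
Your proof is correct and follows essentially the same route as the paper: both arguments reduce to identifying the triangle inequalities with the condition $\mu_i\le d$ (given $\mu_1+\mu_2+\mu_3=2d+1$) and then matching the parity conditions under which $N_{0,3}$ and $P_{0,3}$ both equal $1$, invoking Lemma~\ref{th:cycle03} for the latter. The only difference is that you derive $N_{0,3}=\tfrac{1}{2}\bigl(1+(-1)^{\mu_1+\mu_2+\mu_3-3}\bigr)$ explicitly from the Eynard--Orantin residues at $z=\pm1$ (a computation that checks out, since $\dd x\,\dd y=(z^2-1)z^{-3}\,\dd z^2$ and only the value of $\dd y$ at the ramification points enters, so the truncation of $y=\ln z$ is immaterial here), whereas the paper simply quotes the known values of $N_{0,3}$ from its table.
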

\begin{proof}
We know that $N_{0,3}(\mu_1,\mu_2,\mu_3)=1$ iff $\sum\mu_i$ is odd, so the point of this corollary is simply to identify this appearance of 1 with the appearance of 1 in Lemma~\ref{th:cycle03}.  The triple $(\mu_1,\mu_2,\mu_3)$ appears in a Hurwitz problem if their sum is odd, since $\mu_1+\mu_2+\mu_3=2d+1$, and if $\mu_i\leq d$, $i=1,2,3$.  But 
$$\mu_1\leq d\Leftrightarrow -\mu_1+\mu_2+\mu_3>0$$
which is one of the three triangle inequalities.  The other two triangle inequalities hold by the same argument.
\end{proof}
In general $N_{g,n}(\mu_1,...,\mu_n)\neq P_{g,n}(\mu_1,...,\mu_n)$.  For example, $N_{0,3}(2d-1,1,1)=1$ whereas $P_{0,3}(2d-1,1,1)=0$ for $d>1$.  Nevertheless, Corollary~\ref{th:N=P} suggests that $P_{g,n}(\mu_1,...,\mu_n)$ may equal $N_{g,n}(\mu_1,...,\mu_n)$ under conditions on $(\mu_1,...,\mu_n)$ and more generally $N_{g,n}(\mu_1,...,\mu_n)$ may be realised as the solution to a generalisation of the cycle Hurwitz problem involving stable curves for domains.

\end{document}